\crefname{equation}{}{}
\crefname{figure}{Figure}{Figures}
\crefname{assumption}{Assumption}{Assumptions}
\crefname{condition}{Condition}{Conditions}
\crefname{fact}{Fact}{Facts}
\renewcommand\th{\textsuperscript{th}\xspace}
\newcommand\tsup[2][2]{%
	\def\useanchorwidth{T}%
	\ifnum#1>1%
	\stackon[-.5pt]{\tsup[\numexpr#1-1\relax]{#2}}{\scriptscriptstyle\sim}%
	\else%
	\stackon[.5pt]{#2}{\scriptscriptstyle\sim}%
	\fi%
}
\definecolor{forestgreen}{rgb}{0.13, 0.55, 0.13}
\definecolor{amber}{rgb}{1.0, 0.75, 0.0}
\definecolor{bananayellow}{rgb}{.8, 0.6, 0}
\definecolor{uqp}{RGB}{152,24,147} 
\definecolor{oxb}{RGB}{0,33,71}
\newcounter{comment}\setcounter{comment}{0}
\newmdtheoremenv[%
linewidth = 1pt,%
roundcorner = 10pt,%
leftmargin = 0,%
rightmargin = 0,%
backgroundcolor = green!3,%
outerlinecolor = blue!70!black,%
splittopskip = \topskip,%
ntheorem = true,%
]{theorem}{Theorem}
\newmdtheoremenv[%
linewidth = 1pt,%
roundcorner = 10pt,%
leftmargin = 0,%
rightmargin = 0,%
backgroundcolor = green!3,%
outerlinecolor = blue!70!black,%
splittopskip = \topskip,%
ntheorem = true,%
]{corollary}{Corollary}
\newmdtheoremenv[%
linewidth = 1pt,%
roundcorner = 10pt,%
leftmargin = 0,%
rightmargin = 0,%
backgroundcolor = green!3,%
outerlinecolor = blue!70!black,%
splittopskip = \topskip,%
ntheorem = true,%
]{lemma}{Lemma}
\newmdtheoremenv[%
linewidth = 1pt,%
roundcorner = 10pt,%
leftmargin = 0,%
rightmargin = 0,%
backgroundcolor = green!3,%
outerlinecolor = blue!70!black,%
splittopskip = \topskip,%
ntheorem = true,%
]{fact}{Fact}
\newmdtheoremenv[%
linewidth = 1pt,%
roundcorner = 10pt,%
leftmargin = 0,%
rightmargin = 0,%
backgroundcolor = green!3,%
outerlinecolor = blue!70!black,%
splittopskip = \topskip,%
ntheorem = true,%
]{proposition}{Proposition}
\newmdtheoremenv[%
linewidth = 1pt,%
roundcorner = 10pt,%
leftmargin = 0,%
rightmargin = 0,%
backgroundcolor = blue!3,%
outerlinecolor = blue!70!black,%
splittopskip = \topskip,%
ntheorem = true,%
]{definition}{Definition}
\newmdtheoremenv[%
linewidth = 1pt,%
roundcorner = 10pt,%
leftmargin = 0,%
rightmargin = 0,%
backgroundcolor = yellow!3,%
outerlinecolor = blue!70!black,%
splittopskip = \topskip,%
ntheorem = true,%
]{condition}{Condition}
\newmdtheoremenv[%
linewidth = 1pt,%
roundcorner = 10pt,%
leftmargin = 0,%
rightmargin = 0,%
backgroundcolor = yellow!3,%
outerlinecolor = blue!70!black,%
splittopskip = \topskip,%
ntheorem = true,%
]{assumption}{Assumption}
\theoremstyle{definition}
\newmdtheoremenv[%
linewidth = 1pt,%
roundcorner = 10pt,%
leftmargin = 0,%
rightmargin = 0,%
backgroundcolor = cyan!3,%
outerlinecolor = blue!70!black,%
splittopskip = \topskip,%
ntheorem = true,%
]{example}{Example}
\theoremstyle{definition}
\newmdtheoremenv[%
linewidth = 1pt,%
roundcorner = 10pt,%
leftmargin = 0,%
rightmargin = 0,%
backgroundcolor = red!3,%
outerlinecolor = blue!70!black,%
splittopskip = \topskip,%
ntheorem = true,%
]{remark}{Remark}
\NewDocumentCommand\DownArrow{O{2.0ex} O{black}}{%
	\mathrel{\tikz[baseline] \draw [<-, line width=0.5pt, #2] (0,0) -- ++(0,#1);}
}
\definecolor{mygreen}{rgb}{0,0.6,0}
\definecolor{mygray}{rgb}{0.5,0.5,0.5}
\definecolor{mymauve}{rgb}{0.58,0,0.82}
\tiny\color{mygray}, 
\newcommand{\defeq}{\triangleq}
\definecolor{purplep}{RGB}{152,24,147}
\newcommand{\cpp}[1]{{\color{purplep}{#1}}}
\newcommand{\real}{\mathbb{R}}
\newcommand{\comp}{\mathbb{C}}
\newcommand{\herm}{\mathbb{H}}
\newcommand{\reals}{\mathbb{R}}
\newcommand{\symm}{\mathbb{S}}
\newcommand{\T}{\intercal}
\renewcommand{\H}{*}
\newcommand{\N}{\natural}
\DeclareMathOperator*{\argmin}{arg\,min}
\newcommand {\dotprod}[1]{\langle #1 \rangle}
\newcommand {\vnorm}[1]{\| #1 \|}
\newcommand {\abs}[1]{| #1 |}
\newcommand {\bigO}[1]{\mathcal O(#1)}
\newcommand {\range}  {\textnormal{Range}}
\newcommand {\Null}  {\textnormal{Null}}
\newcommand {\Span}  {\textnormal{Span}}
\newcommand {\vect}  {\textnormal{vec}}
\newcommand {\zero}   {\mathbf{0}}
\newcommand{\sA}{\mathcal{A}}
\renewcommand {\AA}  {\mathbf{A}}
\newcommand {\AAA}  {\mathbf{A}}
\newcommand {\AAT}  {\AA^{\T}}
\newcommand {\bAA}  {\bar{\AAA}}
\newcommand {\tAA}  {\tilde{\AAA}}
\newcommand {\tAAd}  {{\tilde{\AAA}}^{\dagger}}
\newcommand {\AAd}  {\AAA^{\dagger}}
\newcommand {\AAH}  {\AAA^{\H}}
\newcommand {\BB}  {\mathbf{B}}
\newcommand {\bb}   {\mathbf{b}}
\newcommand {\bbb}  {\bar{\bb}}
\newcommand {\tbb}  {\tilde{\bb}}
\newcommand {\CC}   {\mathbf{C}}
\newcommand {\hCC}  {\hat{\CC}}
\newcommand {\bc}  {\bar{c}}
\newcommand {\DD}  {\mathbf{D}}
\newcommand {\dd}   {\mathbf{d}}
\newcommand {\ddt}  {{\dd}_{t}}
\newcommand {\tdd}  {\tilde{\dd}}
\newcommand {\tddt}  {\tilde{\dd}_{t}}
\newcommand {\cdd}  {\check{\dd}}
\newcommand {\cddt}  {\check{\dd}_{t}}
\newcommand {\dn}   {d}
\newcommand {\EE}  {\mathbf{E}}
\newcommand {\ee}   {\mathbf{e}}
\newcommand {\tg}  {\tilde{g}}
\newcommand {\eye}  {\mathbf{I}}
\newcommand {\KK}  {\mathbf{K}}
\newcommand {\KKH}  {\KK^{\H}}
\newcommand {\Kt}[1]{\mathcal{K}_{t}(#1)}
\newcommand {\Ktn}[1]{\mathcal{K}_{t+1}(#1)}
\newcommand {\Kg}[1]{\mathcal{K}_{g}(#1)}
\newcommand {\MM}  {\mathbf{M}}
\newcommand {\MMs}  {\MM^{1/2}}
\newcommand {\MMsin}  {\MM^{-1/2}}
\newcommand {\MMd}  {\MM^{\dagger}}
\newcommand {\bMM}  {\bar{\MM}}
\newcommand {\PP}  {\mathbf{P}}
\newcommand {\PPT}  {\PP^{\T}}
\newcommand {\PPH}  {\PP^{\H}}
\newcommand {\bPP}  {\bar{\PP}}
\newcommand {\pp}   {\mathbf{p}}
\newcommand {\QQ}  {\mathbf{Q}}
\newcommand {\QQt}  {\QQ_{t}}
\newcommand {\QQtT}  {\QQ_{t}^{\T}}
\newcommand {\qq}   {{\mathbf{q}}}
\newcommand {\qqt}  {\qq_{t}}
\newcommand {\RR}  {\mathbf{R}}
\newcommand {\RRt}  {\RR_{t}}
\newcommand {\hRR}  {\hat{\RR}}
\newcommand {\hRRt}  {\hat{\RR}_{t}}
\newcommand {\rr}   {\mathbf{r}}
\newcommand {\rrt}  {{\rr}_{t}}
\newcommand {\rrtH}  {{\rr}_{t}^{\H}}
\newcommand {\rrtp}  {{\rr}_{t-1}}
\newcommand {\trr}  {\tilde{\rr}}
\newcommand {\trrt}  {\tilde{\rr}_{t}}
\newcommand {\rrg}  {\rr_{g}}
\newcommand {\rrd}  {\rr^{+}}
\newcommand {\rrtg}  {\rr_{\tg}}
\newcommand {\trrtg}  {\tilde{\rr}_{\tg}}
\newcommand {\brrt}  {\bar{\rr}_{t}}
\newcommand {\brrg}  {\bar{\rr}_{g}}
\newcommand {\hrr}  {\hat{\rr}}
\newcommand {\hrrt}  {\hrr_{t}}
\newcommand {\hrrtp}  {\hrr_{t-1}}
\newcommand {\hrrg}  {\hrr_{g}}
\newcommand {\hrrtg}  {\hrr_{\tg}}
\newcommand {\crr}  {\check{\rr}}
\newcommand {\crrt}  {\crr_{t}}
\newcommand {\crrtp}  {\crr_{t-1}}
\newcommand {\crrtg}  {\crr_{\tg}}
\newcommand {\cjl}[1]{\textnormal{Conj}\left(#1\right)}
\newcommand {\cj}[1]{\textnormal{Conj}(#1)}
\newcommand {\rg}[1]{\range(#1)}
\renewcommand {\SS}  {\mathbf{S}}
\newcommand {\SSd}  {\SS^{\dagger}}
\newcommand {\bSS}  {\bar{\SS}}
\newcommand {\SST}  {\SS^{\T}}
\newcommand {\SSdH}  {[\SSd]^{\H}}
\newcommand {\SSH}  {\SS^{\H}}
\newcommand {\St}[1]{\mathcal{S}_{t}(#1)}
\newcommand {\TT}  {\mathbf{T}}
\newcommand {\TTt}  {\mathbf{T}_{t}}
\newcommand {\tTTt}  {\tilde{\mathbf{T}}_{t}}
\newcommand {\hTT}  {\hat{\TT}}
\newcommand {\hTTt}  {\hat{\TT}_{t}}
\newcommand {\ttt}  {\mathbf{t}}
\newcommand {\UU}  {\mathbf{U}}
\newcommand {\UUT}  {\UU^{\intercal}}
\newcommand {\UUp}  {\UU_{\perp}}
\newcommand {\bUU}  {\bar{\UU}}
\newcommand {\bUUp}  {\bar{\UU}_{\perp}}
\newcommand {\uu}   {\mathbf{u}}
\newcommand {\VV}  {\mathbf{V}}
\newcommand {\VVt}  {\VV_{t}}
\newcommand {\VVtH}  {\VV_{t}^{\H}}
\newcommand {\VVtn}  {\VV_{t+1}}
\newcommand {\bVV}  {\bar{\VV}}
\newcommand {\bVVt}  {\bar{\VV}_{t}}
\newcommand {\vv}   {\mathbf{v}}
\newcommand {\vvt}  {{\vv}_{t}}
\newcommand {\vvtn}  {{\vv}_{t+1}}
\newcommand {\bvv}  {\bar{\vv}}
\newcommand {\tvv}  {\tilde{\vv}}
\newcommand {\tvvt}  {\tilde{\vv}_{t}}
\newcommand {\tvvtn}  {\tilde{\vv}_{t+1}}
\newcommand {\ww}   {\mathbf{w}}
\newcommand {\wwt}  {{\ww}_{t}}
\newcommand {\wwtn}  {\ww_{t+1}}
\newcommand {\bww}  {\bar{\ww}}
\newcommand {\bwwt}  {\bar{\ww}_{t}}
\newcommand {\bwwtn}  {\bar{\ww}_{t+1}}
\newcommand {\XX}   {\mathbf{X}}
\newcommand {\tXX}  {\tilde{\XX}}
\newcommand {\xx}   {\mathbf{x}}
\newcommand {\xxt}  {{\xx}_{t}}
\newcommand {\xxtn}  {\xx_{t+1}}
\newcommand {\txx}  {\tilde{\xx}}
\newcommand {\txxt}  {\tilde{\xx}_{t}}
\newcommand {\txxtn}  {\tilde{\xx}_{t+1}}
\newcommand {\xxo}  {{\xx}_{0}}
\newcommand {\xxd}  {\xx^{+}}
\newcommand {\hxx}  {\hat{\xx}}
\newcommand {\hxxd}  {\hxx^{+}}
\newcommand {\cxx}  {\check{\xx}}
\newcommand {\cxxt}  {\check{\xx}_{t}}
\newcommand {\bxx}  {\bar{\xx}}
\newcommand {\hxxtN}  {\hat{\xx}_{t}^{\N}}
\newcommand {\xxg}  {\xx_{g}}
\newcommand {\xxtg}  {\xx_{\tg}}
\newcommand {\txxtg}  {\tilde{\xx}_{\tg}}
\newcommand {\txxd}  {\tilde{\xx}^{+}}
\newcommand {\yy}   {\mathbf{y}}
\newcommand {\ZZ}  {\mathbf{Z}}
\newcommand {\zz}  {\mathbf{z}}
\newcommand {\zzt}  {\zz_{t}}
\newcommand {\bzz}  {\bar{\zz}}
\newcommand {\bzzt}  {\bar{\zz}_{t}}
\newcommand {\bzztn}  {\bar{\zz}_{t+1}}
\newcommand {\zztn}  {\zz_{t+1}}
\newcommand {\alphat}  {\alpha_{t}}
\newcommand {\betat}  {\beta_{t}}
\newcommand {\betatn}  {\beta_{t+1}}
\newcommand {\SIGMA}  {\mathbf{\Sigma}}
\newcommand {\taut}  {{\tau}_{t}}
\newcommand {\sds}[2]{{\mathcal{S}_{#1}} (#2)}
\newcommand {\kryl}[2]{{\mathcal{K}_{#1}} (#2)}
\newcommand {\krylovt}[1]{ {\mathcal{K}_{t}} (\ensuremath{#1}) }
\begin{document}
	
	\title{Obtaining Pseudo-inverse Solutions With MINRES}
	
	\author{Yang Liu\thanks{Mathematical Institute, University of Oxford. Email: \tt{yang.liu@maths.ox.ac.uk}}
		\and
		Andre Milzarek\thanks{
			School of Data Science, The Chinese University of Hong Kong, Shenzhen, (CUHK-Shenzhen), China. Email: \tt{andremilzarek@cuhk.edu.cn}}
		\and
		Fred Roosta\thanks{School of Mathematics and Physics, University of Queensland, Australia, and International Computer Science Institute, Berkeley, USA. Email: \tt{fred.roosta@uq.edu.au}}
	}
	
	\date{\today}
	\maketitle
	
	\abstract{The celebrated minimum residual method (MINRES), proposed in the seminal paper of Paige and Saunders \cite{paige1975solution}, has seen great success and widespread use in solving Hermitian (and complex-symmetric) systems $ \AA \xx = \bb $. Unless the system is consistent, MINRES is not guaranteed to obtain the pseudo-inverse solution. We propose a novel and remarkably simple minimum-norm refinement (MN refinement) that seamlessly integrates with the final MINRES iteration, enabling us to obtain the minimum-norm solution with negligible additional computational cost. We extend our MN refinement to complex-symmetric systems, building on S.-C. Choi's extension of MINRES for solving these systems. Given the flexibility of MINRES to accommodate singular preconditioners, we further investigate the MN refinement in preconditioned settings that involve singular preconditioners.
		We also provide numerical experiments to support our analysis and showcase the effects of our MN refinement.
	}

	\section{Introduction}
\label{sec:intro}

We consider the linear least-squares problem,
\begin{align}
	\label{eq:least_squares}
	\min_{\xx \in \comp^{\dn}} \vnorm{\bb - \AA \xx}^2,
\end{align}
involving a complex-valued matrix $ \AA \in \comp^{\dn \times \dn} $ that is either Hermitian or complex-symmetric, and a right-hand side complex vector $ \bb \in \comp^{\dn} $. Our primary focus is on cases where the system is inconsistent, i.e., $ \bb \notin \rg{\AA} $, which arise in many situations such as optimization of non-convex problems \cite{roosta2018newton,liu2022newtonmr}, numerical solution of partial differential equations \cite{kaasschieter1988preconditioned}, and low-rank matrix computations \cite{gallivan1996high}. Even if the system is theoretically consistent, due to errors from measurements, discretizations, truncations, or round-off, $\AA$ can become numerically singular, resulting in a numerically incompatible system \cite{choi2013minimal}.
In these situations, problem \cref{eq:least_squares} admits infinitely many solutions. Among all of these solutions, the one with smallest Euclidean norm is commonly referred to as the pseudo-inverse solution, also known as the minimum-norm solution, and is given by  $ \AAd \bb $, where $ \AAd $ is the Moore-Penrose pseudo-inverse of $ \AA $. This particular solution, among all other alternatives, holds a special place and offers numerous practical and theoretical advantages in various applications including, e.g., linear matrix equations \cite{engl1981new}, optimization \cite{roosta2018newton,liu2021convergence}, and statistical linear regression \cite{derezinski2020exact,dar2022double}.

When $ d $ is small, one can obtain $ \AAd \bb $ by explicitly computing $ \AAd $ via direct methods, e.g., \cite{courrieu2005fast,katsikis2011improved,bjorck2015numerical,bjorck1996numerical,klema1980singular}. Among them, the best known direct method to obtain $ \AAd $ is via the singular value decomposition \cite{klema1980singular}. However, rather than explicitly computing $ \AAd $ for large-scale problems, say when $d \gg 1000$, it is more appropriate, if not indispensable, to obtain the pseudo-inverse solution $ \AAd \bb $ by an iterative scheme.
%
LSQR \cite{paige1982lsqr} and LSMR \cite{fong2011lsmr} are iterative Krylov subspace methods that can recover the pseudo-inverse solution $ \AAd \bb $ for any rectangular $ \AA \in \comp^{m \times \dn} $ (in contrast to MINRES). However, they are mathematically equivalent to the conjugate gradient method (CG) \cite{hestenes1952methods} and the minimum residual method (MINRES) \cite{paige1975solution}, respectively, applied to the normal equation of \cref{eq:least_squares}. Consequently, each iteration of LSQR and LSMR necessitates two matrix-vector product evaluations, which can be computationally challenging when $d \gg 1$.\footnote{A new solver, M\textsc{in}A\textsc{res}, which minimizes $\| \AA \rr \|$ in each Krylov subspace and requires only one matrix-vector product per iteration, has recently been proposed~\cite{montoison2025minares}.} While CG iterations are not well-defined unless $ \AA \succeq \zero $ and $ \bb \in \rg{\AA} $, MINRES can effectively solve \cref{eq:least_squares} even for systems that are indefinite or singular. Nonetheless, MINRES is not guaranteed to obtain $ \AAd \bb $ unless $ \bb \in \rg{\AA} $ \cite[Theorem 3.1]{choi2011minres}. To address this limitation, a variant of MINRES, called MINRES-QLP was introduced in \cite{choi2011minres,choi2014algorithm}. MINRES-QLP ensures convergence to the pseudo-inverse solution in all cases. Compared with MINRES, the rank-revealing QLP decomposition of the tridiagonal matrix from the Lanczos process in MINRES-QLP necessitates additional computations per iteration \cite{choi2011minres}. In particular, the number of floating-point operations is up to $14d$ per iteration for MINRES-QLP compared to $9d$ for MINRES. Moreover, owing to its increased complexity, the implementation of MINRES-QLP can pose greater challenges than that of MINRES.

\textbf{Contributions.} Our contributions focus on introducing a novel and remarkably simple strategy, named \emph{MN refinement}, which modifies the final MINRES iteration to obtain the minimum-norm solution with negligible additional computational overhead and minimal changes to the original MINRES algorithm.
\begin{itemize}
    \item After addressing the common case of Hermitian settings (\cref{thm:MINRES_dagger}), we build on the MINRES solver's extension to complex-symmetric systems described in \cite{choi2013minimal} and propose a corresponding MN refinement for complex-symmetric systems (\cref{thm:CSMINRES_dagger}).
    \item 
    We also investigate generalizations of the MN refinement to preconditioned settings with singular preconditioners (\cref{thm:MN-refinement,thm:MN-refinement_CS}).
	\item Numerical experiments show that the developed MN refinements effectively recover pseudo-inverse solutions for Hermitian and complex-symmetric systems. We further demonstrate applications to solving PDEs, like the Maxwell problem, and to large-scale, ill-conditioned image deblurring problems using semi-definite preconditioners.
\end{itemize}

The rest of the paper is organized as follows. We end this section by introducing our notation and definitions. 
The theoretical analyses underpinning our MN refinement in a variety of settings are given in \cref{sec:pinverse,sec:pMINRES}. In particular, \cref{sec:hermitian,sec:CSMINRES} consider Hermitian and complex-symmetric systems, respectively, while \cref{sec:pMINRES_H,sec:pMINRES_CS} study our MN refinement in the presence of positive semi-definite preconditioners in each matrix class. Numerical results are given in \cref{sec:exp}. Conclusions and further thoughts are gathered in \cref{sec:conclusion}.

\subsection*{Notation and definitions}
Throughout this paper, vectors and matrices are denoted by bold lower- and upper-case letters, respectively. Regular lower-case letters are reserved for scalars. 
The space of complex-symmetric and Hermitian $d \times d$ matrices are denoted by $ \symm^{d \times d} $  and $ \herm^{d \times d} $, respectively. 
The inner-product of complex vectors $ \xx$ and $\yy $ is defined as $ \dotprod{\xx, \yy} = \xx^{\H} \yy $, where $ \xx^\H $ represents the conjugate transpose of $ \xx $. The Euclidean norm of a vector $ \xx \in \comp^{\dn} $ is given by $ \|\xx\| $. The conjugate of a vector or a matrix is denoted by $ \bbb = \cj{\bb} $ or $ \bAA = \cj{\AA} $.
The zero vector and the zero matrix are denoted by $ \zero $ while the identity matrix of dimension $ t \times t $ is given by $ \eye_{t} $.
We use $ \ee_{j} $ to denote the $ j\th $ column of the identity matrix. We use $ \MM \succeq\,(\succ)\,\zero$, to indicate that $ \MM $ is positive semi-definite (PSD) (positive definite (PD)). For a PSD matrix $ \MM$, we write $ \vnorm{\xx}_{\MM} = \sqrt{\dotprod{\xx,\MM \xx}} $ (this is an abuse of notation since unless $ \MM \succ \zero $, this does not imply a norm).
For any $ t\geq 1 $, the set $\mathcal{K}_{t}(\AA, \bb) = \Span\left\{\bb,\AA\bb,\ldots,\AA^{t-1}\bb\right\}$ denotes the Krylov subspace of degree $ t $ generated using $ \AA $ and $ \bb $.
The residual vector at $ t\th $ iteration $ \xxt $ is denoted by $ \rr_{t} = \bb - \AA \xx_{t} $.

Although we focus on square matrices $\AA$ in this paper, let us recall the standard definition of the Moore-Penrose pseudo-inverse for general $ \AA $.
\begin{definition}
	\label{def:pinverse}
	For given $ \AA \in \comp^{\dn \times m} $, the (Moore-Penrose) pseudo-inverse is the matrix $ \BB \in \comp^{m \times \dn} $ satisfying the Moore-Penrose conditions, namely  
	\begin{subequations}     
		\label{eq:pi}   
		\begin{align}
			\AA \BB \AA &= \AA, \label{eq:pi_1} \\
			\BB \AA \BB &= \BB, \label{eq:pi_2} \\
			(\AA \BB)^{\H} &= \AA \BB, \label{eq:pi_3} \\
			(\BB \AA)^{\H} &= \BB \AA. \label{eq:pi_4}
		\end{align}
	\end{subequations}
	The matrix $\BB$ is unique and is denoted by $ \AAd $. The point $ \xxd \defeq \AAd \bb $ is said to be the (Moore-Penrose) pseudo-inverse solution of \cref{eq:least_squares}.
\end{definition}

The interplay between $\AA$ and $\bb$ is a critical factor in the convergence of MINRES and many other iterative solvers. This interplay is encapsulated in the notion of the grade of $ \bb $ with respect to $ \AA $. 

\begin{definition}
	\label{def:grade_b}
	The grade of $ \bb \in \comp^{\dn} $ with respect to $ \AA \in \comp^{\dn \times \dn} $ is the positive integer $ g(\AA, \bb) $ such that $\text{dim}(\mathcal{K}_{t}(\AA, \bb)) = \min\{t,g(\AA,\bb)\} $.
	In the special case where $\AA$ is complex-symmetric, i.e., $ \AAT = \AA $, we define the grade $g(\AA,\bb)$ with respect to a modified Krylov subspace, the so-called Saunders subspace, cf. \cite{choi2013minimal,saunders1988two} and \eqref{eq:saunders}, i.e., $\text{dim}(\mathcal{S}_{t}(\AA, \bb)) = \min\{t,g(\AA,\bb)\} $. 
\end{definition}

Recall that $ g(\AA, \bb) $ determines the iteration at which a Krylov subspace algorithm terminates with a solution (in exact arithmetic). 
For simplicity, for given pairs $(\AA, \bb)$ and $(\tAA, \tbb)$, we set $ g = g(\AA, \bb) $ and $ \tg = g(\tAA, \tbb) $. 
In \cref{def:grade_b}, the Saunders subspace is introduced to distinguish between Hermitian and complex-symmetric systems, where the standard Lanczos process \cite{paige1975solution} is replaced by the Saunders process, which operates in the Saunders subspace \cite{choi2013minimal,saunders1988two}; see also \cref{sec:CSMINRES,app:CSMINRES}.

\section{Pseudo-inverse solutions in MINRES}
\label{sec:pinverse}
The most common method to compute the pseudo-inverse of a matrix is via singular value decomposition (SVD). The computational cost of the SVD is $ \bigO{\dn^3} $, which essentially makes such an approach intractable in high-dimensions. In our context, we do not seek to directly compute $\AAd$ as our interest lies only in obtaining the pseudo-inverse solution, i.e., $ \xxd = \AAd \bb $. As alluded to earlier in \cref{sec:intro}, existing iterative methods that can ensure the pseudo-inverse solution of \cref{eq:least_squares} in the incompatible setting demand significantly higher computational effort per iteration compared to plain MINRES.

\begin{algorithm}[t!]
	\caption{The left column presents the MINRES algorithm for the Hermitian case, while the right column outlines the particular modifications needed for complex-symmetric settings. \\[0.5ex] \textbf{Hermitian} \hfill \textbf{\cpp{Complex-symmetric}}}
	\label{alg:MINRES}
	\begin{tikzpicture}[scale=1]
		\node at (0,0) {\begin{minipage}{.98\linewidth}
				\begin{algorithmic}[1]
					\STATE \textbf{Inputs:} $ \AA \in \herm^{\dn \times \dn} $, $ \bb \in \comp^{\dn} $, \hfill\COMMENT{$\cpp{\AAT = \AA} \in \symm^{\dn \times \dn}$}
					\vspace{1mm}
					\STATE $ \phi_0 = \beta_1 = \vnorm{\bb} $, $ \rr_{0} = \bb $, $ \vv_1 = \rr_{0} /\phi_0 $,
					\vspace{1mm}
					\STATE $ \vv_0 = \xx_0 = \dd_0 = \dd_{-1} = \zero $,
					\vspace{1mm}
					\STATE $ s_0 = 0 $, $ c_0 = -1 $, $ \delta_1 = \tau_0 = 0 $, $ t = 1 $, 
					\vspace{1mm}
					\WHILE { \text{True} }
					\vspace{1mm}
					\STATE $ \qq_{t} = \AA \vv_{t} $, $\alpha_{t} = \dotprod{\vvt, \qq_{t}} $, \hfill\COMMENT{$ \qq_{t} = \AA \cpp{\bvv_{t}}$}
					\vspace{1mm}
					\STATE $ \qq_{t} = \qq_{t} - \alpha_{t} \vv_{t} - \beta_{t} \vv_{t-1} $, $ \beta_{t+1} = \| \qq_{t} \| $, 
					\vspace{2mm}
					\STATE 
					$\displaystyle 
					\begin{bmatrix}
						\delta^{[2]}_{t} & \epsilon_{t+1} \\
						\gamma_{t} & \delta_{t+1}
					\end{bmatrix} = \begin{bmatrix}
						c_{t-1} & s_{t-1} \\
						s_{t-1} & - c_{t-1}
					\end{bmatrix} \begin{bmatrix}
						\delta_{t} & 0 \\
						\alpha_{t} & \beta_{t+1}
					\end{bmatrix}, $ \hfill\COMMENT{$\begin{bmatrix}
							\cpp{\bc_{t-1}} & s_{t-1} \\
							s_{t-1} & - c_{t-1}
						\end{bmatrix}$}
					\vspace{1mm}
					\STATE $ \gamma_{t}^{[2]} = \sqrt{\abs{\gamma_{t}}^2 + \beta_{t+1}^2} $,
					\vspace{1mm}
					\IF{ $ \gamma_{t}^{[2]} \neq 0 $ }
					\vspace{1mm}
					\STATE $ c_{t} = \gamma_{t}/\gamma_{t}^{[2]} $, $ s_{t} = \beta_{t+1} / \gamma_{t}^{[2]} $, 
					\vspace{1mm}
					\STATE $ \tau_{t} = c_{t} \phi_{t-1} $, $ \phi_{t} = s_{t} \phi_{t-1} $, \hfill\COMMENT{$ \tau_{t} = \cpp{\bc_{t}} \phi_{t-1} $}
					\vspace{1mm}
					\STATE $ \dd_{t} = (\vv_{t} - \delta^{[2]}_{t} \dd_{t-1} - \epsilon_{t} \dd_{t-2} ) / \gamma^{[2]}_{t} $, \hfill\COMMENT{$ \dd_{t} = (\cpp{\bvv_{t}} - \delta^{[2]}_{t} \dd_{t-1} - \epsilon_{t} \dd_{t-2} ) / \gamma^{[2]}_{t} $}
					\vspace{1mm}
					\STATE $ \xx_{t} = \xx_{t-1} + \tau_{t} \dd_{t} $, $ \| \rr_{t} \| = \phi_{t} $, 
					\vspace{1mm}
					\IF{$ \beta_{t+1} \neq 0 $}
					\STATE $ \vv_{t+1} = \qq_{t} / \beta_{t+1} $,
					\vspace{1mm}
					\STATE $ \rr_{t} = s_{t}^2 \rr_{t-1} - \phi_{t} c_{t} \vv_{t+1} $, \hfill\COMMENT{$ \rr_{t} = s_{t}^2 \rr_{t-1} - \phi_{t} \cpp{\bc_{t}} \vv_{t+1} $}
					\vspace{1mm}
					\ELSE
					\vspace{1mm}
					\STATE $ g = t $, \textbf{return}\;\;$ \hxxd = \xxg $.
					\vspace{1mm}
					\ENDIF
					\vspace{1mm}
					\ELSE
					\vspace{1mm}
					\STATE $ g = t $, $ c_{g} = \tau_{g} = 0 $, $ s_{g} = 1 $, $ \phi_{g} = \phi_{g-1} $, 
					\vspace{1mm}
					\STATE $ \rr_{g} = \rr_{g-1} $, $ \xx_{g} = \xx_{g-1} $, 
					\vspace{1mm}
					\STATE \textbf{return}\;\;$ \hxxd = \xxg - \dotprod{\rrg, \xxg} \rrg / \| \rrg \|^2 $. \hfill\COMMENT{$ \hxxd = \xxg - \dotprod{\cpp{\brrg}, \xxg} \cpp{\brrg} / \| \cpp{\brrg} \|^2 $}
					\vspace{1mm}
					\ENDIF
					\vspace{1mm}
					\STATE $ t \leftarrow t+1 $,
					\vspace{1mm}
					\ENDWHILE
					\vspace{1mm}
					\STATE \textbf{Outputs:} $ \hxxd = \AAd \bb $ w.r.t., \cref{eq:least_squares}.
				\end{algorithmic}
		\end{minipage}};
		\draw[densely dotted] (1,8) -- (1,-8);
	\end{tikzpicture}
\end{algorithm}

The MINRES method is shown in \cref{alg:MINRES}. Recall that for a Hermitian matrix $ \AA \in \herm^{\dn \times \dn} $ and a right-hand side vector $ \bb \in \comp^{\dn} $, the $ t\th $ iteration of MINRES for solving \cref{eq:least_squares} can be written as
\begin{align}
	\label{eq:MINRES}
	\min_{\xx \in \mathcal{K}_t(\AA, \bb)} \vnorm{\bb - \AA \xx}^2,
\end{align}
where $ t \leq g $ and the grade $ g $ is defined as in \cref{def:grade_b}; see \cite{paige1975solution,liu2022minres} for more details. When $ \bb \in \rg{\AA} $, it can be easily shown that MINRES returns the pseudo-inverse solution of \cref{eq:least_squares}, \cite{choi2011minres}. However, when $ \bb \notin \rg{\AA} $, the situation is more complicated. In such settings, as long as $ \rg{\AA} \subseteq \mathcal{K}_{g}(\AA, \bb) $, the final iterate of MINRES satisfies the properties \cref{eq:pi_1,eq:pi_2,eq:pi_3} in \cref{def:pinverse}, otherwise one can only guarantee \cref{eq:pi_2,eq:pi_3}; see, e.g., \cite[Theorem 2.27]{choi2006iterative}.
To obtain a solution that satisfies all four properties of the pseudo-inverse, Choi et al., \cite{choi2011minres}, provide an algorithm, called MINRES-QLP. However, not only is MINRES-QLP rather complicated to implement, but it also demands more computations per iteration compared to MINRES. 

Our goal in this section is to obtain the pseudo-inverse solution of \cref{eq:least_squares} with minimal additional costs and alterations to the MINRES algorithm.
Before doing so, we first restate a simple and well-known result for solutions of the least squares problem \cref{eq:least_squares}; the proof of Lemma \ref{lemma:MINRES_solve_exactly} can be found in, e.g., \cite[Page 7]{bjorck1996numerical}. 

\begin{lemma}
	\label{lemma:MINRES_solve_exactly}
	The vector $ \xx $ is a solution of \cref{eq:least_squares} if and only if there exists a vector $\yy \in \comp^{d}$ such that $\xx = \AAd \bb + (\eye_{d} - \AAd \AA) \yy$. Furthermore, for any solution $ \xx $, we have $\AA \xx = \AA \AAd \bb$ and $\rr = \bb - \AA \xx = (\eye - \AA \AAd) \bb$.
\end{lemma}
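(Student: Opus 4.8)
The plan is to route everything through the single characterization that $\xx$ solves \cref{eq:least_squares} if and only if $\AA\xx = \AA\AAd\bb$; the stated parametrization and the residual formula then drop out from short manipulations with the Moore-Penrose identities \cref{eq:pi}. To set this up, I would first record that $\PP \defeq \AA\AAd$ is the orthogonal projector onto $\rg{\AA}$: it is Hermitian by \cref{eq:pi_3}, it is idempotent since $\PP\PP = \AA(\AAd\AA\AAd) = \AA\AAd = \PP$ by \cref{eq:pi_2}, and $\rg{\PP} = \rg{\AA}$ because $\AA\AAd\AA = \AA$ by \cref{eq:pi_1}. For an arbitrary $\xx$ I would then split the residual as $\bb - \AA\xx = (\eye - \PP)\bb + (\PP\bb - \AA\xx)$. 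Since $\AA\xx \in \rg{\AA} = \rg{\PP}$ we have $\PP(\AA\xx) = \AA\xx$, so the second summand lies in $\rg{\PP}$, whereas $\PP(\eye-\PP)\bb = \zero$ places the first summand in $\Null(\PP) = \rg{\PP}^{\perp}$; the two are orthogonal, and Pythagoras gives $\vnorm{\bb - \AA\xx}^2 = \vnorm{(\eye - \PP)\bb}^2 + \vnorm{\PP\bb - \AA\xx}^2$. As the first term is independent of $\xx$, the objective is minimized exactly when the second vanishes, i.e. when $\AA\xx = \PP\bb = \AA\AAd\bb$. The ``furthermore'' claim is then immediate, since any solution has $\rr = \bb - \AA\xx = \bb - \AA\AAd\bb = (\eye - \AA\AAd)\bb$.

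It remains to convert the condition $\AA\xx = \AA\AAd\bb$ into the stated parametrization. For the ``if'' direction I would substitute $\xx = \AAd\bb + (\eye_{d} - \AAd\AA)\yy$ and compute $\AA\xx = \AA\AAd\bb + (\AA - \AA\AAd\AA)\yy = \AA\AAd\bb$, where the second term dies by \cref{eq:pi_1}. For the ``only if'' direction, given any solution $\xx$ I would verify that the choice $\yy = \xx$ reproduces it: indeed $\AAd\bb + (\eye_{d} - \AAd\AA)\xx = \AAd\bb + \xx - \AAd(\AA\xx)$, and substituting $\AA\xx = \AA\AAd\bb$ followed by \cref{eq:pi_2} in the form $\AAd\AA\AAd = \AAd$ collapses the expression to $\xx$.

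I do not expect a genuine obstacle here, as this is a standard fact; the only points needing care are (i) correctly invoking $\Null(\PP) = \rg{\PP}^{\perp}$ for the Hermitian projector $\PP$ so that the orthogonal splitting (and hence the Pythagorean identity) is justified, and (ii) tracking which Moore-Penrose identity is used where, since \cref{eq:pi_1} and \cref{eq:pi_2} play dual roles in the two directions of the parametrization argument.
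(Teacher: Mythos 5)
Your proof is correct and complete; note that the paper does not prove this lemma at all, deferring it to a standard reference (Bj\"orck's book), and your argument --- showing $\PP = \AA\AAd$ is the orthogonal projector onto $\rg{\AA}$, using the Pythagorean splitting to characterize solutions by $\AA\xx = \AA\AAd\bb$, and then converting this into the parametrization via \cref{eq:pi_1,eq:pi_2} --- is exactly the standard textbook proof being cited. Both directions of the parametrization and the residual formula are handled correctly, including the two points of care you flagged.
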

By \cref{lemma:MINRES_solve_exactly}, the general solution of \cref{eq:least_squares} is given by 
\begin{align*}
	\xx = \xxd + \zz, \quad \text{where } \quad \zz \in \Null(\AA).
\end{align*}
In other words, at $ t = g $, the final iterate of MINRES is a vector that is implicitly given as a linear combination of two vectors, namely $ \AAd \bb $ and some $ \zz \in \Null(\AA) $. Our MN refinement technique amounts to eliminating the component $ \zz $ to obtain the pseudo-inverse solution $ \xxd $.

\subsection{Hermitian and skew-Hermitian systems}
\label{sec:hermitian}
We now detail our MN-re- finement to obtain the pseudo-inverse solution of \cref{eq:least_squares} from MINRES iterations. We show that not only does our strategy apply to the typical case where $ \AA $ is Hermitian, but it can also be readily adapted to cases where the underlying matrix is skew-Hermitian \cite{greif2009iterative}.

\begin{theorem}[MN refinement for Hermitian Systems]
	\label{thm:MINRES_dagger}
	Let $ \xxg $ be the final iterate of \cref{alg:MINRES} for solving \cref{eq:least_squares} with the Hermitian matrix $ \AA \in \herm^{\dn \times \dn} $. \begin{enumerate}
		\item If $ \rrg = \bm{0} $, then $ \xxg = \AAd \bb $.
		\item If $ \rrg \neq \bm{0} $, define the MN refinement vector 
		\begin{align}
			\label{eq:MINRES_MN-refinement}
			\hxxd = \xxg - \frac{\dotprod{\rrg, \xxg}}{\vnorm{\rrg}^2} \rrg = \left(\eye_{d} - \frac{\rrg \rrg^{\H}}{\vnorm{\rrg}^{2}}\right) \xxg,
		\end{align}
		where $ \rrg = \bb - \AA \xxg $. Then, we have $ \hxxd = \AAd \bb $.
	\end{enumerate}
\end{theorem}

\begin{proof}
	If $ \rrg = \bm{0} $, since $ \bb \in \rg{\AA} $, we naturally obtain $ \xxg = \xxd $ by \cite[Theorem 3.1]{choi2011minres}. Now, suppose $ \rrg \neq \zero $, which implies $ \bb \not\in \rg{\AA} $. Since $ \xxg \in \mathcal{K}_{g}(\AA, \bb) $, we can write $ \xxg = \sum_{i=1}^{g} a_{i} \AA^{i-1} \bb $ for some $a_i \in \comp$, $i=1,\dots,g$. It then follows
	\begin{align*}
		\xxg = \xxg - a_1(\eye-\AA\AAd)\bb + a_1(\eye-\AA\AAd)\bb = \pp + \qq,
	\end{align*}
	where
	\begin{align*}
		\pp &\defeq \xxg - a_1(\eye-\AA\AAd)\bb = a_1 \AA \AAd \bb + {\sum}_{i=2}^{g} a_{i} \AA^{i-1} \bb,\\
		\qq &\defeq a_1 (\eye - \AA \AAd) \bb = a_1 \rrg.
	\end{align*}
	Since $ \AA $ is Hermitian, we have $ \AA^{2} \AAd = \AA $ \cite[Fact 6.3.17]{bernstein2009matrix}. So $\AA \pp = \AA \xxg$, which implies that the normal equation is satisfied for $ \pp $. In other words, $ \pp $ is a solution to \cref{eq:least_squares}. We also recall from \cref{lemma:MINRES_solve_exactly} that $\xxg = \AAd \bb + \left(\eye_{d} - \AAd \AA \right) \yy $ for some $ \yy \in \comp^{d}$. Now, since $ \pp \in \rg{\AA} = \rg{\AAd} $ and $ \qq \in \Null(\AA) $, we must have that $ \pp = \AAd\bb $. Also, since $ \pp \perp \qq$, we can find $ a_1 $ as 
	\begin{align*}
		a_1  = \frac{\dotprod{\rrg, \xxg}}{\vnorm{\rrg}^{2}},
	\end{align*}
	which gives the desired result.
\end{proof}

\begin{remark}
From the decomposition $\xx = \xxd + \zz$, with $\zz \in \Null(\AA)$, we see that the foundation of applying MN refinement lies in obtaining a handle on the vectors $\zz$. For example, the proof of \cref{thm:MINRES_dagger} reveals that, for any Hermitian system, as long as $\xxg$ is a least-squares solution satisfying $\xxg \in \mathcal{K}_{g}(\AA, \bb)$, we have $\xxg = \xx^+ + \alpha \rrg$ for some scalar $\alpha$, allowing the construction of an MN-refinement. 
In this sense, it appears that MN-refinement could be extended to another least-squares solver whose solution involves a null-space component $\zz$ that can be expressed as a linear combination of \emph{known} vectors. However, focusing on the framework \cref{eq:MINRES} and to maintain clarity of scope, this paper considers only its most well-known implementation, namely the MINRES solver.
\end{remark}

In \cref{prop:MN-refinement_t}, we relate the MN refinement at iteration $ t $ of MINRES to the orthogonal projection of the iterate $ \xxt $ onto $ \AA \Kt{\AA, \bb} $. \cref{prop:MN-refinement_t} implies that, not only the MN refinement of $ \xxg $ recovers the pseudo-inverse solution, but also the MN refinement of $ \xxt $ is the minimum-norm component within the subspace $\AA \Kt{\AA, \bb}$ at every iteration $ t $. Consequently, performing MN refinement at the final iteration amounts to the orthogonal projection of $ \xxg $ onto $ \AA \Kg{\AA, \bb} $, effectively eliminating the contribution of the portion of $ \bb $ that lies in the null space of $ \AA $ from the final solution.
\begin{proposition}
	\label{prop:MN-refinement_t}
	$ \hxxtN $ is the orthogonal projection of $ \xxt $ onto $ \AA \Kt{\AA, \bb} $ in MINRES, where
	\begin{align}
		\label{eq:MN-refinement_t}
		\hxxtN \triangleq \xxt - \frac{\dotprod{\rrt, \xxt}}{\vnorm{\rrt}^2} \rrt = \left(\eye_{d} - \frac{\rrt \rrt^{\H}}{\vnorm{\rrt}^{2}}\right) \xxt.
	\end{align}
\end{proposition}
\begin{proof}
	Note that, for $ t \leq g-1 $, $ \AA \Kt{\AA, \bb} = \Span\{\AA \bb, \AA^2 \bb, \ldots, \AA^t \bb\} $ is a $ t $-dimensional subspace. The oblique projection framework gives $ \rrt \perp \AA \Kt{\AA, \bb} $. Therefore, $ \mathcal{K}_{t+1}(\AA, \bb) = \Span\{\rrt\} \oplus \AA \Kt{\AA, \bb} $ and $\mathcal{K}_{t+1}(\AA, \bb)$ has dimension $t+1$. From $ \xxt \in \Kt{\AA, \bb} \subset \Ktn{\AA, \bb} $, it follows that 
	\begin{align*}
		\xxt = \PP_{\rrt} \xxt + \PP_{\AA \mathcal{K}_{t}} \xxt,    
	\end{align*} 
	where $ \PP_{\rrt}$ and $\PP_{\AA \mathcal{K}_{t}}$ are orthogonal projections onto the span of $ \rrt $ and the subspace $ \AA \Kt{\AA,\bb} $, respectively. Noting that $ \PP_{\rrt} =  \rrt \rrtH/\vnorm{\rrt}^2$ gives the desired result.
\end{proof}

Recall that if $\AA$ is skew-Hermitian, i.e., $ \AAH = -\AA $, then we have $ i\AA \in \herm^{\dn \times \dn} $. Thus, by \cite{choi2013minimal}, we can apply MINRES with inputs $ i\AA $ and $ i\bb $ to solve \cref{eq:MINRES}. We immediately obtain the following result.
\begin{corollary}[MN refinement for Skew-Hermitian Systems]
	\label{coro:MINRES_SH_dagger}
	If $\AA \in \comp^{\dn \times \dn} $ is skew-Hermitian, the result of \cref{thm:MINRES_dagger} continues to hold as long as \cref{alg:MINRES} is applied with inputs $ i\AA $ and $ i\bb $. Note that, here, $ \rrg = i\bb - i\AA \xxg $. 
\end{corollary}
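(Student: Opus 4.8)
The plan is to reduce the skew-Hermitian case entirely to the Hermitian case already settled in \cref{thm:MINRES_dagger}, by passing to the transformed pair $(i\AA, i\bb)$. First I would verify that $i\AA$ is genuinely Hermitian: since $\AAH = -\AA$, conjugate-linearity of the adjoint in the scalar gives $(i\AA)^{\H} = \overline{i}\,\AAH = (-i)(-\AA) = i\AA$, so $i\AA \in \herm^{\dn \times \dn}$ and MINRES is well defined on the inputs $i\AA$ and $i\bb$ (this is exactly the device used in \cite{choi2013minimal}).

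Next I would record the three invariances that make the two problems interchangeable. Because $\vnorm{i\bb - i\AA\xx}^2 = \vnorm{\bb - \AA\xx}^2$ for every $\xx$, the two least-squares objectives coincide, so any minimizer of one is a minimizer of the other. Moreover, the scalar factors $i^{k}$ do not affect complex spans, whence $\mathcal{K}_t(i\AA, i\bb) = \mathcal{K}_t(\AA, \bb)$ for all $t$; in particular the grade is preserved, so MINRES on $(i\AA, i\bb)$ terminates at the same index $g$ and the ``final iterate'' $\xxg$ is unambiguous. Finally, using $(c\AA)^{\dagger} = c^{-1}\AAd$ for any nonzero scalar $c$ (which is routine to check against the four Moore--Penrose conditions of \cref{def:pinverse}), I get $(i\AA)^{\dagger}(i\bb) = (-i\AAd)(i\bb) = \AAd\bb = \xxd$, so the pseudo-inverse solution of the transformed system equals that of the original.

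With these identifications in hand, the corollary follows by a direct appeal to \cref{thm:MINRES_dagger} applied to the Hermitian pair $(i\AA, i\bb)$, whose residual at termination is precisely $\rrg = i\bb - i\AA\xxg$ as stated. If $\rrg = \zero$, then $i\bb \in \rg{i\AA} = \rg{\AA}$ and the theorem yields $\xxg = (i\AA)^{\dagger}(i\bb) = \xxd$; otherwise, the lifted vector $\hxxd = \xxg - \dotprod{\rrg,\xxg}\rrg/\vnorm{\rrg}^2$ equals $(i\AA)^{\dagger}(i\bb) = \xxd$ by the second part of the theorem.

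I expect the only point requiring care, rather than a genuine obstacle, to be the bookkeeping of complex scalars: one must invoke conjugate-linearity in the first argument both when checking Hermiticity of $i\AA$ and when verifying $(i\AA)^{\dagger} = -i\AAd$, and one must be sure that the lifting coefficient is formed from the transformed residual $i\bb - i\AA\xxg$ rather than from $\bb - \AA\xxg$. Since the entire argument is carried out in the transformed coordinates, no re-expression back into the original residual is needed, and the claim is immediate once the three invariances above are in place.
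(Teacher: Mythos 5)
Your proposal is correct and follows essentially the same route as the paper: verify via the Moore--Penrose conditions of \cref{def:pinverse} that $(i\AA)^{\dagger} = -i\AAd$, observe that $(i\AA)^{\dagger}(i\bb) = \AAd\bb$, and invoke \cref{thm:MINRES_dagger} for the Hermitian pair $(i\AA, i\bb)$. Your additional checks --- Hermiticity of $i\AA$, coincidence of the objectives, and invariance of the Krylov subspaces and grade --- are details the paper leaves implicit (deferring to \cite{choi2013minimal}), so they strengthen rather than alter the argument.
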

\begin{proof}
	We can easily verify that $ -i\AAd $ is the pseudo inverse of $ i\AA $ by \cref{def:pinverse}. Hence, we obtain the desired result as in the proof of \cref{thm:MINRES_dagger} by noticing $ [i \AA]^{\dagger} (i\bb) = - i\AAd i\bb = \AAd \bb $.
\end{proof}

\subsection{Complex-symmetric systems}
\label{sec:CSMINRES}

We now consider the case where $\AA \in \symm^{d \times d}$ is complex-symmetric, i.e., $ \AAT = \AA $ but $ \AAH \neq \AA $. Least-squares systems involving such matrices arise in many applications such as data fitting \cite{luk2002exponential,ammar1999computation}, viscoelasticity \cite{christensen2012theory,arts1992experimental}, quantum dynamics \cite{bar1997fast,bar1995new}, electromagnetics \cite{arbenz2004jacobi}, and power systems \cite{howle2005iterative}. Although MINRES was initially designed for handling Hermitian systems, it has since been extended to complex-symmetric settings \cite{choi2013minimal}. Here, we extend our MN refinement procedure for obtaining the pseudo-inverse solution in such settings. We also note that the QMR method of Freund \cite{freund1992conjugate,freund1995software} was designed for solving non-Hermitian systems, but unlike MINRES, it requires $\AA$ to be nonsingular.

In order to provide clear explanations for the MN refinement in this case, we first briefly review the construction of the extension of MINRES that is adapted by Choi \cite{choi2013minimal} for complex-symmetric systems. For this, we mostly follow the development in the original work of \cite{choi2013minimal} with a few modifications. The major difference for complex-symmetric systems is that the usual Lanczos process \cite{paige1975solution} is replaced with the Saunders process \cite{choi2013minimal,saunders1988two} with respect to the Saunders subspace \cite{choi2013minimal,saunders1988two}, which is defined as
\begin{align}
	\label{eq:saunders}
	\mathcal{S}_{t}(\AA, \bb) = \kryl{t_1}{\AA \bAA, \bb} \oplus \kryl{t_2}{\AA \bAA, \AA \bbb}, \quad t_1 + t_2 = t, \quad 0 \leq t_1 - t_2 \leq 1,
\end{align}
where $ \oplus $ is the direct sum operator. 
	%
	%

We include the complex-symmetric case as part of \cref{alg:MINRES}. We note that although the bulk of \cref{alg:MINRES} is given for the Hermitian case, the particular modifications needed for complex-symmetric settings are highlighted on the right column of \cref{alg:MINRES}. Further details regarding the construction of MINRES iterations in this case are given in  \cref{app:CSMINRES}.
%


\begin{lemma}
	\label{lemma:residual_CS}
	In \cref{alg:MINRES} for complex-symmetric matrices, $ \rrt $ can be updated via $\rrt = s_t^2 \rrtp - \phi_{t} \bc_t \vv_{t+1}$.
\end{lemma}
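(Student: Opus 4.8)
The plan is to reduce the residual to a single known column of the orthogonal factor $\QQt^{\H}$ and then peel off the last reflector, relating what remains to the step-$(t-1)$ residual. First I would start from the Saunders representation $\rrt = \VVtn(\beta_1 \ee_1 - \hTTt \yyt)$ derived above and insert $\QQt^{\H}\QQt = \eye$ (the complex-symmetric reflectors $\QQ_{i,i+1}$ of \eqref{eq:block_Q} are unitary because $\abs{c_i}^2 + s_i^2 = 1$). Using $\QQt \beta_1\ee_1 = [\ttt_{t};\,\phit]$ together with $\QQt\hTTt\yyt = [\RRt\yyt;\,0] = [\ttt_{t};\,0]$, which holds by $\RRt\yyt = \ttt_{t}$ from the update step and the QR decomposition \eqref{eq:T_QR_decomp}, the difference collapses to $\phit\ee_{t+1}$, so that
\[
	\rrt = \phit \VVtn \QQt^{\H} \ee_{t+1}.
\]
Applying the identical argument at index $t-1$ yields $\rrtp = \phitp \VVt \QQ_{t-1}^{\H}\ee_{t}$, hence $\VVt\QQ_{t-1}^{\H}\ee_{t} = \rrtp/\phitp$, which will be the quantity I substitute back in.

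Next I would exploit the nested structure of the reflectors. Since every $\QQ_{i,i+1}$ with $i \leq t-1$ leaves the last coordinate untouched, the $(t+1)\times(t+1)$ factor splits as $\QQt = \QQ_{t,t+1}\,\mathrm{diag}(\QQ_{t-1},1)$, where $\QQ_{t-1}$ is the $t\times t$ factor from step $t-1$. Taking conjugate transposes gives $\QQt^{\H} = \mathrm{diag}(\QQ_{t-1}^{\H},1)\,\QQ_{t,t+1}^{\H}$, and a direct computation of the last column of $\QQ_{t,t+1}^{\H}$ (whose active $2\times 2$ block is $[\,c_t,\,s_t;\,s_t,\,-\bc_t\,]$) produces $\QQ_{t,t+1}^{\H}\ee_{t+1} = s_t\ee_{t} - \bc_t\ee_{t+1}$. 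Therefore
\[
	\QQt^{\H}\ee_{t+1} = s_t \begin{bmatrix} \QQ_{t-1}^{\H}\ee_{t} \\ 0 \end{bmatrix} - \bc_t\, \ee_{t+1}.
\]

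Finally I would multiply by $\phit\VVtn$. The first term gives $\phit s_t\, \VVt\QQ_{t-1}^{\H}\ee_{t} = \phit s_t\, \rrtp/\phitp$, and invoking $\phit = s_t\phitp$ from \cref{alg:MINRES} turns the scalar into $s_t^2$; the second term gives $-\phit\bc_t\,\vvtn$ since $\VVtn\ee_{t+1} = \vvtn$. Combining these yields $\rrt = s_t^2\rrtp - \phit\bc_t\vvtn$, as claimed.

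The main obstacle will be bookkeeping the conjugations introduced by the complex-symmetric Householder reflectors: unlike the real Givens rotations of the Hermitian case (cf. \cref{rm:real}), taking the conjugate transpose of $\QQ_{t,t+1}$ interchanges which of $c_t,\bc_t$ appears, so one must verify that it is precisely $\bc_t$ (not $c_t$) that survives in the final term. The remaining ingredients—unitarity of $\QQt$, the nested factorization, and $\phit = s_t\phitp$—are routine.
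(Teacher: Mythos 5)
Your proof is correct and takes essentially the same route as the paper's: both rest on the representation $ \rrt = \phi_{t} \VVtn \QQt^{\H} \ee_{t+1} $, peel off the last reflector via $ \QQt^{\H}\ee_{t+1} = s_t\,\mathrm{diag}(\QQ_{t-1}^{\H},1)\,\ee_{t} - \bc_t\,\ee_{t+1} $, and conclude with $ \phi_{t} = s_{t}\phi_{t-1} $, and your conjugation bookkeeping (it is indeed $ \bc_t $ that survives) matches the paper's computation exactly. The only difference is cosmetic: you derive the starting identity from the subproblem data $ \QQt\beta_1\ee_1 = [\ttt_{t};\,\phi_{t}] $ and $ \RRt\yyt = \ttt_{t} $, whereas the paper simply cites it from \cite[Eqn (B.1)]{choi2013minimal}, making your version slightly more self-contained.
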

\begin{proof}
	By \cite[Eqn (B.1)]{choi2013minimal} and \cref{eq:block_Q}, it holds that
	\begin{align*}
		\rrt &= \phi_{t} \VVtn \QQt^{\H} \ee_{t+1} = \phi_{t} \begin{bmatrix}
			\VVt & \vv_{t+1}
		\end{bmatrix} \begin{bmatrix}
			\QQ_{t-1}^{\H} & \\
			& 1
		\end{bmatrix} \begin{bmatrix}
			s_t \ee_t \\
			-\bc_t
		\end{bmatrix} \\
		&= \phi_{t} s_t \VVt \QQ_{t-1}^{\H} \ee_t - \phi_{t} \bc_t \vv_{t+1} = s_t^2 \phi_{t-1} \VVt \QQ_{t-1}^{\H} \ee_t - \phi_{t} \bc_t \vv_{t+1},
	\end{align*}
	as desired. 
\end{proof}

We now give the procedure to obtain the pseudo-inverse solution of \cref{eq:least_squares} from \cref{alg:MINRES} when $ \AA $ is complex-symmetric. In our analysis, we use the Tagaki singular value decomposition (SVD) of $\AA$, which is ensured to be symmetric for such matrices \cite{bunse1988singular}. Specifically, the Tagaki SVD of $ \AA $ is given by 
\begin{align*} 
	\AA = \begin{bmatrix}
		\UU & \UUp
	\end{bmatrix} \begin{bmatrix}
		\SIGMA & \zero \\
		\zero & \zero
	\end{bmatrix} \begin{bmatrix}
		\UU & \UUp
	\end{bmatrix}^{\T} = \UU \SIGMA \UU^{\T},
\end{align*}
where $ \UU \in \comp^{\dn \times r_{\AA}} $ and $ \UUp \in \comp^{\dn \times (\dn - r_{\AA})} $ are orthogonal matrices and $ \Sigma \in \real^{r_{\AA} \times r_{\AA}} $ is a diagonal matrix containing $ r_{\AA} \leq d $ positive values. Note that $ \rg{\UUp} $ is the orthogonal complement of $ \rg{\UU} $, and hence $ [\UU \hspace*{2mm} \UUp] $ is unitary. From \cref{def:pinverse}, we can also see that 
\begin{align*}
	\AAd = \begin{bmatrix}
		\bUU & \bUUp
	\end{bmatrix} \begin{bmatrix}
		\SIGMA^{-1} & \zero \\
		\zero & \zero
	\end{bmatrix} \begin{bmatrix}
		\UU & \UUp
	\end{bmatrix}^{\H}.
\end{align*}

\begin{theorem}[MN refinement for Complex-symmetric Systems]
	\label{thm:CSMINRES_dagger}
	Let $ \xxg $ be the final iterate of \cref{alg:MINRES} for solving \cref{eq:least_squares} with the complex-symmetric matrix $ \AA \in \comp^{\dn \times \dn} $. 
	\begin{enumerate}
		\item If $ \rrg = \bm{0} $, then $ \xxg = \AAd \bb $.
		\item If $ \rrg \neq \bm{0} $, let us define the MN refinement vector 
		\begin{align}
			\label{eq:CSMINRES_MN-refinement}
			\hxxd = \xxg - \frac{\dotprod{\brrg, \xxg}}{\vnorm{\brrg}^2} \brrg = \left(\eye_{d} - \frac{\brrg \rrg^{\T}}{\vnorm{\brrg}^{2}}\right) \xxg,
		\end{align}
		where $ \rrg = \bb - \AA \xxg $. Then, we have $ \hxxd = \AAd \bb $.
	\end{enumerate}
\end{theorem}
\begin{proof}
	The proof is similar to  that of \cref{thm:MINRES_dagger}, and hence is omitted.
\end{proof}

Similar to \cref{prop:MN-refinement_t}, the MN refinement in \cref{eq:CSMINRES_MN-refinement} can be regarded as the  orthogonal projection of the final iterate $ \xxg $ onto $ \bAA \mathcal{S}_{g}(\AA, \bb) $. To establish this, we first show that any iteration of MINRES in the complex-symmetric setting can be formulated as a special Petrov-Galerkin orthogonality condition with respect to the underlying Saunders subspace.
\begin{lemma}
	\label{lemma:CS_property}
	In \cref{alg:MINRES} for complex-symmetric matrices, we have
	\begin{align*}
		\AA \brrt &= \phi_t (\gamma_{t+1} \vvtn + \delta_{t+2} \vv_{t+2}), \\
		\dotprod{\bxx_i, \AA \brrt} &= 0, \quad 0 \leq i \leq t, \\
		\rrt &\perp \AA \mathcal{S}_{t}(\bAA, \bbb), 
	\end{align*}
	where $ 0 \leq t \leq g $ and $ \vv_{g+1} = \bm{0} $.
\end{lemma}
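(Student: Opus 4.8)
The plan is to establish the three claims in sequence, with the first identity carrying essentially all of the work; the two orthogonality statements then follow almost for free. I would prove the formula for $\AA\brrt$ by induction on $t$, using the residual recursion of \cref{lemma:residual_CS} together with the Saunders three-term recursion \cref{eq:lanczos_CS}.

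For the inductive computation I would start from $\rrt = s_t^2\rrtp - \phi_t\bc_{t}\vvtn$ and take complex conjugates. Since $s_t$ and $\phi_t$ are real and $\overline{\bc_{t}} = c_t$, this yields $\brrt = s_t^2\brrtp - \phi_t c_t\bvv_{t+1}$, and hence $\AA\brrt = s_t^2\,\AA\brrtp - \phi_t c_t\,\AA\bvv_{t+1}$. Substituting the inductive hypothesis $\AA\brrtp = \phi_{t-1}(\gamma_t\vvt + \delta_{t+1}\vvtn)$ and the three-term recursion $\AA\bvv_{t+1} = \beta_{t+1}\vvt + \alpha_{t+1}\vvtn + \beta_{t+2}\vv_{t+2}$, I would collect the coefficients of $\vvt$, $\vvtn$, and $\vv_{t+2}$. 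Using $\phi_t = s_t\phi_{t-1}$, the coefficient of $\vvt$ becomes $\phi_t(s_t\gamma_t - c_t\beta_{t+1})$, which vanishes by the definitions $c_t = \gamma_t/\gamma_t^{[2]}$ and $s_t = \beta_{t+1}/\gamma_t^{[2]}$ from \cref{eq:T_QR_decomp}. The coefficient of $\vv_{t+2}$ is $-\phi_t c_t\beta_{t+2} = \phi_t\delta_{t+2}$ and that of $\vvtn$ is $\phi_t(s_t\delta_{t+1} - c_t\alpha_{t+1}) = \phi_t\gamma_{t+1}$, both matching the scalar recursions for $\delta$ and $\gamma$ in \cref{alg:MINRES}. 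The base case $t=0$ is verified directly from $\brr_0 = \beta_1\bvv_1$, together with $\gamma_1 = \alpha_1$ and $\delta_2 = \beta_2$ obtained from the initialization $c_0 = -1$, $s_0 = 0$, $\delta_1 = 0$; the boundary case $t = g$ is accommodated by the convention $\vv_{g+1} = \bm{0}$ (and for $t<g$ one has $s_t>0$, so dividing by $s_t$ is legitimate).

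With the first identity in hand, the second is structural: $\AA\brrt \in \Span\{\vvtn, \vv_{t+2}\}$, whereas each $\bxx_i$ with $i\le t$ satisfies $\xx_i = \bVV_i\yy_i$, hence $\bxx_i = \VV_i\byy_i \in \Span\{\vv_1,\dots,\vv_i\} \subseteq \Span\{\vv_1,\dots,\vvt\}$. Orthonormality of the Saunders vectors makes these two spans orthogonal, so $\dotprod{\bxx_i,\AA\brrt} = 0$. For the third claim I would first identify $\mathcal{S}_t(\bAA,\bbb) = \overline{\mathcal{S}_t(\AA,\bb)} = \Span(\bVVt)$ by conjugating the defining direct sum in \cref{eq:saunders}, so that $\AA\,\mathcal{S}_t(\bAA,\bbb) = \Span(\AA\bVVt)$. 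The required orthogonality $\rrt \perp \AA\bVVt$ is precisely the least-squares optimality of the MINRES subproblem \cref{eq:MINRES_sub}: writing $\AA\bVVt = \VVtn\hTTt$ and $\rrt = \phi_t\VVtn\QQt^{\H}\ee_{t+1}$ (from the proof of \cref{lemma:residual_CS}), orthonormality of $\VVtn$ reduces the claim to $\hTTt^{\H}\QQt^{\H}\ee_{t+1} = \bm{0}$, which holds because $(\QQt\hTTt)^{\H} = \hRRt^{\H} = [\,\RRt^{\H}\ \ \bm{0}\,]$ has zero last column.

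The main obstacle is the coefficient bookkeeping in the inductive step of the first identity: everything hinges on correctly matching the conjugated residual recursion against the scalar definitions of $\gamma_t,\delta_t,c_t,s_t$ and the update $\phi_t = s_t\phi_{t-1}$, and on tracking which quantities are real versus complex in the complex-symmetric setting, in contrast to the purely real rotation coefficients of the Hermitian case noted in \cref{rm:real}. Once those scalar identities are confirmed, the two orthogonality statements are immediate.
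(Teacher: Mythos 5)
Your proposal is correct, but it proves the lemma by a genuinely different mechanism than the paper. For the identity $\AA \brrt = \phi_t (\gamma_{t+1} \vvtn + \delta_{t+2} \vv_{t+2})$, the paper conjugates the closed-form expression $\rrt = \phi_t \VV_{t+1} \QQt^{\H} \ee_{t+1}$ from \cite[Eqn (B.1)]{choi2013minimal}, inserts the Saunders relation $\AA\bVV_{t+1} = \VV_{t+2}\hTT_{t+1}$, and outsources the structure of $\hTT\QQ^{\T}\ee_{t+1}$ to the reasoning of \cite[Lemma 3.3]{choi2011minres}; you instead run a self-contained induction on the residual recursion of \cref{lemma:residual_CS} and the three-term recursion \cref{eq:lanczos_CS}, and your coefficient bookkeeping is verified to be exactly right: the second row of the reflector in \cref{alg:MINRES} is conjugate-free even in the complex-symmetric case, so $\gamma_{t+1} = s_t\delta_{t+1} - c_t\alpha_{t+1}$ and $\delta_{t+2} = -c_t\beta_{t+2}$ give precisely your $\vvtn$ and $\vv_{t+2}$ coefficients, the $\vvt$ coefficient cancels via $c_t = \gamma_t/\gamma_t^{[2]}$, $s_t = \beta_{t+1}/\gamma_t^{[2]}$, and your base case using $c_0=-1$, $s_0=0$, $\delta_1=0$ is correct. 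Your argument thus makes explicit what the paper handles by citation — at the cost of more computation, it buys a proof that does not lean on the external references for the first claim. The divergence is sharper for the Petrov--Galerkin statement: the paper derives $\rrt \perp \AA\,\mathcal{S}_t(\bAA,\bbb)$ \emph{from} the first identity, using orthonormality of the Saunders vectors together with the complex-symmetric conjugation identity $\dotprod{\AA\brrt,\zz} = \overline{\dotprod{\rrt,\AA\bzz}}$ for $\zz \in \mathcal{S}_t(\AA,\bb)$, whereas you prove it independently of the first identity from least-squares optimality, reducing it to the zero last column of $\hRRt^{\H}$ in the QR factorization \cref{eq:T_QR_decomp} — arguably the more robust route, since it isolates the orthogonality as a structural fact about the subproblem rather than a consequence of the recursion. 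Two trivial points: your parenthetical about dividing by $s_t$ is vacuous (no division occurs; you only use $s_t^2\phi_{t-1} = s_t\phi_t$), and note that your third-claim argument still implicitly relies on \cite[Eqn (B.1)]{choi2013minimal} through the expression for $\rrt$ taken from the proof of \cref{lemma:residual_CS}, so the proposal is not entirely citation-free either.
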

\begin{proof}
	From \cite[Eqn (B.1)]{choi2013minimal}, we have
	\begin{align*}
		\AA \brrt = \phi_t \AA \bVV_{t+1} \QQtT \ee_{t+1} = \phi_t \begin{bmatrix}
			\VVt & \vvtn
		\end{bmatrix} \hTTt \QQtT \ee_{t+1} = \phi_t (\gamma_{t+1} \vvtn + \delta_{t+2} \vv_{t+2})
	\end{align*}
	where the last equality is obtained using a similar reasoning as in \cite[Lemma 3.3]{choi2011minres}. We then obtain $ \AA \brrt \perp \Span \{\vv_1, \vv_2, \ldots, \vv_{t} \} $, i.e., $ \dotprod{\AA \brrt, \zz} = 0 $ for any $ \zz \in \mathcal{S}_{t}(\AA, \bb) $ by the orthogonality of columns of $ \VV_{t+1} $. This implies $ \dotprod{\rrt, \AA \bzz} = 0 $, from which the desired result follows.
\end{proof}

Using \cref{lemma:CS_property}, we get the equivalent of \cref{prop:MN-refinement_t} in the complex-symmetric setting.
\begin{proposition}
	\label{prop:MN-refinement_t_CS}
	$ \hxxtN $ is the orthogonal projection of $ \xxt $ onto $ \bAA \mathcal{S}_{t}(\AA, \bb) $ in MINRES, where
	\begin{align*}
		\hxxtN \triangleq \xxt - \frac{\dotprod{\brrt, \xxt}}{\vnorm{\brrt}^2} \brrt = \left(\eye_{d} - \frac{\brrt \rrt^{\T}}{\vnorm{\brrt}^{2}}\right) \xxt.
	\end{align*}
\end{proposition}

\section{Pseudo-inverse solutions in preconditioned MINRES}
\label{sec:pMINRES}

To solve systems involving ill-conditioned matrices and, in turn, to speed up iterative procedures, preconditioning is a very effective strategy, indispensable for large ill-conditioned problems \cite{saad2011numerical,saad2003iterative,bjorck1996numerical,bjorck2015numerical}.
The primary focus of research efforts on preconditioning has been on solving consistent systems of linear equations \cite{saad2003iterative,saad2011numerical,benzi2005numerical,gould2017state,morikuni2013inner,morikuni2015convergence,rozloznik2002krylov,bjorck1996numerical,bjorck2015numerical}. 
For instance, when solving the linear system $ \AA \xx = \bb $, where $ \AA \succ \zero $, one can consider a positive definite matrix $ \MM \succ \zero $ and solve the transformed problem 
	\begin{align}
		\label{eq:pCG}
		(\MMs \AA \MMs) \; \txx = \MMs \bb,
	\end{align}
	followed by $\xx = \MMsin \txx$.
Formulation \cref{eq:pCG} is often referred to as split preconditioning. Alternative formulations also exist, e.g., the celebrated preconditioned CG solves the left-preconditioned system $\MMs \AA \xx = \MMs \bb$ \footnote{Recall that preconditioned CG solves the left-preconditioned system by operating under the inner product $\dotprod{.,.}_{\MMsin}$, whereas CG employs the standard Euclidean inner product $\dotprod{.,.}$. In this context, even if $\MMs \AA$ is not Hermitian, it remains self-adjoint with respect to this new inner product.}. 
If $\MM$ is chosen appropriately, the transformed problem can exhibit significantly improved conditioning compared to the original formulation. 
For such problems, the preconditioning matrix $ \MM $ is naturally always taken to be non-singular, see, e.g., \cite{gould2017state,morikuni2013inner,morikuni2015convergence,saad2011numerical,saad2003iterative,bjorck1996numerical,bjorck2015numerical,benzi2005numerical,choi2013minimal,choi2014algorithm}. For many iterative procedures, $\MM$ is \emph{required} to be positive definite, as in, e.g., preconditioned CG and MINRES. 

In stark contrast, 
%
the challenge of obtaining a pseudo-inverse solution to inconsistent systems \cref{eq:least_squares} with $\bb \notin \rg{\AA} $ is greatly exacerbated when the underlying matrix is preconditioned. This is in part due to the fact that, unlike the case of consistent linear systems, the particular choice of the preconditioner can have a drastic effect on the type of obtained solutions in inconsistent settings.
We illustrate such effect in the following simple example.

\begin{example}
	For any $ a > 0 $, consider 
	\begin{align*}
		\AA = \begin{bmatrix}
			a & 0 \\
			0 & 0
		\end{bmatrix}, \quad \bb = \begin{bmatrix}
			1  \\
			1 
		\end{bmatrix} \notin \rg{\AA} \quad \text{and} \quad \MMs = \begin{bmatrix}
			b & 1 \\
			1 & 1
		\end{bmatrix}
	\end{align*}
	Let $ \yy $ to be the pseudo-inverse solution of the preconditioned least-squares problem related to \cref{eq:pCG} and $ \xx = \MMs \yy $.
	Then
	\begin{align*}
		\xx - \xxd = \frac{b+1}{a (b^2+1)^2}\begin{bmatrix}
			b^2+1 \\
			b^2 + b + 2
		\end{bmatrix} \quad \text{and} \quad \rr - \rrd = -\frac{b+1}{b^2+1} \begin{bmatrix}
			1 \\
			0
		\end{bmatrix},
	\end{align*}
	where $ \rr \triangleq \bb - \AA \xx $ and $ \rrd \triangleq \bb - \AA \xxd $. We see that, even though the preconditioner is positive definite, the pseudo-inverse solutions and the respective residuals in the original problem and in the preconditioned version do not coincide. In fact, we can only expect to obtain $ \rr - \rrd \rightarrow \zero $ and $ \xx - \xxd \rightarrow \zero $ as $ b \rightarrow \infty $. Hence, the choice of preconditioner in inconsistent least-squares plays a much more subtle role than in the usual consistent linear system settings. 
\end{example}

Though the construction of preconditioners is predominantly done on an ad hoc basis, a wide range of positive (semi)definite preconditioners can be created by introducing a matrix $ \SS \in \comp^{\dn \times m} $, where $ m \geq 1 $, and letting $ \MM = \SS \SSH $. In this context, $ \SS $ is often referred to as the sub-preconditioner. Naturally, depending on the structure of $ \SS $, the matrix $ \MM $ can be singular. 
Square, yet singular, sub-preconditioners $ \SS \succeq \zero $ have been explored in the context of GMRES \cite{elden2012solving} and MINRES \cite{hong2022preconditioned}. 
Here, we consider an arbitrary sub-preconditioner $ \SS $ and study our MN refinement in the presence of the resulting preconditioner $ \MM = \SS \SSH $.

Before delving any deeper, we state several facts that are used throughout the rest of this section. Let the economy SVD of $ \SS $ and $ \MM $ be given by\
\begin{align}
	\label{eq:def_Md_M_Sd_S}
	\SS = \PP \SIGMA \KK^{\H}, \quad \MM = \PP \SIGMA^2 \PP^{\H},
\end{align}\
where $ \PP \in \comp^{\dn \times r} $ and $ \KK \in \comp^{m \times r} $ are orthogonal matrices, $ \SIGMA \in \real^{r \times r} $ is a diagonal matrix with nonzero diagonals elements, and $ r \leq \min\{d,m\} $ is the rank of $\MM$. \cref{fact:M_S} lists several relationships among the factors in \cref{eq:def_Md_M_Sd_S}. The proofs are trivial and hence omitted.
\begin{fact}
	\label{fact:M_S}
	With \cref{eq:def_Md_M_Sd_S}, we have
	\begin{subequations}
		\label{eq:M_S_property}
		\begin{align}
			\MMd  &= \PP \SIGMA^{-2} \PP^{\H}, \quad \SSd = \KK \SIGMA^{-1}\PP^{\H}, \label{eq:Md_Sd}\\
			\MM &= \SS \SSH, \quad \MMd = \SSdH \SSd, \label{eq:Md_STS}\\
			\MM \MMd &= \MMd \MM = \SS \SSd = \SSdH \SSH = \PP \PPH, \label{eq:Md_PPH} \\
			\SSd \SS &= \SSH \SSdH = \KK \KKH.
			\label{eq:Md_KKH}
		\end{align}
	\end{subequations}
\end{fact}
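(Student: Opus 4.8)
The plan is to treat \cref{fact:M_S} as a sequence of direct verifications resting on two elementary observations: since $\PP$ and $\KK$ have orthonormal columns, $\PPH \PP = \eye_{r}$ and $\KKH \KK = \eye_{r}$, and since $\SIGMA \in \real^{r \times r}$ is diagonal with nonzero diagonal entries it is invertible with $\SIGMA^{\H} = \SIGMA$. The only genuine step is establishing the two pseudo-inverse formulas in \eqref{eq:Md_Sd}; once these are in hand, every remaining identity collapses by substituting \eqref{eq:def_Md_M_Sd_S} and \eqref{eq:Md_Sd} and repeatedly telescoping the inner factors $\PPH\PP$, $\KKH\KK$, and $\SIGMA\SIGMA^{-1}$ to the identity.

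First I would verify $\MMd = \PP\SIGMA^{-2}\PPH$ by checking that $\BB \defeq \PP\SIGMA^{-2}\PPH$ satisfies the four Moore-Penrose conditions of \cref{def:pinverse}. For instance, $\MM\BB\MM = \PP\SIGMA^2(\PPH\PP)\SIGMA^{-2}(\PPH\PP)\SIGMA^2\PPH = \PP\SIGMA^2\PPH = \MM$, and $\BB\MM\BB = \BB$ follows identically, while $\MM\BB = \PP(\SIGMA^2\SIGMA^{-2})\PPH = \PP\PPH$ and likewise $\BB\MM = \PP\PPH$ are Hermitian, so $(\MM\BB)^{\H} = \MM\BB$ and $(\BB\MM)^{\H} = \BB\MM$. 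By the uniqueness asserted in \cref{def:pinverse}, $\BB = \MMd$. An identical computation with $\CC \defeq \KK\SIGMA^{-1}\PPH$ against $\SS = \PP\SIGMA\KKH$ yields $\SS\CC\SS = \SS$, $\CC\SS\CC = \CC$, $\SS\CC = \PP\PPH$, and $\CC\SS = \KK\KKH$, all Hermitian, confirming $\SSd = \KK\SIGMA^{-1}\PPH$.

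With \eqref{eq:Md_Sd} proved, the remaining claims are one-line substitutions. The relation $\MM = \SS\SSH$ follows from $\SS\SSH = \PP\SIGMA(\KKH\KK)\SIGMA^{\H}\PPH = \PP\SIGMA^2\PPH = \MM$, and $\MMd = \SSdH\SSd$ from $\SSdH\SSd = \PP\SIGMA^{-1}(\KKH\KK)\SIGMA^{-1}\PPH = \PP\SIGMA^{-2}\PPH$. The chain in \eqref{eq:Md_PPH} reduces to the single observation that each of $\MM\MMd$, $\MMd\MM$, and $\SS\SSd$ telescopes to $\PP\PPH$, while $\SSdH\SSH = (\SS\SSd)^{\H} = (\PP\PPH)^{\H} = \PP\PPH$ by Hermiticity; the identity \eqref{eq:Md_KKH} is obtained symmetrically, with $\SSd\SS = \KK\KKH$ and $\SSH\SSdH = (\SSd\SS)^{\H} = \KK\KKH$.

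There is no real obstacle here, consistent with the paper's remark that the proofs are trivial; the only point demanding a little care is resisting the temptation to simplify $\PP\PPH$ or $\KK\KKH$ to an identity matrix. Since $r$ may be strictly smaller than both $d$ and $m$, these are merely orthogonal projectors, not $\eye_{d}$ or $\eye_{m}$, so all cancellations must be driven by the inner products $\PPH\PP = \eye_{r}$ and $\KKH\KK = \eye_{r}$ rather than by the outer products.
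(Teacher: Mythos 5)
Your proposal is correct: verifying the two pseudo-inverse formulas in \eqref{eq:Md_Sd} via the four Moore-Penrose conditions of \cref{def:pinverse} and then telescoping $\PPH\PP = \eye_r$, $\KKH\KK = \eye_r$, and $\SIGMA\SIGMA^{-1} = \eye_r$ is exactly the routine computation the paper deems trivial and omits. Your closing caution --- that $\PP\PPH$ and $\KK\KKH$ are projectors rather than identities since $r$ may be less than $d$ and $m$ --- is the one point where such a verification could go wrong, and you handle it correctly.
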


\subsection{Preconditioned Hermitian systems}
\label{sec:pMINRES_H}
As one might expect, for linear least-squares problems arising from inconsistent systems, one can relax the invertibility or positive definiteness requirement for $ \MM $. Specific to our work here, when $ \AA $ is Hermitian, the preconditioner $ \MM \in \comp^{\dn \times \dn} $ will only be required to be PSD. Recently, \cite{hong2022preconditioned} has considered such PSD preconditioners for MINRES in the context of solving consistent linear systems with singular matrix $ \AA $. It was shown that, since $ \AA $ and $ \MM $ are Hermitian and $\dotprod{\AA\MM\vv, \ww}_{\MM} = \dotprod{\vv,\AA\MM\ww}_{\MM}$ for any $ \vv,\ww \in \real^{d} $, the matrix $ \AA\MM $ is self-adjoint with respect to the $\dotprod{\vv,\ww}_{\MM} = \vv^{\H} \MM \ww$ inner-product. This, in turn, gives rise to the right-preconditioning of $ \AA $ as $ \AA\MM $. In such a consistent setting, the iterates of the right-preconditioned MINRES algorithm are generated as
\begin{subequations}
	\label{eq:right_prec}
	\begin{align}
		\cxxt &= \argmin_{\cxx \in \Kt{\AA\MM,\bb}} \vnorm{\bb - \AA \MM \cxx}_{\MM}^2, \label{eq:right_prec_yy}\\
		\xxt &= \MM \cxxt, \label{eq:right_prec_xx}
	\end{align}
\end{subequations}
where  $ \xxo = \cxx_{0} = \bm{0} $ and $ \vnorm{.}^{2}_{\MM} \defeq \dotprod{.,.}_{\MM} $ defines a semi-norm. 

This motivates us to consider the following more general problem 
\begin{align}
	\label{eq:right_prec_02}
	\xxt = 
	\argmin_{\xx \in \MM \Kt{\AA\MM,\bb}} \vnorm{\bb - \AA \xx}_{\MM}^2.
\end{align}
It can be easily shown that the iterate $ \xxt $ from \cref{eq:right_prec} is a solution to \cref{eq:right_prec_02}. However, the converse is not necessarily true, i.e., for any $ \xxt $ from \cref{eq:right_prec_02}, there might not exist $ \cxxt \in \Kt{\AA\MM,\bb} $ satisfying \cref{eq:right_prec_xx}. 
Indeed, for a given $ \xxt $ from \cref{eq:right_prec_02}, to have \cref{eq:right_prec_xx} we must have $ \cxxt  = \MMd \xxt + \left(\eye - \MMd\MM\right) \zz $ for some $ \zz \in \real^{d}$. In other words, we must have $ \cxxt \in \MMd\MM \Kt{\AA\MM,\bb} \oplus \Null(\MM) $. Dropping the inconsequential term $ \Null(\MM) $, it follows that the solutions of \cref{eq:right_prec,eq:right_prec_02} coincide in the restricted case where $ \bb \in \rg{\AA} = \rg{\MM} $, which is precisely the setting considered in \cite{hong2022preconditioned}. 
Here, we consider the more general formulation \cref{eq:right_prec_02}. 
Since
\begin{align*}
	\MM \Kt{\AA\MM,\bb} = \MM \Span\left\{\bb,\AA \MM \bb,\ldots,(\AA\MM)^{t-1} \bb \right\} = \Kt{\MM\AA,\MM\bb},
\end{align*}
the formulation \cref{eq:right_prec_02} is also equivalent to 
\begin{align}
	\label{eq:right_prec_03}
	\xxt = \argmin_{\xx \in \Kt{\MM\AA,\MM\bb}} \vnorm{\bb - \AA \xx}_{\MM}^2.
\end{align}
Formulation \cref{eq:right_prec_03} constitutes the starting point for our theoretical analysis and algorithmic derivations.

\subsubsection*{Derivation of the preconditioned MINRES algorithm}
Consider formulation \cref{eq:right_prec_03} with $ \AA \in \herm^{d \times d} $ and suppose the preconditioner $ \MM $ is given as $ \MM  = \SS \SSH $, where $ \SS \in \comp^{\dn \times m} $ for some $ m \geq 1 $ is the sub-preconditioner matrix. We have  
\begin{align*}
	\xxt = \argmin_{\xx \in \Kt{\MM\AA,\MM\bb}} \vnorm{\bb - \AA \xx}_{\MM}^2 = \argmin_{\xx \in \SS \Kt{\SSH\AA\SS,\SSH\bb}} \vnorm{\SSH\left(\bb - \AA \xx\right)}^{2}.
\end{align*}
This allows us to consider the equivalent problem
\begin{subequations}
	\label{eq:preq_MINRES}
	\begin{align}
		\txxt &= \argmin_{\txx \in \Kt{\tAA, \tbb}} \vnorm{\tbb - \tAA \txx}^2, \label{eq:pMINRES}\\
		\xxt &= \SS \txxt, \label{eq:def_x_r}
	\end{align}
\end{subequations}
where we have defined 
\begin{align}
	\label{eq:def_tA_tb_tr_hr}
	\tAA \triangleq \SSH \AA \SS \in \herm^{m \times m}, \quad \text{and} \quad \tbb \triangleq \SSH \bb \in \comp^{m}.
\end{align}
The residual of the system in \cref{eq:pMINRES} is given by $\trrt = \tbb - \tAA \txxt$, which implies
\begin{align}
	\label{eq:trrt_rrt}
	\trrt = \SSH \rrt, \quad \text{where} \quad \rrt = \bb - \AA \xxt.
\end{align}

Clearly, the matrix in the least-squares problem \cref{eq:pMINRES} is itself Hermitian. As a result, the Lanczos process \cite{paige1975solution,liu2022minres} underlying the MINRES algorithm applied to \cref{eq:pMINRES} amounts to
\begin{align}
	\label{eq:lanczos}
	\betatn \tvv_{t+1} = \tAA \tvvt - \alphat \tvvt - \betat \tvv_{t-1}, \quad 1 \leq t \leq \tg.
\end{align}
Denoting
\begin{align}
	\label{eq:def_z_w_d}
	\betat \tvvt = \SSH \zzt, \quad \wwt = \MM \zzt = \betat \SS \tvvt,
\end{align}
and using \cref{eq:def_z_w_d,eq:Md_STS}, we obtain
\begin{align*}   
	\SSH \zz_{t+1} = \frac{1}{\betat} \SSH \AA \MM \zzt - \frac{\alphat}{\betat} \SSH \zzt - \frac{\betat}{\beta_{t-1}} \SSH \zz_{t-1}.
\end{align*}
This relation allows us to define the following three-term recurrence relation for $ \zzt $: 
\begin{align}
	\label{eq:updates_z_w}
	\zztn = \frac{1}{\betat} \AA \MM \zzt - \frac{\alphat}{\betat} \zzt - \frac{\betat}{\beta_{t-1}} \zz_{t-1} = \frac{1}{\betat} \AA \wwt - \frac{\alphat}{\betat} \zzt - \frac{\betat}{\beta_{t-1}} \zz_{t-1},
\end{align}
with $\zz_{0} = \zero$, $ \zz_1 = \bb $, and $ \beta_0 = \beta_1 = \| \tbb \| $, where $ 1 \leq t \leq \tg $. Thus, the updates \cref{eq:updates_z_w} yield the Lanczos process \cref{eq:lanczos}.

To present the main result of this section, we first need to establish a few technical lemmas. \cref{lemma:Kry_tA_tb} expresses the corresponding Krylov subspace $ \Kt{\tAA, \tbb} $ in terms of the underlying components $ \AA$, $\bb$, $\SS$, and $\MM$. 
\begin{lemma}
	\label{lemma:Kry_tA_tb}
	For any $1 \leq t \leq \tg$, we have $\Kt{\tAA, \tbb} = \SSd \Kt{\MM \AA, \MM \bb}$.
\end{lemma}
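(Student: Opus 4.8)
The plan is to reduce the set identity to a pointwise identity between the natural spanning vectors of the two Krylov subspaces, and then prove that pointwise identity by a short induction. Concretely, I would first show that for every integer $k \geq 0$,
\begin{align}
	\label{eq:plan_pointwise}
	\tAA^{k} \tbb = \SSd (\MM\AA)^{k} \MM \bb,
\end{align}
where $\tAA = \SSH \AA \SS$ and $\tbb = \SSH \bb$ as in \eqref{eq:def_tA_tb_tr_hr}. Granting \eqref{eq:plan_pointwise}, the lemma follows at once by applying it for $k = 0, 1, \ldots, t-1$ and taking spans: $\Kt{\tAA, \tbb} = \Span\{\tbb, \tAA\tbb, \ldots, \tAA^{t-1}\tbb\} = \SSd \Span\{\MM\bb, (\MM\AA)\MM\bb, \ldots, (\MM\AA)^{t-1}\MM\bb\} = \SSd \Kt{\MM\AA, \MM\bb}$, using linearity of $\SSd$ to pull it outside the span.

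The engine of the induction is two ``absorbing'' identities that I would extract from \cref{fact:M_S} (equivalently, from the economy SVDs in \cref{eq:def_Md_M_Sd_S}). The first is $\SSd \MM = \SSH$: since $\MM = \SS\SSH$ and $\SSd\SS = \KK\KKH$ is the orthogonal projector onto $\rg{\SSH}$ by \cref{eq:Md_KKH}, the columns of $\SSH$ are fixed, giving $\SSd \MM = \SSd\SS\SSH = \SSH$. The second is that $\SS\SSd$ acts as the identity on $\rg{\MM}$: by \cref{eq:Md_PPH} we have $\SS\SSd = \PP\PPH = \MM\MMd$, which is precisely the orthogonal projector onto $\rg{\MM}$, so $\SS\SSd \MM = \MM$ and hence $\SS\SSd \zz = \zz$ for any $\zz \in \rg{\MM}$.

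With these in hand the induction is direct. For the base case $k=0$, the first identity gives $\tbb = \SSH\bb = \SSd\MM\bb$, matching \eqref{eq:plan_pointwise}. For the inductive step, assuming \eqref{eq:plan_pointwise} at index $k$, I would compute
\begin{align*}
	\tAA^{k+1}\tbb = \SSH\AA\SS\cdot\SSd(\MM\AA)^{k}\MM\bb = \SSH\AA\,(\SS\SSd)(\MM\AA)^{k}\MM\bb = \SSH\AA(\MM\AA)^{k}\MM\bb = \SSd(\MM\AA)^{k+1}\MM\bb,
\end{align*}
where the third equality uses that $(\MM\AA)^{k}\MM\bb \in \rg{\MM}$ (every such vector begins with a factor $\MM$) together with $\SS\SSd\big|_{\rg{\MM}} = \eye$, and the final equality re-applies $\SSH = \SSd\MM$. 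I do not anticipate a serious obstacle here; the only point demanding care is verifying that the Krylov vectors $(\MM\AA)^{k}\MM\bb$ genuinely lie in $\rg{\MM}$ so that the projector $\SS\SSd$ can be absorbed — this is where the structure $\MM = \SS\SSH$ and the possible singularity of $\MM$ matter, and it is the crux that makes the two intermediate projections ($\SSd\SS$ and $\SS\SSd$) cancel correctly despite $\SS$ not being invertible.
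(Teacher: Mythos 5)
Your proof is correct and follows essentially the same route as the paper's: both establish the vectorwise identity $\tAA^{k}\tbb = \SSd(\MM\AA)^{k}\MM\bb$ by collapsing $\SS\SSH = \MM$ inside the powers and applying $\SSH = \SSd\SS\SSH = \SSd\MM$, then take spans. The only cosmetic difference is that the paper telescopes the product directly to get $\tAA^{k}\tbb = \SSH\AA(\MM\AA)^{k-1}\MM\bb$ in one computation, whereas your induction re-inserts $\SS$ against $\SSd$ and therefore additionally invokes the projector fact $\SS\SSd = \PP\PPH$ acting as the identity on $\rg{\MM}$ --- a fact the paper's one-pass argument never needs.
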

\begin{proof}
	By \cref{eq:def_tA_tb_tr_hr,eq:Md_STS,eq:Md_KKH} using the identity $\SSH = \SSd \SS \SSH = \SSd\MM $, we have
	\begin{align*}
		\Kt{\tAA, \tbb} = \Span\{\SSH \bb, \SSH \AA \MM \bb, \ldots, \SSH \AA \left[\MM \AA\right]^{t-2} \MM \bb\} = \SSd \Kt{\MM \AA, \MM \bb}.
	\end{align*}
\end{proof}

Next, we give some properties of the vectors $ \zz_{i} $ and $ \ww_{i} $. 
\begin{lemma}
	\label{lemma:Kry_spans_z_w}
	For any $1\leq t \leq \tg$, we have
	\begin{align*}
		\Span \{\ww_{1}, \ww_{2}, \ldots, \ww_{t}\} &= \Kt{\MM \AA, \MM \bb},\\
		\PP \PPH \Span\{\zz_{1}, \zz_{2}, \ldots, \zz_{t}\} &= \PP \PPH \Kt{\AA \MM, \bb}, 
	\end{align*}
	where $ \PP $ is as in \cref{eq:def_Md_M_Sd_S}.
	Furthermore, $ \ww_{\tg+1} = \zero $ and $ \zz_{\tg+1} \in \Null(\MM) $. 
\end{lemma}
\begin{proof}
	By \cref{eq:def_z_w_d,lemma:Kry_tA_tb,eq:Md_PPH,eq:Md_STS}, we have
	\begin{align*}
		\Span\left\{\ww_1, \ww_2, \ldots, \ww_t\right\} &= \SS \times \Span\left\{\tvv_1, \tvv_2, \ldots, \tvv_t\right\} = \SS \Kt{\tAA, \tbb} \\
		&= \SS \SSd \Kt{\MM \AA, \MM \bb} = \Kt{\MM \AA, \MM \bb},
	\end{align*}
	and 
	\begin{align*}
		\PP \PPH \Span\left\{\zz_1, \zz_2, \ldots, \zz_t\right\} &= \SSdH \times \Span\left\{\tvv_1, \tvv_2, \ldots, \tvv_t\right\} = \SSdH \SSd \Kt{\MM \AA, \MM \bb} \\
		&= \MMd \Kt{\MM \AA, \MM \bb} = \PP \PPH \Kt{\AA \MM, \bb},
	\end{align*}
	where the vectors $\tvv_i$ are those appearing in the Lanczos process \cref{eq:lanczos}.
	
	At $ t = \tg + 1 $, from \cref{eq:def_z_w_d} and the fact that $ \tvv_{\tg+1} = \zero $, it follows that $ \ww_{\tg+1} = \zero $ and $ \SSH \zz_{\tg+1} = \zero $, i.e., $ \zz_{\tg+1} \in \Null(\MM) $.
\end{proof}

We now show how the coefficients $ \alpha_{t} $ and $ \beta_{t} $ in \cref{eq:updates_z_w} can be computed, i.e., how to construct $ \hTTt $ in \cref{eq:tridiagonal_T}. Note that, by construction, $ \beta_{1} = \| \tbb \| = \sqrt{\dotprod{\zz_{1}, \ww_{1}}} $.
\begin{lemma}
	\label{lemma:alpha_beta}
	For $ 1 \leq t \leq \tg $, we can compute $ \alpha_{t} $ and $ \beta_{t} $ in \cref{eq:updates_z_w} as
	\begin{align*}
		\alphat = \frac{1}{\betat^2} \dotprod{\wwt, \AA \wwt}, \quad \betatn = \sqrt{\dotprod{\zztn, \wwtn}}.
	\end{align*}
\end{lemma}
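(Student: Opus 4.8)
The plan is to exploit the orthonormality of the Lanczos vectors $\tvvt$ generated by \cref{eq:lanczos} and then translate the resulting closed forms for $\alphat$ and $\betatn$ back into quantities involving $\wwt$ and $\zzt$ through the correspondence recorded in \cref{eq:def_z_w_d}. Since \cref{eq:lanczos} is the standard Lanczos process applied to the Hermitian matrix $\tAA = \SSH \AA \SS$ with unit starting vector $\tvv_1 = \tbb / \vnorm{\tbb}$ (recall $\beta_1 \tvv_1 = \SSH \zz_1 = \SSH \bb = \tbb$ and $\beta_1 = \vnorm{\tbb}$), the vectors $\{\tvvt\}$ are orthonormal in exact arithmetic. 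Taking the inner product of \cref{eq:lanczos} with $\tvvt$ and using $\dotprod{\tvvt, \tvvt} = 1$, $\dotprod{\tvvt, \tvv_{t-1}} = 0$, and $\dotprod{\tvvt, \tvvtn} = 0$ immediately yields the textbook expression $\alphat = \dotprod{\tvvt, \tAA \tvvt}$, while $\betatn$ is, by construction, the normalization constant rendering $\tvvtn$ a unit vector, i.e.\ $\betatn = \vnorm{\betatn \tvvtn}$.

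For the formula for $\alphat$, I would substitute $\SS \tvvt = \wwt / \betat$, which follows from $\wwt = \betat \SS \tvvt$ in \cref{eq:def_z_w_d} (noting $\betat > 0$ is real), into $\alphat = \tvvt^{\H} \SSH \AA \SS \tvvt$. Collecting the $\betat$ factors then gives $\alphat = \betat^{-2} \dotprod{\wwt, \AA \wwt}$; equivalently one computes $\dotprod{\wwt, \AA \wwt} = \betat^2 \dotprod{\tvvt, \tAA \tvvt} = \betat^2 \alphat$ directly.

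For the formula for $\betatn$, I would use the index-shifted form $\betatn \tvvtn = \SSH \zztn$ of the relation $\betat \tvvt = \SSH \zzt$ from \cref{eq:def_z_w_d}, together with $\MM = \SS \SSH$ from \cref{eq:Md_STS}. This gives $\betatn^2 = \vnorm{\betatn \tvvtn}^2 = \vnorm{\SSH \zztn}^2 = \zztn^{\H} \SS \SSH \zztn = \zztn^{\H} \MM \zztn = \dotprod{\zztn, \MM \zztn} = \dotprod{\zztn, \wwtn}$, where the final equality uses $\wwtn = \MM \zztn$. Because $\MM \succeq \zero$, the right-hand side is nonnegative, so the square root is well-defined and $\betatn = \sqrt{\dotprod{\zztn, \wwtn}}$ follows.

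The computations are routine, so no single step is a genuine obstacle; the points demanding care are the bookkeeping of conjugate transposes in the complex setting and justifying that each $\betat$ is real and positive, which is what validates both the substitution $\SS \tvvt = \wwt / \betat$ and the interpretation of $\betatn$ as a normalization constant. The one conceptual ingredient I would state explicitly, to keep the argument self-contained, is the orthonormality of the Lanczos vectors $\tvvt$ of \cref{eq:lanczos}; this is precisely the defining property of the Lanczos process underlying MINRES, and it is what converts the three-term recurrence into the stated closed forms.
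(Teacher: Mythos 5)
Your proposal is correct and takes essentially the same route as the paper's proof: both invoke orthonormality of the Lanczos vectors to write $\alphat = \dotprod{\tvvt, \tAA \tvvt}$, substitute via \cref{eq:def_z_w_d} together with $\MM = \SS\SSH$ from \cref{eq:Md_STS}, and obtain $\betatn$ as the norm of $\SSH\zztn$. The only point the paper treats more carefully is the boundary case $t = \tg$, where your reading of $\betatn$ as a normalization constant (i.e., $\vnorm{\tvvtn} = 1$) no longer applies; there $\beta_{\tg+1} = 0$ and $\ww_{\tg+1} = \zero$ by \cref{lemma:Kry_spans_z_w}, so the stated formula holds trivially, and a sentence to this effect would make your argument complete.
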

\begin{proof}
	By \cref{eq:lanczos,eq:def_z_w_d,eq:Md_STS} and the orthonormality of the Lanczos vectors $ \tvv_{i} $, we obtain
	\begin{align*}
		\alphat = \dotprod{\tvvt, \tAA \tvvt} = \dotprod{\frac{1}{\betat} \SSH \zzt, \left(\SSH \AA \SS\right) \frac{1}{\betat} \SSH \zzt} = \frac{1}{\betat^2} \dotprod{\wwt, \AA \wwt}.
	\end{align*}
	By \cref{eq:def_z_w_d,eq:Md_STS} and the facts that $ \beta_{t+1} > 0 $ and $ \vnorm{\tvvtn} = 1 $ for any $ 1 \leq t \leq \tg-1 $, we get
	\begin{align*}
		\betatn = \sqrt{\vnorm{\betatn \tvvtn}^2} = \sqrt{\dotprod{\SSH \zztn, \SSH \zztn}} = \sqrt{\dotprod{\zztn, \wwtn}}.
	\end{align*}
	For $ t = \tg $, this relation continues to hold because $ \beta_{\tg+1} = 0 $ and by \cref{lemma:Kry_spans_z_w}, $ \ww_{\tg+1} = \zero $.
\end{proof}

Now, recall the update direction within MINRES is given by the three-term recurrence relation \cite{liu2022minres,paige1975solution}
\begin{align}
	\label{eq:updates_v_d}
	\tdd_{t} = \frac{1}{\gamma^{[2]}_{t} } \left(\tvv_{t} - \epsilon_{t} \tdd_{t-2} - \delta^{[2]}_{t} \tdd_{t-1}\right), \quad 1 \leq t \leq \tg,
\end{align}
where $ \tdd_{0} = \tdd_{-1} = \zero $. It is easy to see that $ \tddt \in \Span\{\tvv_1, \tvv_2, \ldots, \tvv_{t} \} $. Let $ \ddt $ be a vector such that $ \ddt = \SS \tddt $, and define $\dd_{0} = \dd_{-1} = \bm{0} $. Multiplying both sides of \cref{eq:updates_v_d} by $ \SS $ gives
\begin{align*}
	\dd_{t} = \SS \tddt = \frac{1}{\gamma^{[2]}_{t} } \left(\SS \tvvt- \epsilon_{t} \SS \tdd_{t-2} - \delta^{[2]}_{t} \SS \tdd_{t-1}\right) = \frac{1}{\gamma^{[2]}_{t} } \left(\SS \tvvt- \epsilon_{t} \dd_{t-2} - \delta^{[2]}_{t} \dd_{t-1}\right),
\end{align*}
which, using \cref{eq:def_z_w_d}, yields the following three-term recurrence relation:
\begin{align}
	\label{eq:update_x_d}
	\dd_{t} = \frac{1}{\gamma^{[2]}_{t} } \left(\frac{1}{\betat} \wwt - \epsilon_{t} \dd_{t-2} - \delta^{[2]}_{t} \dd_{t-1}\right), \quad 1 \leq t \leq \tg.
\end{align}
Clearly, we have ``\cref{eq:update_x_d} $\implies$ \cref{eq:updates_v_d}''. To see this, note that \cref{lemma:Kry_tA_tb} and the identity $ \SSd \SS \SSd = \SSd $ imply that $\SSd \ddt = \SSd \SS \tddt = \tddt$ and $ \tvvt = \SSd \SS \tvvt = \SSd \wwt / \betat $. Hence, multiplying both sides of \cref{eq:update_x_d} by $ \SSd $, we obtain \cref{eq:updates_v_d}. Also, by \cref{lemma:Kry_spans_z_w}, this construction implies that $ \ddt \in \Span\{\ww_1, \ww_2, \ldots, \ww_{t} \} = \Kt{\MM \AA, \MM \bb} $ for $1 \leq t \leq \tg$. 
%
%
Finally, from \cref{eq:def_x_r,eq:update_x_d}, it follows that the update is given by
\begin{align*}
	\xxtn = \SS \txxtn = \SS (\txxt + \taut \tddt) = \xxt + \taut \ddt.
\end{align*}
Initializing with $ \xx_0 = \bm{0} $ gives $ \xxt \in \Kt{\MM \AA, \MM \bb}$. 

Furthermore, let us define $ \tddt = \SSH \cddt $, $ \txxt = \SSH \cxxt $ and construct updates
\begin{align}
	\label{eq:update_cxx_d}
	\cdd_{t} = \frac{1}{\gamma^{[2]}_{t} } \left(\frac{1}{\betat} \zzt - \epsilon_{t} \cdd_{t-2} - \delta^{[2]}_{t} \cdd_{t-1}\right) \quad \text{and} \quad \cxxt = \cxx_{t-1} + \taut \cddt,
\end{align}
where $ \cxx_0 = \cdd_0 = \cdd_{-1} = \bm{0} $. By \cref{lemma:Kry_spans_z_w}, we obtain $ \cddt \in \Span\{\zz_1, \zz_2, \ldots, \zz_{t} \} = \Kt{\AA \MM, \bb} $ and $ \cxxt \in \Kt{\AA \MM, \bb} $. Multiplying both sides of \cref{eq:update_cxx_d} by $ \SSH $ recovers the three-term recurrence relation \cref{eq:updates_v_d}. Clearly, by \cref{eq:def_x_r}, $ \xxt = \SS \txxt = \SS \SSH \cxxt = \MM \cxxt $ is exactly the solution in \cref{eq:right_prec_yy}.

We also have the following recurrence relation on quantities that are connected to the residual.

\begin{lemma}
	\label{lemma:residuals}
	For any $1\leq t \leq \tg$, define $ \hrrt $ as 
	\begin{align}
		\label{eq:def_tA_tb_tr_hr_hrrt}
		\hrrt = \SS \trrt \in \comp^{\dn},
	\end{align}
	where $\trrt = \tbb - \tAA \txxt$. We have
	\begin{subequations}
		\label{eq:residuals}
		\begin{align}
			\hrrt &= s_{t}^2 \hrrtp - \frac{\phi_{t} c_{t}}{\betatn} \wwtn, \label{eq:hr}  \\
			\PP \PPH \rrt &= \PP \PPH \left(s_t^2 \rr_{t-1} - \frac{\phi_{t} c_t}{\beta_{t+1}} \zz_{t+1}\right), \label{eq:PPHr}
		\end{align}
	\end{subequations}
	where $ \PP $ and $ \ww_{t} $ are respectively defined in \cref{eq:def_Md_M_Sd_S,eq:def_z_w_d}.
\end{lemma}
\begin{proof}
	By \cref{eq:def_tA_tb_tr_hr,eq:def_z_w_d} and multiplying both sides of the residual update in \cref{alg:MINRES} by $ \SS $, we get
	\begin{align*}
		\hrrt = \SS \trr_{t} = s_{t}^2 \SS \trr_{t-1} - \phi_{t} c_{t} \SS \tvv_{t+1} = s_{t}^2 \hrrtp - \frac{\phi_{t} c_{t}}{\betatn} \wwtn.
	\end{align*}
	Now, by \cref{eq:def_tA_tb_tr_hr,eq:Md_PPH,eq:Md_STS,eq:def_z_w_d,eq:trrt_rrt} and noting that $ \SSdH = \SSdH \SSd \SS = \MMd \SS $, we have
	\begin{align*}
		\PP \PPH \rrt = \SSdH \trrt = \MMd \SS (s_t^2  \trr_{t-1} - \phi_{t} c_{t} \tvv_{t+1}) 
		= \PP \PPH(s_t^2  \rrtp - \frac{\phi_{t} c_t}{\beta_{t+1}} \zz_{t+1}).
	\end{align*}
\end{proof}

The following result shows that $ \xxt $ and $ \hrrt $ belong to a certain Krylov subspace.
\begin{lemma}
	\label{lemma:Kry_x_hr}
	With \cref{eq:def_tA_tb_tr_hr,eq:def_x_r} and $ \xx_{0} = \zero $, we have for any $ 1 \leq t \leq \tg $,
	\begin{align*}
		\xxt \in \Kt{\MM \AA, \MM \bb},
	\end{align*} 
	and $\Span \{\hrr_{0}, \hrr_{1}, \ldots, \hrr_{t-1}\} = \Kt{\MM \AA, \MM \bb}$.
\end{lemma}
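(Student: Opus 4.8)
The plan is to treat the two assertions separately. The membership $\xxt \in \Kt{\MM\AA, \MM\bb}$ is essentially a byproduct of the iterate recursion already derived before the lemma, so I would dispatch it first and then concentrate on the span identity, which I would prove by induction on $t$ using the residual recurrence \cref{eq:hr} together with \cref{lemma:Kry_spans_z_w}. For the membership, I would unwind the update $\xxt = \xx_{t-1} + \taut \ddt$ with $\xx_0 = \zero$, so that $\xxt = \sum_{i=1}^{t} \tau_i \dd_i$. Since the derivation shows $\dd_i \in \mathcal{K}_{i}(\MM\AA, \MM\bb)$ and the Krylov subspaces are nested, each summand lies in $\Kt{\MM\AA, \MM\bb}$, whence $\xxt \in \Kt{\MM\AA, \MM\bb}$.

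For the span identity I would induct on $t$. The base case exploits $\txx_0 = \zero$: then $\hrr_0 = \SS \trr_0 = \SS \tbb = \SS\SSH \bb = \MM \bb = \ww_1$, so $\Span\{\hrr_0\} = \Span\{\MM\bb\} = \mathcal{K}_{1}(\MM\AA, \MM\bb)$. Assuming $\Span\{\hrr_0, \ldots, \hrr_{t-1}\} = \Kt{\MM\AA, \MM\bb} = \Span\{\ww_1, \ldots, \wwt\}$ (the last equality being \cref{lemma:Kry_spans_z_w}), the recurrence \cref{eq:hr}, namely $\hrrt = s_t^2 \hrrtp - (\phi_{t} c_{t}/\betatn) \wwtn$, writes $\hrrt$ as a combination of $\hrrtp \in \Kt{\MM\AA, \MM\bb}$ and $\wwtn \in \Ktn{\MM\AA, \MM\bb}$, giving the forward inclusion $\Span\{\hrr_0, \ldots, \hrrt\} \subseteq \Ktn{\MM\AA, \MM\bb}$ at once. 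For the reverse inclusion I would solve \cref{eq:hr} for the newest direction, $\wwtn = (\betatn/(\phi_{t} c_{t}))(s_t^2 \hrrtp - \hrrt) \in \Span\{\hrr_0, \ldots, \hrrt\}$; together with $\ww_1, \ldots, \wwt$ already lying in this span by the inductive hypothesis and $\Ktn{\MM\AA, \MM\bb} = \Span\{\ww_1, \ldots, \wwtn\}$, this closes the induction.

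The step I expect to be the main obstacle is exactly the recovery of $\wwtn$, which is legitimate only when the scalar $\phi_{t} c_{t}/\betatn$ is nonzero. For $t < \tg$ one has $\phi_{t} = \vnorm{\trrt} > 0$ and $\betatn > 0$, so the sole danger is $c_{t} = 0$, i.e. a stagnating MINRES step at which $\hrrt = \hrrtp$ and no new direction enters the span. I would therefore devote the care to verifying that $c_{t} \neq 0$ for $1 \leq t \leq \tg - 1$ in this exact-arithmetic setting—equivalently, that the $\vvtn$-component $-\phi_{t} c_{t}$ of the residual does not vanish before termination—since this nonvanishing is precisely what forces each successive residual to enlarge the span and hence drives $\dim \Span\{\hrr_0, \ldots, \hrr_{t-1}\}$ up to $t$. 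Note that the alternative route of counting dimensions directly runs into the identical point, because linear independence of $\hrr_0, \ldots, \hrr_{t-1}$ is itself equivalent to the absence of such stagnation.
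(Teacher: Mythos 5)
Your outline is, ingredient for ingredient, the paper's own proof: the membership claim via $\xxt=\sum_{i=1}^{t}\tau_i\dd_i$ with $\dd_i\in\mathcal{K}_i(\MM\AA,\MM\bb)$ and nestedness, and the span identity via \cref{eq:hr}, \cref{lemma:Kry_spans_z_w}, and $\hrr_0=\ww_1$ (the paper cites exactly these three facts and nothing else). The difference is that you made explicit the step the paper passes over in silence --- recovering $\wwtn$ from \cref{eq:hr}, which requires $c_t\neq 0$ --- and there your plan breaks down: the nonvanishing you propose to verify is simply false in exact arithmetic. MINRES genuinely stagnates on indefinite systems. Take $\MM=\SS=\eye$ (a legitimate PSD preconditioner, so $\hrrt=\rrt$ and $\tg=g$) with
\begin{align*}
\AA=\begin{bmatrix} 0 & 1\\ 1 & 0\end{bmatrix},\qquad \bb=\ee_1 .
\end{align*}
Then $\alpha_1=0$, $\beta_2=1$, so $\gamma_1=0$, $c_1=0$, $s_1=1$, giving $\xx_1=\xx_0=\zero$ and $\rr_1=\rr_0=\bb$, while $g=2$. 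Hence $\Span\{\hrr_0,\hrr_1\}=\Span\{\bb\}$ is a strict subspace of $\mathcal{K}_{2}(\AA,\bb)=\comp^2$, and the equality asserted in the lemma fails at $t=2$. Your closing remark is also correct that no dimension-counting detour escapes this: linear independence of the residuals is equivalent to the absence of stagnation, which cannot be guaranteed.

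So the gap you flagged is real and unfillable as stated: what survives in general is only the forward inclusion $\Span\{\hrr_0,\ldots,\hrr_{t-1}\}\subseteq\Kt{\MM\AA,\MM\bb}$, with equality holding precisely when no intermediate rotation has $c_i=0$ for $1\leq i\leq t-1$ (a single stagnation step suffices to lose it, even though two consecutive ones cannot occur). The honest repair is either to add a no-stagnation hypothesis or to weaken the second claim of the lemma to an inclusion. It is worth noting that the paper's one-line proof inherits the identical defect, and that its later use of this lemma --- in the proof of \cref{thm:lifting}, where only $\xxtg,\hrrtg\in\rg{\MM}$ is extracted --- needs nothing beyond the inclusion, so the downstream lifting results are unaffected by this correction.
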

\begin{proof}
	The first result follows from \cref{lemma:Kry_spans_z_w} and our construction of the update direction, as discussed above. The second result is obtained using \cref{eq:hr,lemma:Kry_spans_z_w} and $ \hrr_{0} = \ww_{1} $.
\end{proof}

\begin{remark}
	\label{rm:all_vectors_subspaces}
	Inspecting \cref{eq:PPHr} reveals that a new vector $ \crrt $ can be defined as 
	\begin{align}
		\label{eq:cr}
		\crrt = s_t^2 \crrtp - \frac{\phi_{t} c_t}{\beta_{t+1}} \zz_{t+1}, \quad 1 \leq t \leq \tg,
	\end{align}
	where $ \crr_{-1} \defeq \zero $. This way, we always have $ \PP \PPH \crrt = \PP \PPH \rrt $. Hence, we can compute $ \PP \PPH \rrt $ without performing an extra matrix-vector product $\AA \xxt$. From \cref{eq:updates_z_w,eq:update_cxx_d} as well as $\zz_{0} = \zero$ and $ \zz_1 = \bb $, we have $\zzt \in \Kt{\AA \MM, \bb}$, which in turn implies $ \cddt, \cxxt, \crrtp \in \Kt{\AA \MM, \bb} $. In addition, from \cref{lemma:Kry_x_hr,lemma:Kry_spans_z_w,eq:update_x_d} it follows that $ \wwt, \ddt, \xxt, \hrrtp \in \Kt{\MM \AA, \MM \bb}$. We conclude that all vectors $ \wwt, \ddt, \xxt, \hrrtp $ and $ \zzt, \cddt, \cxxt, \crrtp $ are solely dependent on $ \MM $, and hence are invariant to the particular choices of $ \SS $ that satisfy $\MM = \SS\SSH$.   
\end{remark}    

All these iterative relations give us the preconditioned MINRES algorithm, constructed from \cref{eq:right_prec_03} and  depicted in \cref{alg:pMINRES}.

\begin{algorithm}[t!]
	\caption{The left column presents the preconditioned MINRES algorithm for the Hermitian case, while the right column outlines the particular modifications needed for complex-symmetric settings. \\[0.5ex] \textbf{Hermitian} \hfill \textbf{\cpp{Complex-symmetric}}}
	\label{alg:pMINRES}
	\begin{tikzpicture}[scale=1]
		\node at (0,0) {\begin{minipage}{.98\linewidth}
				\begin{algorithmic}[1]
					\STATE \textbf{Initialization:} $ \AA, \MM \in \herm^{n \times n} $, $ \bb \in \comp^{n} $, $ \MM \succeq \zero $. \hfill\COMMENT{$ \cpp{\AAT = \AA} \in \comp^{n \times n} $}
					\vspace{1mm}
					\STATE $ \crr_0 = \zz_1 = \bb $, $ \ww_1 = \MM \zz_1 $, $ \beta_1 = \sqrt{\dotprod{\zz_1, \ww_1}} $, \hfill\COMMENT{$ \ww_1 = \MM \cpp{\bzz_1} $}
					\vspace{1mm}
					\STATE $ \hrr_{0} = \ww_1 $, $ \phi_0 = \beta_1 $, $ c_0 = -1 $, $ s_0 = \delta_1 = 0 $, \hfill\COMMENT{$ \beta_1 = \sqrt{\dotprod{\cpp{\bzz_1}, \ww_1}} $}
					\vspace{1mm}
					\STATE $ \zz_0 = \xx_0 = \dd_0 = \dd_{-1} = \zero $, $ t = 1 $,
					\vspace{1mm}
					\WHILE{True}
					\vspace{1mm}
					\STATE $ \qqt = \AA \wwt / \beta_{t} $, $\alpha_{t} = \dotprod{\wwt / \beta_{t},\qqt} $, \hfill\COMMENT{$\alpha_{t} = \dotprod{\cpp{\bwwt} / \beta_{t},\qqt} $}
					\vspace{1mm}
					\STATE $ \zztn = \qqt - \alphat \zzt / \beta_{t} - \betat \zz_{t-1} / \beta_{t-1} $, 
					\vspace{1mm}
					\STATE $ \wwtn = \MM \zztn $, \hfill\COMMENT{$ \wwtn = \MM \cpp{\bzztn} $}
					\vspace{1mm}
					\STATE $ \beta_{t+1} = \sqrt{\dotprod{\zz_{t+1}, \ww_{t+1}}} $, \hfill\COMMENT{$ \beta_{t+1} = \sqrt{\dotprod{\cpp{\bzz_{t+1}}, \ww_{t+1}}} $}
					\vspace{2mm}
					\STATE  $\displaystyle 
					\begin{bmatrix}
						\delta^{[2]}_{t} & \epsilon_{t+1} \\
						\gamma_{t} & \delta_{t+1}
					\end{bmatrix} = \begin{bmatrix}
						c_{t-1} & s_{t-1} \\
						s_{t-1} & -c_{t-1}
					\end{bmatrix} \begin{bmatrix}
						\delta_{t} & 0 \\
						\alpha_{t} & \beta_{t+1}
					\end{bmatrix}$, \hfill\COMMENT{$\begin{bmatrix}
							\cpp{\bc_{t-1}} & s_{t-1} \\
							s_{t-1} & -c_{t-1}
						\end{bmatrix}$}
					\vspace{1mm}
					\STATE $ \gamma_{t}^{[2]} = \sqrt{\abs{\gamma_{t}}^2 + \beta_{t+1}^2} $,
					\vspace{1mm}
					\IF{ $ \gamma_{t}^{[2]} \neq 0 $ }
					\vspace{1mm}
					\STATE $ c_{t} = \gamma_{t}/\gamma_{t}^{[2]} $, $ s_{t} = \beta_{t+1} / \gamma_{t}^{[2]} $, 
					\vspace{1mm}
					\STATE $ \tau_{t} = c_{t} \phi_{t-1} $, $ \phi_{t} = s_{t} \phi_{t-1} $, \hfill\COMMENT{$ \tau_{t} = \cpp{\bc_{t}} \phi_{t-1} $}
					\vspace{1mm}
					\STATE $ \dd_{t} = \left(\wwt/\betat - \delta^{[2]}_{t} \dd_{t-1} - \epsilon_{t} \dd_{t-2} \right) / \gamma^{[2]}_{t} $, 
					\vspace{1mm}
					\STATE $ \xx_{t} = \xx_{t-1} + \tau_{t} \dd_{t} $, 
					\vspace{1mm}
					\STATE $ \crrt = s_t^2 \crr_{t-1} - \phi_{t} c_t \zz_{t+1} / \beta_{t+1} $, \hfill\COMMENT{$ \crrt = s_t^2 \crr_{t-1} - \phi_{t} {\cpp{\bc_t}} \zz_{t+1} / \beta_{t+1} $}
					\vspace{1mm}
					\IF{$ \beta_{t+1} \neq 0 $}
					\vspace{1mm}
					\STATE  $ \hrr_{t} = s_{t}^2 \hrr_{t-1} - \phi_{t} c_{t} \ww_{t+1} / \beta_{t+1} $, \hfill\COMMENT{$ \hrr_{t} = s_{t}^2 \hrr_{t-1} - \phi_{t} \cpp{\bc_{t} \bww_{t+1}} / \beta_{t+1} $}
					\vspace{1mm}
					\ELSE
					\vspace{1mm}
					\STATE $ \hrr_{t} = \zero $, \textbf{return}\;\;$ \xx_{t} $.
					\ENDIF
					\ELSE
					\vspace{1mm} 
					\STATE $ c_{t} = \tau_{t} = 0 $, $ s_{t} = 1 $, $ \phi_{t} = \phi_{t-1} $, 
					\STATE $ \hrr_{t} = \hrr_{t-1} $, $ \crr_{t} = \crr_{t-1} $, $ \xx_{t} = \xx_{t-1} $, \textbf{return}\;\;$ \xx_{t} $.
					\vspace{1mm}
					\ENDIF
					\vspace{1mm}
					\STATE $ t \leftarrow t+1 $,
					\vspace{1mm}
					\ENDWHILE
				\end{algorithmic}
		\end{minipage}};
		\draw[densely dotted] (1.5,8) -- (1.5,-8);
	\end{tikzpicture}
\end{algorithm}

As a sanity check, if we set the ideal preconditioner $ \MM = \AAd $, we obtain $ \AAd \bb = \xxt \in \krylovt{\AAd \AA, \AAd \bb} $ for any $ t $, which by \cref{lemma:Kry_x_hr} implies that \cref{alg:pMINRES} terminates at the very first iteration.

We now give the MN refinement procedure for an iterate of \cref{alg:pMINRES} applied to Hermitian systems.

\begin{theorem}[MN refinement for Preconditioned Hermitian Systems]
	\label{thm:MN-refinement}
	Let $ \xxtg $ be the final iterate of \cref{alg:pMINRES} with the Hermitian matrix $ \AA \in \herm^{\dn \times \dn} $. 
	\begin{enumerate}
		\item If $ \hrrtg = \bm{0} $, then we must have $ \xxtg = \SS \txxtg $, where $ \txxtg = \tAAd \tbb $.
		\item If $ \hrrtg \neq \bm{0} $, define the MN refinement vector 
		\begin{align}
			\label{eq:P_MN-refinement}
			\hxxd = \xxtg - \frac{\dotprod{\crrtg, \xxtg}}{\dotprod{\hrrtg, \crrtg}} \hrrtg = \left(\eye_{d} - \frac{\hrrtg \crrtg^{\H}}{\hrrtg^{\H} \crrtg}\right)\xxtg,
		\end{align}
		where $\hrrtg$ and $ \crrtg $ are defined respectively in \cref{eq:hr,eq:cr}. Then, we have $ \hxxd = \SS \txxd$, where $\txxd = \tAAd \tbb $. 
	\end{enumerate}
\end{theorem}
\begin{proof}   
	Following the proof in \cref{thm:MINRES_dagger}, we obtain the desired result for $ \hrrtg = \bm{0} $ trivially. Now we consider $ \hrrtg \neq \bm{0} $. By \cref{thm:MINRES_dagger}, we have
	\begin{align*}
		\txxtg = \txxd + \frac{\dotprod{\trrtg, \txxtg}}{\vnorm{\trrtg}^2} \trrtg.
	\end{align*}
	By \cref{eq:def_x_r,eq:Md_PPH,eq:trrt_rrt,eq:def_tA_tb_tr_hr_hrrt},
	\begin{align*}
		\xxtg = \SS \txxtg = \SS \txxd + \frac{\dotprod{\SSH \rrtg, \txxtg}}{\dotprod{\trrtg, \SSH \rrtg}} \SS \trrtg = \SS \txxd + \frac{\dotprod{\crrtg, \xxtg}}{\dotprod{\hrrtg, \crrtg}} \hrrtg.
	\end{align*}
	Hence, the desired result follows because $ \xxg, \hrrg \in \rg{\SS \SSd} = \rg{\PP \PPH} $ as well as $ \PP \PPH \crrt = \PP \PPH \rrt $ from \cref{rm:all_vectors_subspaces}.
\end{proof}

Similar to \cref{prop:MN-refinement_t}, the general MN refinement at every iteration $ 1 \leq t \leq \tg $ can be formulated as 
\begin{align}
	\label{eq:P_MN-refinement_t}
	\hxx^{\N}_t = \xxt - \frac{\dotprod{\crrt, \xxt}}{\dotprod{\hrrt, \crrt}} \hrrt.
\end{align}
Of course, in general, the MN refinement solution does not coincide with the pseudo-inverse solution to the original unpreconditioned problem \cref{eq:least_squares}. However, it turns out that in the special case where $ \rg{\MM} = \rg{\AA} $,  one can indeed recover $ \AAd\bb $ using our MN refinement.
\begin{corollary}
	\label{coro:pseudo_P}
	If $ \rg{\MM} = \rg{\AA} $, then $ \hrrtg = \bm{0} $ and $ \xxtg = \AAd \bb $.
\end{corollary}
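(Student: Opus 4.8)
The plan is to reduce the corollary to the first case of \cref{thm:lifting}: once we show $\hrrtg = \bm{0}$, that theorem immediately yields $\xxtg = \SS\txxtg$ with $\txxtg = \tAAd\tbb$, after which it only remains to identify this common vector with $\AAd\bb$. Throughout I would lean on two observations. First, combining \cref{eq:def_tA_tb_tr_hr_hrrt,eq:trrt_rrt} with \cref{eq:Md_STS} gives the clean identity $\hrrtg = \SS\trrtg = \SS\SSH\rrtg = \MM\rrtg$, so the quantity to be killed is literally $\MM$ applied to the original residual. Second, since $\MM\succeq\zero$ and $\AA$ is Hermitian, both $\PP\PPH$ (see \cref{eq:Md_PPH}) and $\AA\AAd$ are the orthogonal projector onto their common range; hence the hypothesis $\rg{\MM}=\rg{\AA}$ forces $\PP\PPH = \AA\AAd$.

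First I would show $\hrrtg=\bm{0}$. At the terminal index $t=\tg$ the iterate $\txxtg$ is a least-squares solution of the $(\tAA,\tbb)$ system, so by \cref{lemma:MINRES_solve_exactly} its residual satisfies $\trrtg=(\eye-\tAA\tAAd)\tbb$, and using $\tAA^2\tAAd=\tAA$ for the Hermitian matrix $\tAA=\SSH\AA\SS$ we obtain the normal equation $\tAA\trrtg=\bm{0}$. Writing this out, $\tAA\trrtg=\SSH\AA\SS\,\SSH\rrtg=\SSH\AA\MM\rrtg=\bm{0}$. Left-multiplying by $\SSdH$ and invoking $\SSdH\SSH=\PP\PPH$ from \cref{eq:Md_PPH} turns this into $\PP\PPH\AA\MM\rrtg=\bm{0}$; replacing $\PP\PPH$ by $\AA\AAd$ and using $\AA\AAd\AA=\AA$ collapses it to $\AA\MM\rrtg=\AA\hrrtg=\bm{0}$. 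Thus $\hrrtg\in\Null(\AA)=\rg{\AA}^{\perp}$, while at the same time $\hrrtg=\MM\rrtg\in\rg{\MM}=\rg{\AA}$, so $\hrrtg$ lies in $\rg{\AA}\cap\rg{\AA}^{\perp}=\{\bm{0}\}$, giving $\hrrtg=\bm{0}$.

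With $\hrrtg=\bm{0}$ in hand, the first case of \cref{thm:lifting} yields $\xxtg=\SS\txxtg$, and it remains to verify $\xxtg=\AAd\bb$. From $\MM\rrtg=\hrrtg=\bm{0}$ we get $\rrtg\in\Null(\MM)=\rg{\MM}^{\perp}=\rg{\AA}^{\perp}$; applying the projector $\AA\AAd$ to $\rrtg=\bb-\AA\xxtg$ annihilates the left-hand side and leaves $\AA\xxtg=\AA\AAd\bb$, so $\xxtg$ is in fact a least-squares solution of the original problem \cref{eq:least_squares}. By \cref{lemma:MINRES_solve_exactly}, $\xxtg=\AAd\bb+(\eye-\AAd\AA)\yy$ for some $\yy$, where the first summand lies in $\rg{\AA}$ and the second in $\Null(\AA)=\rg{\AA}^{\perp}$. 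Because \cref{lemma:Kry_x_hr} places $\xxtg\in\mathcal{K}_{\tg}(\MM\AA,\MM\bb)\subseteq\rg{\MM}=\rg{\AA}$, the orthogonal-complement component must vanish, leaving $\xxtg=\AAd\bb$.

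The main obstacle I anticipate is the bookkeeping in the second paragraph: one has to move between the two range structures — the preconditioner range $\rg{\PP}$ and the matrix range $\rg{\AA}$ — and the decisive simplifications ($\SSdH\SSH=\PP\PPH$, then $\PP\PPH=\AA\AAd$, then $\AA\AAd\AA=\AA$) line up only because of the hypothesis $\rg{\MM}=\rg{\AA}$. Pinning down that the terminal iterate genuinely satisfies the normal equation $\tAA\trrtg=\bm{0}$ (rather than producing a merely small residual) is the other point that must be argued carefully, but it follows from the grade-$\tg$ termination discussed around \cref{eq:MINRES} together with \cref{lemma:MINRES_solve_exactly}.
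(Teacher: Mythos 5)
Your proof is correct, and it reaches the conclusion by a genuinely different route than the paper. For the first claim, the paper argues directly that $\rg{\MM}=\rg{\AA}$ makes the transformed system consistent, i.e., $\tbb \in \rg{\tAA}$, whence the final residual vanishes and $\hrrtg = \SS\trrtg = \zero$; you instead extract $\tAA\trrtg = \zero$ from the normal equations at termination, translate it into $\SSH\AA\MM\rrtg=\zero$, and use the projector identification $\PP\PPH=\AA\AAd$ together with the intersection $\rg{\AA}\cap\rg{\AA}^{\perp}=\{\zero\}$ to kill $\hrrtg=\MM\rrtg$. For the second claim, the paper is purely algebraic: starting from $\xxtg=\SS\tAAd\tbb$ (case 1 of \cref{thm:lifting}), it applies the product pseudo-inverse identity of \cite[Fact 6.4.10]{bernstein2009matrix} twice to collapse $\SS\left[\SSH\AA\SS\right]^{\dagger}\SSH\bb$ into $\PP\PPH\AAd\PP\PPH\bb=\AAd\bb$. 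You avoid that external fact entirely: from $\rrtg\in\Null(\MM)=\rg{\AA}^{\perp}$ you show $\AA\xxtg=\AA\AAd\bb$, so $\xxtg$ solves the original problem \cref{eq:least_squares}, and then \cref{lemma:Kry_x_hr} places $\xxtg\in\Kt{\MM\AA,\MM\bb}\subseteq\rg{\MM}=\rg{\AA}$, forcing the null-space component in the decomposition of \cref{lemma:MINRES_solve_exactly} to vanish. Your version is longer but more self-contained, and it makes the mechanism transparent — the hypothesis confines the iterate to $\rg{\AA}$ while simultaneously making it a least-squares solution, which characterizes $\AAd\bb$; the paper's version buys brevity at the price of invoking the triple-product pseudo-inverse fact. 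One small caveat you correctly flagged yourself: your argument leans on the final MINRES iterate of the transformed system being a \emph{global} least-squares solution (so the normal equations hold exactly at grade $\tg$); this is standard for Hermitian systems and is the same fact the paper uses implicitly when it applies \cref{lemma:MINRES_solve_exactly} to $\xxg$ in the proof of \cref{thm:MINRES_dagger}, so no gap results.
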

\begin{proof}
	From $ \rg{\MM} = \rg{\AA} $, it follows that $ \tbb \in \rg{\tAA} $ and $ \hrrtg = \bm{0} $. By applying \cite[Fact 6.4.10]{bernstein2009matrix} twice as well as using \cref{eq:Md_PPH,thm:MN-refinement}, we obtain
	\begin{align*}
		\xxtg = \SS \tAAd \tbb = \SS \left[\SS^\H \AA \SS\right]^{\dagger} \SS^\H \bb = \PP \PPH \AAd \PP \PPH \bb = \AAd \bb.
	\end{align*}
\end{proof}

In the more general case where $ \rg{\MM} \neq \rg{\AA} $, we might have $ \tbb \not\in \rg{\tAA} $. This significantly complicates the task of establishing conditions for the recovery of $\AAd\bb$ from the preconditioned problem. More investigations in this direction are left for future work.

By construction, \cref{alg:pMINRES} is analytically equivalent to \cref{alg:sub_pMINRES}, which involves applying \cref{alg:MINRES} for solving \cref{eq:pMINRES} to obtain iterates $ \txxt, \; 1 \leq t \leq \min \{m, \tg\} $, and then recovering $ \xxt = \SS \txxt \in \comp^{\dn} $ by \cref{eq:def_x_r}.
In this light, when $ m \ll \dn $, the sub-preconditioned algorithm \cref{alg:sub_pMINRES} can be regarded as a dimensionality-reduced version of \cref{alg:pMINRES}, i.e., we essentially first project onto the lower dimensional space $ \comp^{m} $, use \cref{alg:sub_pMINRES}, and then  project back onto the original space $ \comp^{\dn} $. This allows for significantly less storage/computations in \cref{alg:sub_pMINRES} than what is required in \cref{alg:pMINRES}; see experiments in \cref{sec:exp_image} for more discussion. 

\begin{algorithm}
	\caption{The left column presents the sub-preconditioned MINRES algorithm for the Hermitian case, while the right column outlines the particular modifications needed for complex-symmetric settings. \\[0.5ex]  \textbf{Hermitian} \hfill \textbf{\cpp{Complex-symmetric}}}
	\label{alg:sub_pMINRES}
	\begin{tikzpicture}[scale=1]
		\node at (0,0) {\begin{minipage}{.98\linewidth}
				\begin{algorithmic}[1]
					\STATE \textbf{Input:} $ \AA \in \herm^{n \times n} $, $ \bb \in \comp^{n} $, $ \SS \in \comp^{n \times m} $, $ t \leq m $ \hfill\COMMENT{$ \cpp{\AAT = \AA} \in \comp^{n \times n} $}
					\vspace{1mm}
					\STATE Construct $ \tAA = \SSH \AA \SS $, $ \tbb = \SSH \bb $, \hfill\COMMENT{$ \tAA = \cpp{\SST} \AA \SS $, $ \tbb = \cpp{\SST} \bb $}
					\vspace{1mm}
					\STATE Obtain $ \txxt $ with \cref{alg:MINRES} using inputs $ \tAA $, $ \tbb $,
					\vspace{1mm}
					\STATE $ \xxt = \SS \txxt $, $ \hrrt = \SS \trrt $,
					\vspace{1mm}
					\STATE \textbf{Output:} $ \xxt $
				\end{algorithmic}
		\end{minipage}};
		\draw[densely dotted] (2.2,1.3) -- (2.2,-1.3);
	\end{tikzpicture}
\end{algorithm}

\subsection{Preconditioned complex-symmetric systems}
\label{sec:pMINRES_CS}
Our construction of the preconditioned MINRES in the complex-symmetric setting is motivated by \cite{choi2013minimal} and generally follows the standard framework as in \cite{choi2006iterative,choi2011minres,saad2003iterative,rozloznik2002krylov}; see \cref{app:pMINRES_CS} for more details.
We now give the MN refinement procedure for iterates of \cref{alg:pMINRES} applied to complex-symmetric systems.
\begin{theorem}[MN refinement for Preconditioned Complex-symmetric Systems]
	\label{thm:MN-refinement_CS}
	Let $ \xxtg $ be the final iterate of \cref{alg:pMINRES} and suppose that $ \AA \in \symm^{\dn \times \dn} $ is complex-symmetric. 
	\begin{enumerate}
		\item If $ \hrrtg = \bm{0} $, then we must have $ \xxtg = \SS \txxtg $, where $ \txxtg = \tAAd \tbb $.
		\item If $ \hrrtg \neq \bm{0} $, define the MN refinement vector 
		\begin{align*}
			\hxxd = \xxtg - \frac{\dotprod{\cj{\crrtg}, \xxtg}}{\dotprod{\cj{\hrrtg}, \cj{\crrtg}}} \cj{\hrrtg} = \left(\eye_{d} - \cjl{\frac{\hrrtg \crrtg^{\H} }{\hrrtg^{\H} \crrtg}}\right) \xxtg.
		\end{align*}
		where $\hrrtg$ and $ \crrtg $ are defined respectively in \cref{eq:hr_CS,eq:cr_CS}. Then, it holds that $ \hxxd = \SS \txxd$ where $\txxd = \tAAd \tbb $.      
	\end{enumerate}
\end{theorem}
\begin{proof}   
	The proof is similar to that of \cref{thm:MN-refinement}, and hence is omitted.
\end{proof}

Just like \cref{eq:P_MN-refinement_t}, a general MN refinement at every iteration $ 1 \leq t \leq \tg $ can be defined as 
\begin{align*}
	\hxx^{\N}_t = \xxt - \frac{\dotprod{\cj{\crrt}, \xxt}}{\dotprod{\cj{\crrt}, \cj{\hrrt}}} \cj{\hrrt}.
\end{align*}
Similar to \cref{coro:pseudo_P}, \cref{coro:pseudo_P_CS} shows that such MN refinement gives the pseudo-inverse solution of the original unpreconditioned problem, i.e., $ \AAd\bb $.
\begin{corollary}
	\label{coro:pseudo_P_CS}
	When $ \rg{\bMM} = \rg{\AA} $, we must have $ \hrrtg = \bm{0} $ and $ \xxtg = \AAd \bb $.
\end{corollary}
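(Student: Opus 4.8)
The plan is to follow the proof of \cref{coro:pseudo_P} verbatim in spirit, the only real work being to track the conjugations that separate the complex-symmetric construction from the Hermitian one. First I would rewrite the hypothesis as a pair of range identities for the sub-preconditioner. Since $\bMM = \bSS \bSS^{\H}$ we have $\rg{\bMM} = \rg{\bSS}$, so the assumption reads $\rg{\bSS} = \rg{\AA}$; conjugating this gives $\rg{\SS} = \rg{\MM} = \cjl{\rg{\AA}} = \rg{\AAH}$. I would pair these with the Takagi decomposition \cref{eq:decomp_A_Ad_CS}, from which $\rg{\AAd} = \rg{\bUU} = \cjl{\rg{\AA}} = \rg{\AAH}$ and $\Null(\AAd) = \rg{\AA}^{\perp}$. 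These identities are the structural engine of the argument.

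Next I would dispatch $\hrrtg = \bm 0$ by showing the reduced system is consistent, i.e. $\tbb \in \rg{\tAA}$. Writing $\bb = \bb_{R} + \bb_{N}$ orthogonally with $\bb_{R} \in \rg{\AA}$ and $\bb_{N} \in \rg{\AA}^{\perp}$, the identity $\Null(\SST) = \rg{\bSS}^{\perp} = \rg{\AA}^{\perp}$ annihilates the second piece, so $\tbb = \SST \bb = \SST \bb_{R}$. Putting $\bb_{R} = \AA \yy$ and noting $\rg{\AA \SS} = \AA\,\rg{\SS} = \rg{\AA \AAH} = \rg{\AA}$, there is a vector $\zz$ with $\AA \SS \zz = \bb_{R}$, so $\tbb = \SST \AA \SS \zz = \tAA \zz \in \rg{\tAA}$. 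Consistency then forces the final Saunders--MINRES residual of the reduced system to vanish, $\trrtg = \bm 0$, whence $\hrrtg = \bSS \trrtg = \bm 0$ and the first part of \cref{thm:lifting_CS} yields $\xxtg = \SS \tAAd \tbb$.

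It remains to evaluate $\xxtg = \SS\,[\SST \AA \SS]^{\dagger}\,\SST \bb$. I would apply the reverse-order law \cite[Fact 6.4.10]{bernstein2009matrix} twice, stripping the right factor $\SS$ and then the left factor $\SST = \bSS^{\H}$, to get $\tAAd = \SSd \AAd (\bSSd)^{\H}$; the range hypotheses needed for each application are exactly the identities collected in the first step. Using \cref{eq:Md_PPH} in the forms $\SS \SSd = \PP \PPH$ and $(\bSSd)^{\H} \bSS^{\H} = (\bSS \bSSd)^{\H} = \bPP \PPT$, this collapses to $\xxtg = \PP \PPH \AAd \bPP \PPT \bb$. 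Since $\bPP \PPT$ projects orthogonally onto $\rg{\bMM} = \rg{\AA}$ and $\Null(\AAd) = \rg{\AA}^{\perp}$, we have $\AAd \bPP \PPT \bb = \AAd \bb$; and since $\PP \PPH$ projects onto $\rg{\MM} = \rg{\AAH} = \rg{\AAd}$, we have $\PP \PPH \AAd = \AAd$. Combining gives $\xxtg = \AAd \bb$.

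The hard part will be the conjugate bookkeeping rather than any deep new idea. Unlike \cref{coro:pseudo_P}, the two flanking factors of $\tAA$ are genuinely distinct ($\bSS^{\H}$ on the left, $\SS$ on the right), so the two invocations of Fact~6.4.10 carry different range side-conditions, and one must keep straight which vectors live in $\rg{\AA}$ versus $\cjl{\rg{\AA}} = \rg{\AAH}$. The delicate check is that the single hypothesis $\rg{\bMM} = \rg{\AA}$ simultaneously supplies both sets of conditions and that the projectors $\PP\PPH$ and $\bPP\PPT$ land on the complementary ranges $\rg{\AAd}$ and $\rg{\AA}$; the consistency step $\tbb \in \rg{\tAA}$ is the other spot where the conjugate hidden in $\Null(\SST) = \rg{\AA}^{\perp}$ must be used correctly.
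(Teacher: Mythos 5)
Your proposal is correct and takes essentially the same route as the paper: the paper's own proof is exactly this argument in compressed form --- deduce $\tbb \in \rg{\tAA}$ so that $\trrtg = \hrrtg = \zero$, invoke the first case of \cref{thm:lifting_CS}, then apply \cite[Fact 6.4.10]{bernstein2009matrix} twice together with \cref{eq:Md_PPH} to collapse $\SS\left[\SST \AA \SS\right]^{\dagger}\SST\bb$ to $\PP\PPH\AAd\bPP\PPT\bb = \AAd\bb$. Your added bookkeeping (e.g., $\Null(\SST) = \rg{\bSS}^{\perp} = \rg{\AA}^{\perp}$ for the consistency step, and $\rg{\AAd} = \cjl{\rg{\AA}}$, $\Null(\AAd) = \rg{\AA}^{\perp}$ via the Takagi decomposition for the final projector identities) simply makes explicit the steps the paper leaves implicit.
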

\begin{proof}
	From $ \rg{\bMM} = \rg{\AA} $, we obtain $ \tbb \in \rg{\tAA} $ and $ \hrrtg = \bSS \trrtg =  \bm{0} $. By applying \cite[Fact 6.4.10]{bernstein2009matrix} twice and using \cref{eq:Md_PPH,thm:MN-refinement}, we obtain
	\begin{align*}
		\xxtg = \SS \tAAd \tbb = \SS \left[\SS^\T \AA \SS\right]^{\dagger} \SS^\T \bb = \PP \PPH \AAd \bPP \PPT \bb = \AAd \bb.
	\end{align*}
\end{proof}


\begin{figure}[!thbp]
	\centering
	\subfigure[Relative Error of the Plain Iterates]{
		\includegraphics[scale = 0.38]{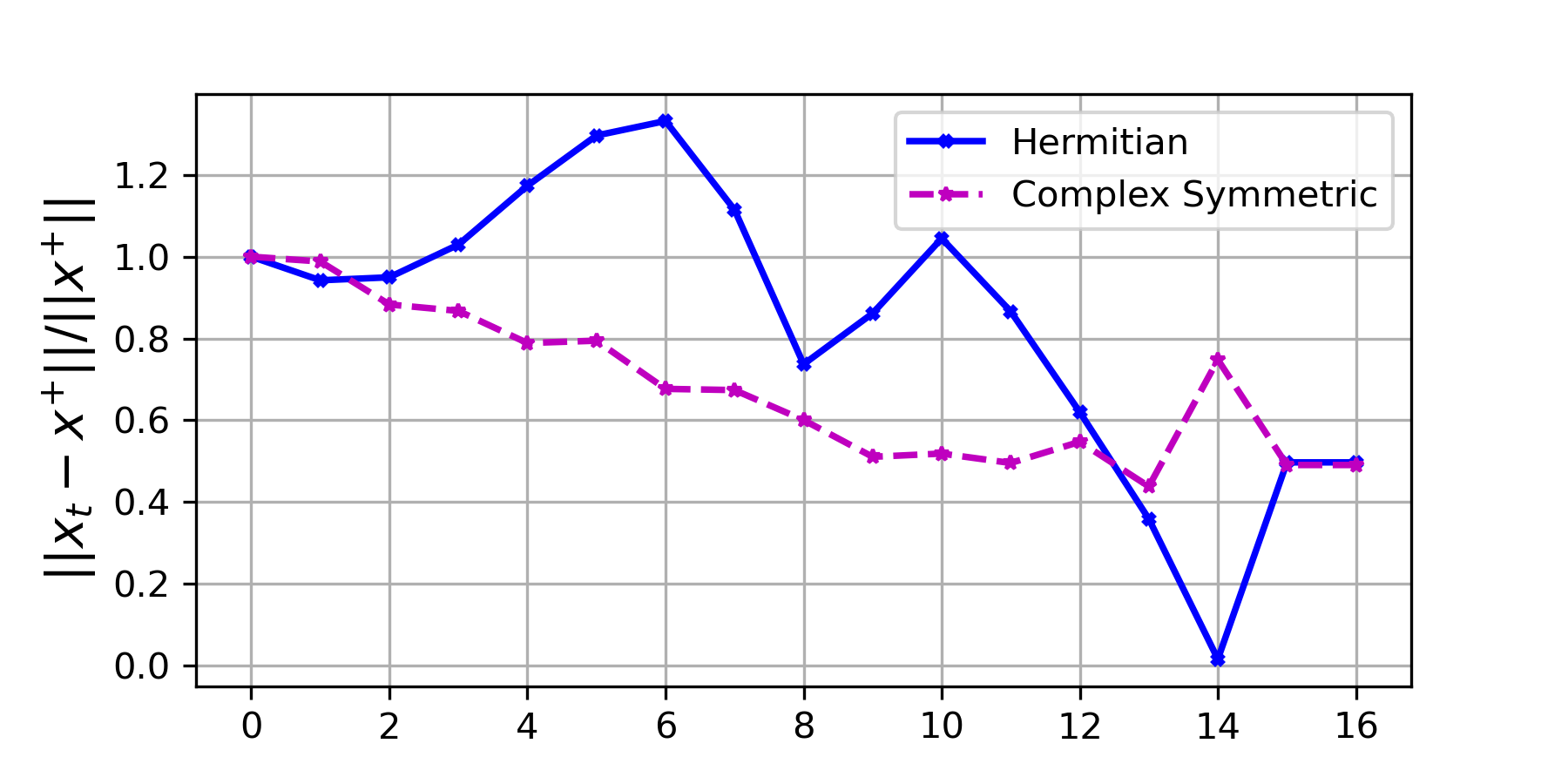}}
	\subfigure[Relative Error of the MN refinement Iterates]{
		\includegraphics[scale = 0.38]{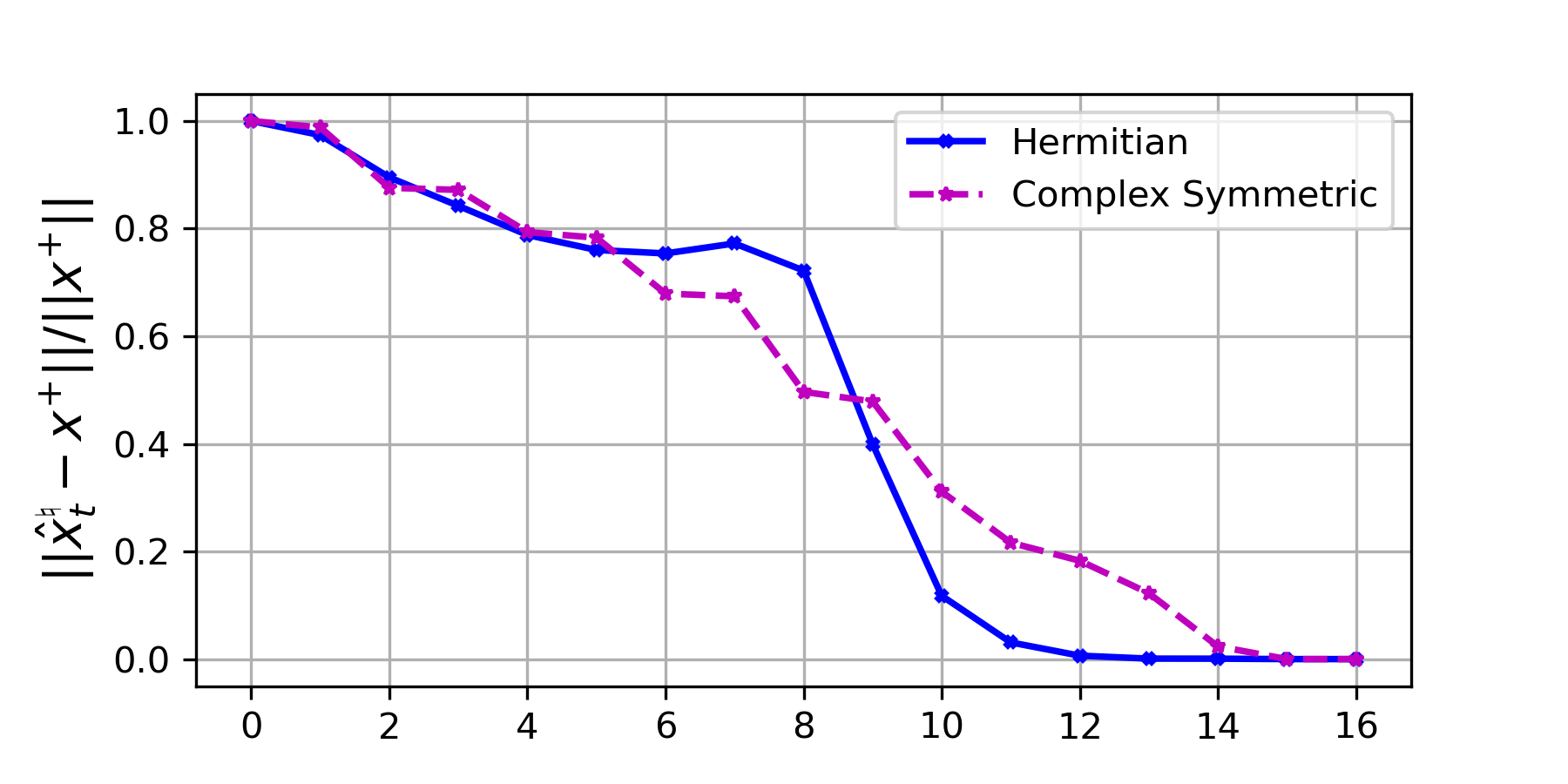}}
	\caption{Verifying \cref{thm:MINRES_dagger,thm:CSMINRES_dagger}. Relative error of the plain (a) and MN refinement (b) iterates of \cref{alg:MINRES} applied to \cref{eq:least_squares} with Hermitian (sold blue) and complex-symmetric (dashed magenta) matrices. The relative error is computed with respect to the pseudo-inverse solution $ \AAd \bb $ in each case. The $x$-axis denotes the iteration counter. \label{fig:pseudo}}
\end{figure}

We end this section by noting that, similar to Hermitian systems, in the case where $ \AA $ is complex-symmetric \cref{alg:pMINRES} is analytically equivalent to \cref{alg:sub_pMINRES}. Hence, when $ m \ll \dn $, the sub-preconditioned algorithm \cref{alg:sub_pMINRES} can be regarded as a dimensionality reduced version of \cref{alg:pMINRES} and allows for significantly less storage/computations than what is required in \cref{alg:pMINRES}. 

\section{Numerical experiments}
\label{sec:exp}

In this section, we provide several numerical examples illustrating our theoretical findings. We first verify our theoretical results in \cref{sec:exp:MN-refinement} using synthetic data. We subsequently aim to explore related applications in image deblurring in  \cref{sec:exp_image}. 

\subsection{Pseudo-inverse solutions via MN refinement: Theorems \ref{thm:MINRES_dagger} and \ref{thm:CSMINRES_dagger}}
\label{sec:exp:MN-refinement}
We first demonstrate \cref{thm:MINRES_dagger,thm:CSMINRES_dagger} on some simple and small synthetic problems. We consider \cref{eq:least_squares} with $d = 20$ and for Hermitian as well as complex-symmetric synthetic matrices. For both matrices, the rank is set to $ r = 15 $. We also let $ \bb = \mathbf{1} $, i.e., the vector of all ones, which ensures that the right-hand side vector $ \bb $ does not lie entirely in the range of these matrices.  In \cref{fig:pseudo}, we plot the relative error of the iterates of \cref{alg:MINRES}, $ \xxt $, as well as the MN refinement iterates $ \hat{\xx}_{t}^{\dagger} $ using \cref{eq:MINRES_MN-refinement} and \cref{eq:CSMINRES_MN-refinement} to the true pseudo-inverse solution $ \xxd = \AAd \bb $, namely $ \| \xxt - \xxd \| / \| \xxd \| $ and  $ \| \hxxtN - \xxd \| / \| \xxd \| $. As can be seen from \cref{fig:pseudo}, in sharp contrast to the plain MINRES iterates, the final MN refinement iterate in each setting coincides with the respective pseudo-inverse solution, corroborating the results of \cref{thm:MINRES_dagger,thm:CSMINRES_dagger}.

\begin{figure}[!thbp]
	\centering
	\includegraphics[trim={7.8cm 2.5cm 7.8cm 2.5cm},clip,width=.24\textwidth]{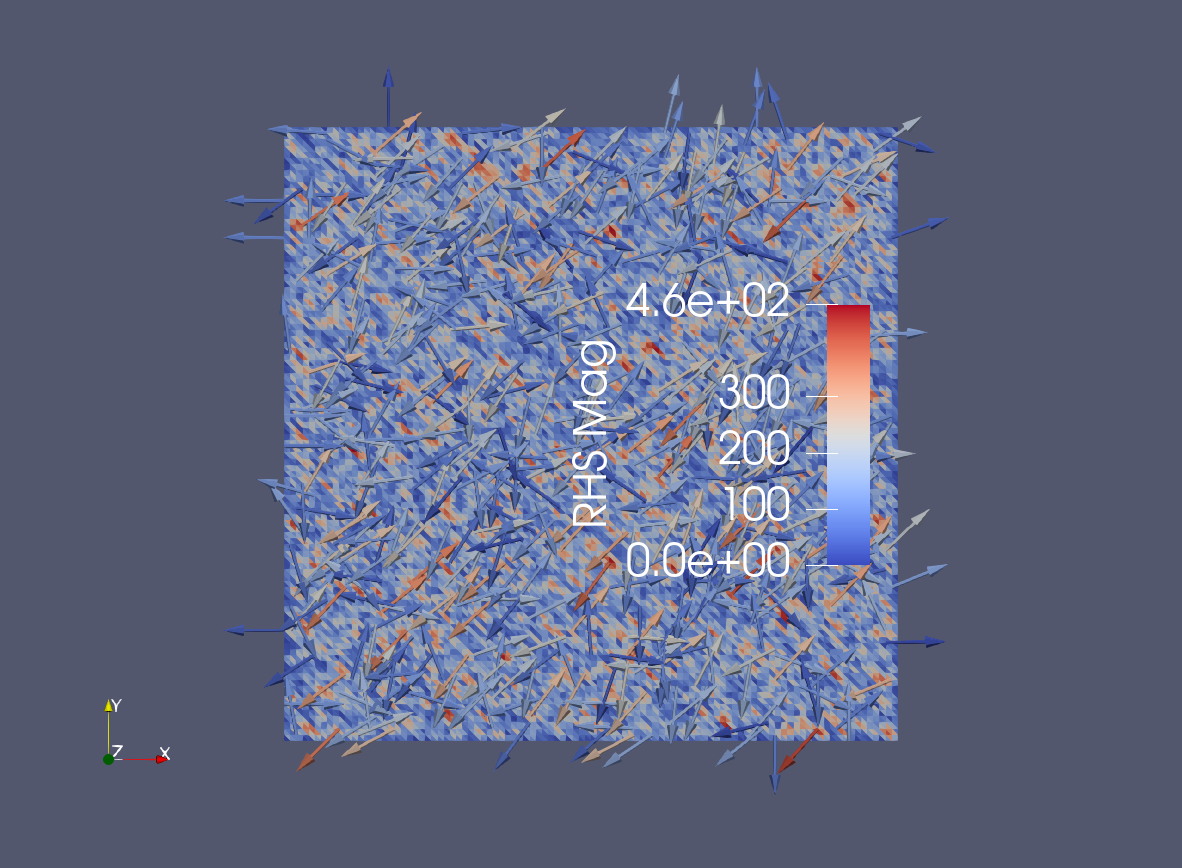}
	\includegraphics[trim={7.8cm 2.5cm 7.8cm 2.5cm},clip,width=.24\textwidth]{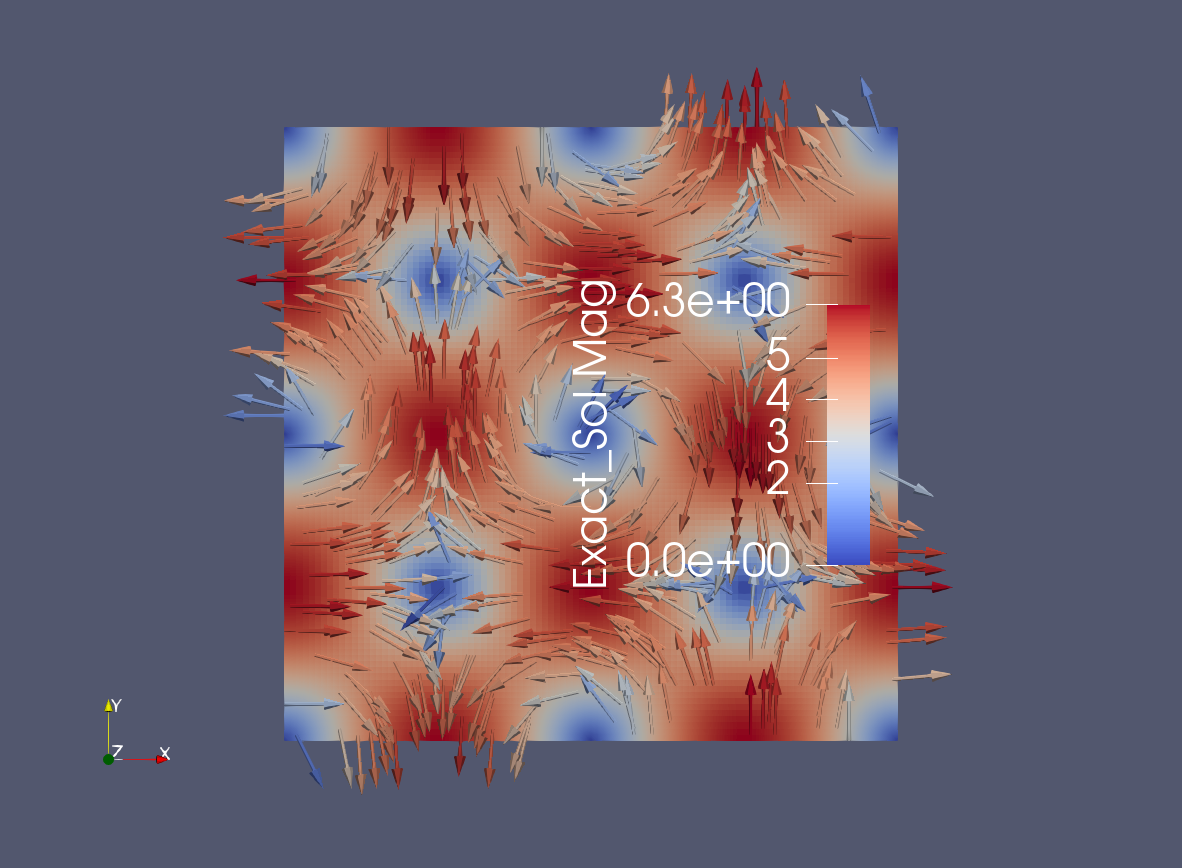}
	\includegraphics[trim={7.8cm 2.5cm 7.8cm 2.5cm},clip,width=.24\textwidth]{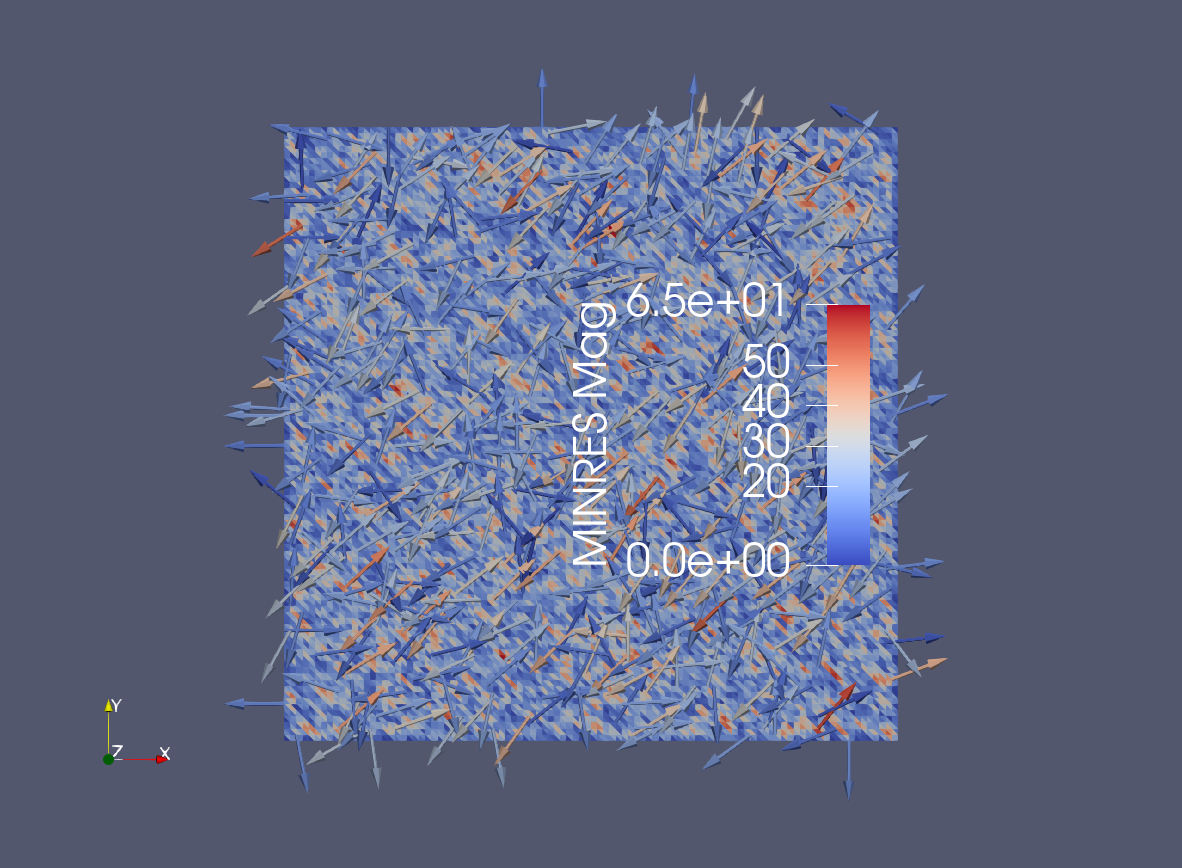}
	\includegraphics[trim={7.8cm 2.5cm 7.8cm 2.5cm},clip,width=.24\textwidth]{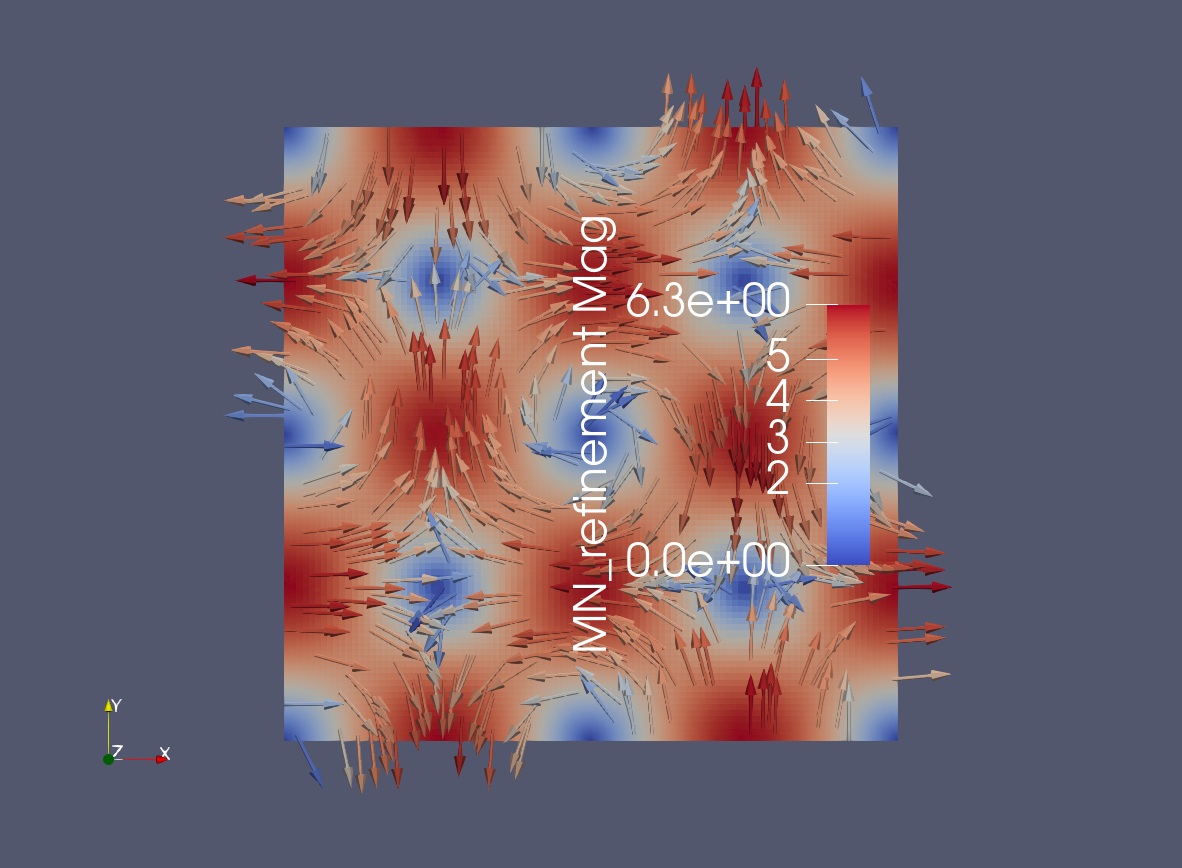}
	\hspace*{-0.1cm}    
	\includegraphics[trim={7.8cm 2.5cm 7.8cm 2.5cm},clip,width=.24\textwidth]{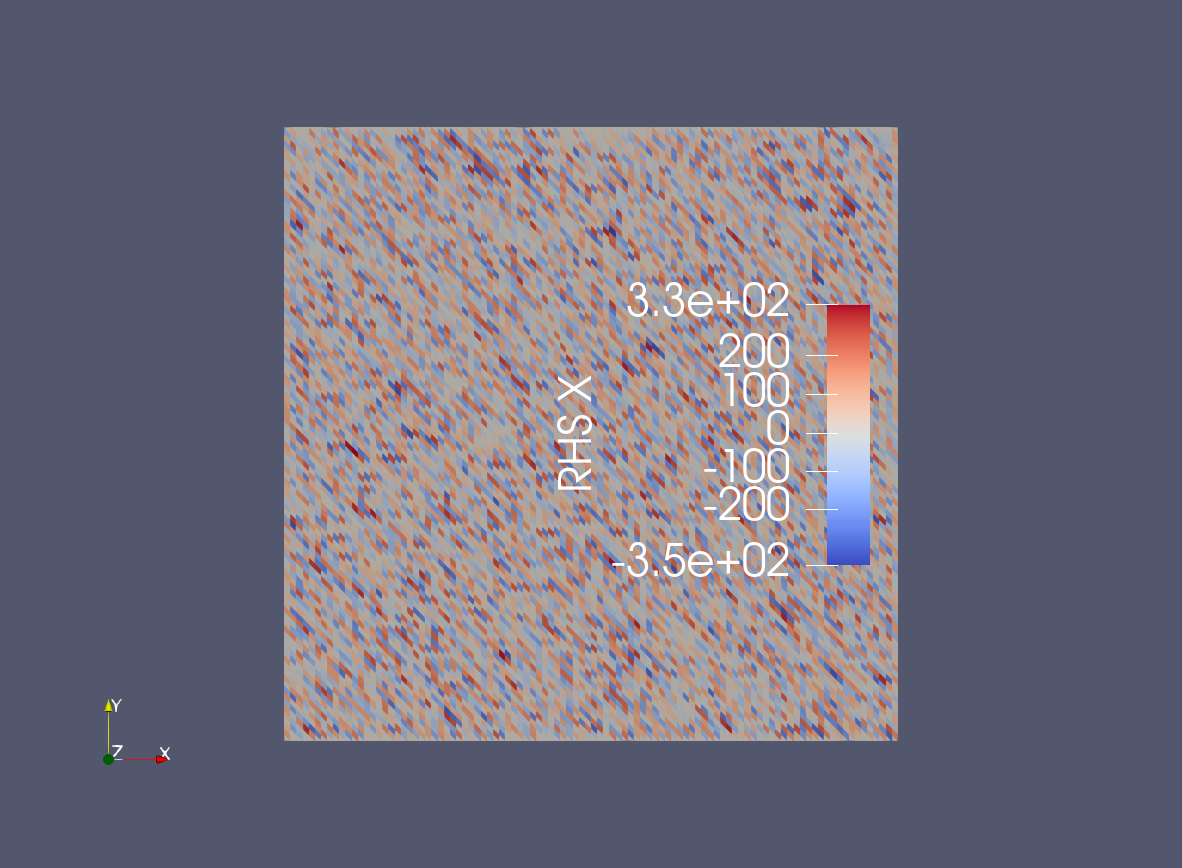}
	\includegraphics[trim={7.8cm 2.5cm 7.8cm 2.5cm},clip,width=.24\textwidth]{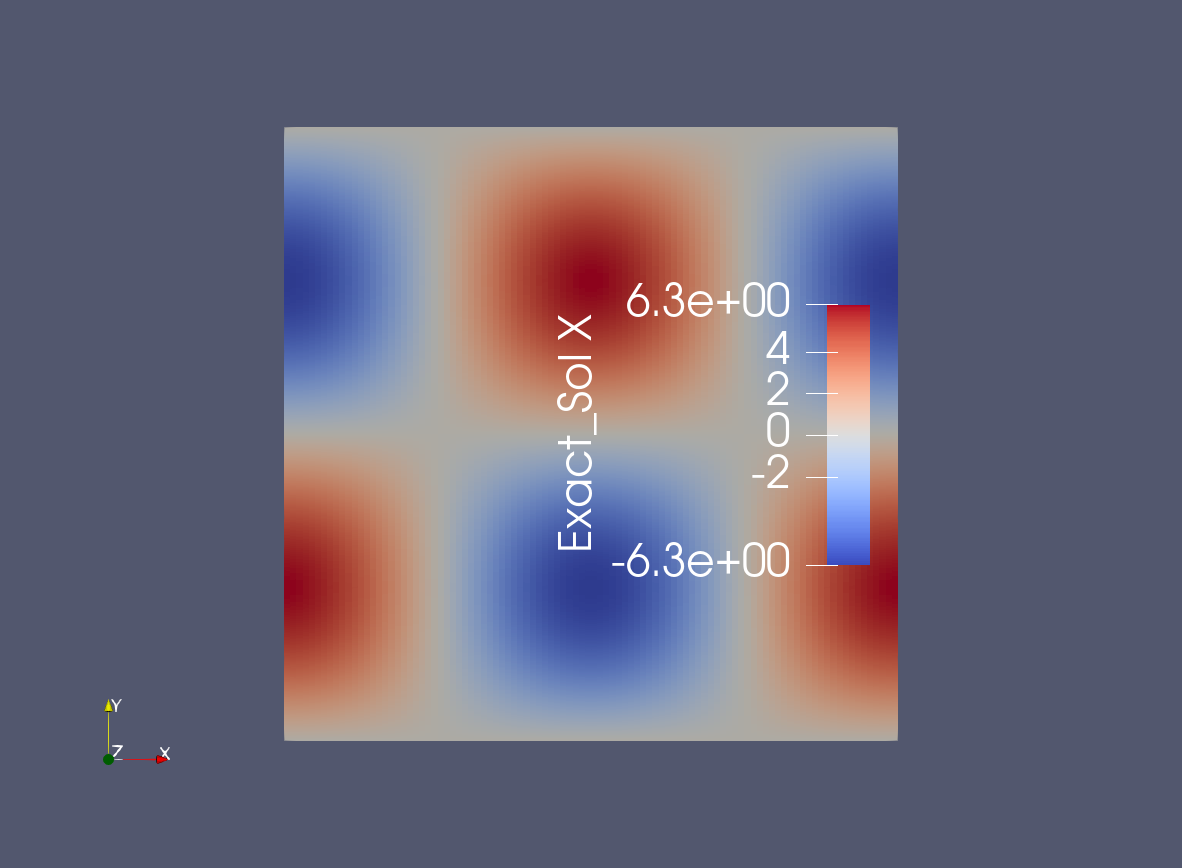}
	\includegraphics[trim={7.8cm 2.5cm 7.8cm 2.5cm},clip,width=.24\textwidth]{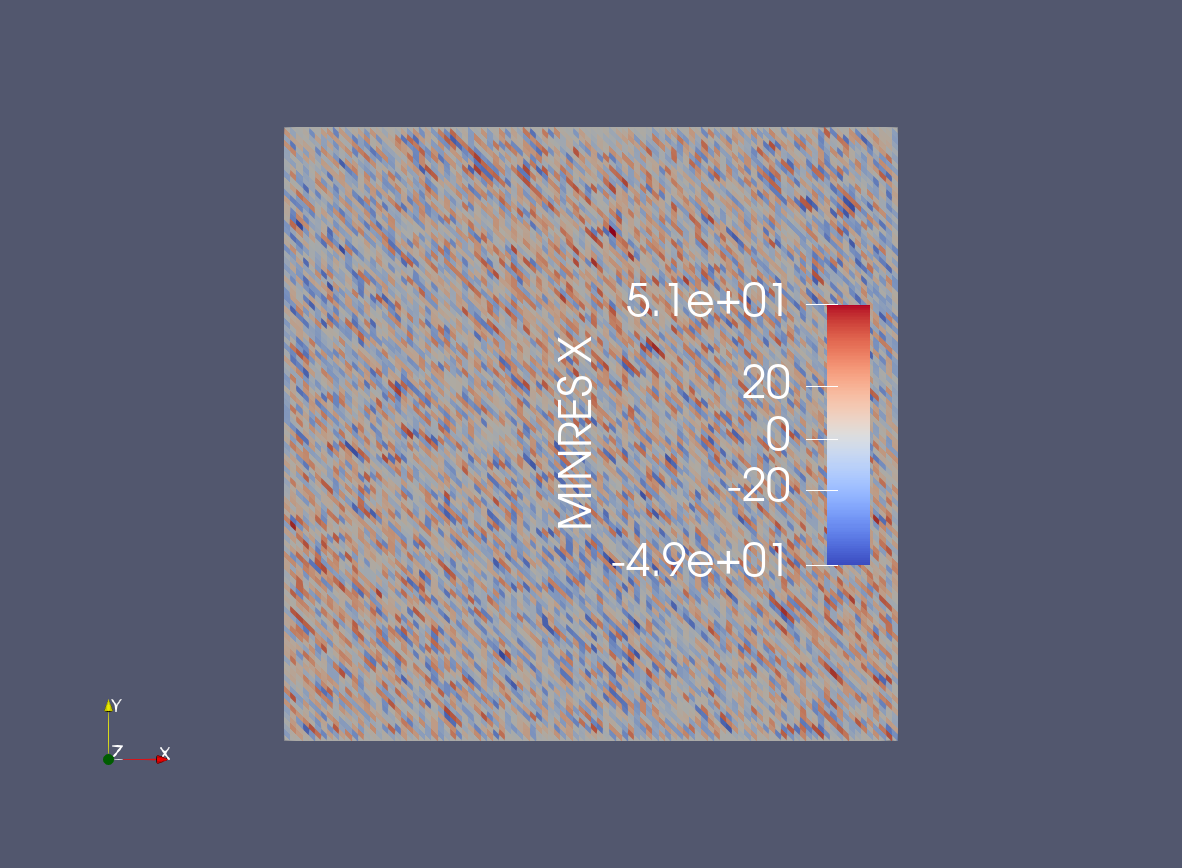}
	\includegraphics[trim={7.8cm 2.5cm 7.8cm 2.5cm},clip,width=.24\textwidth]{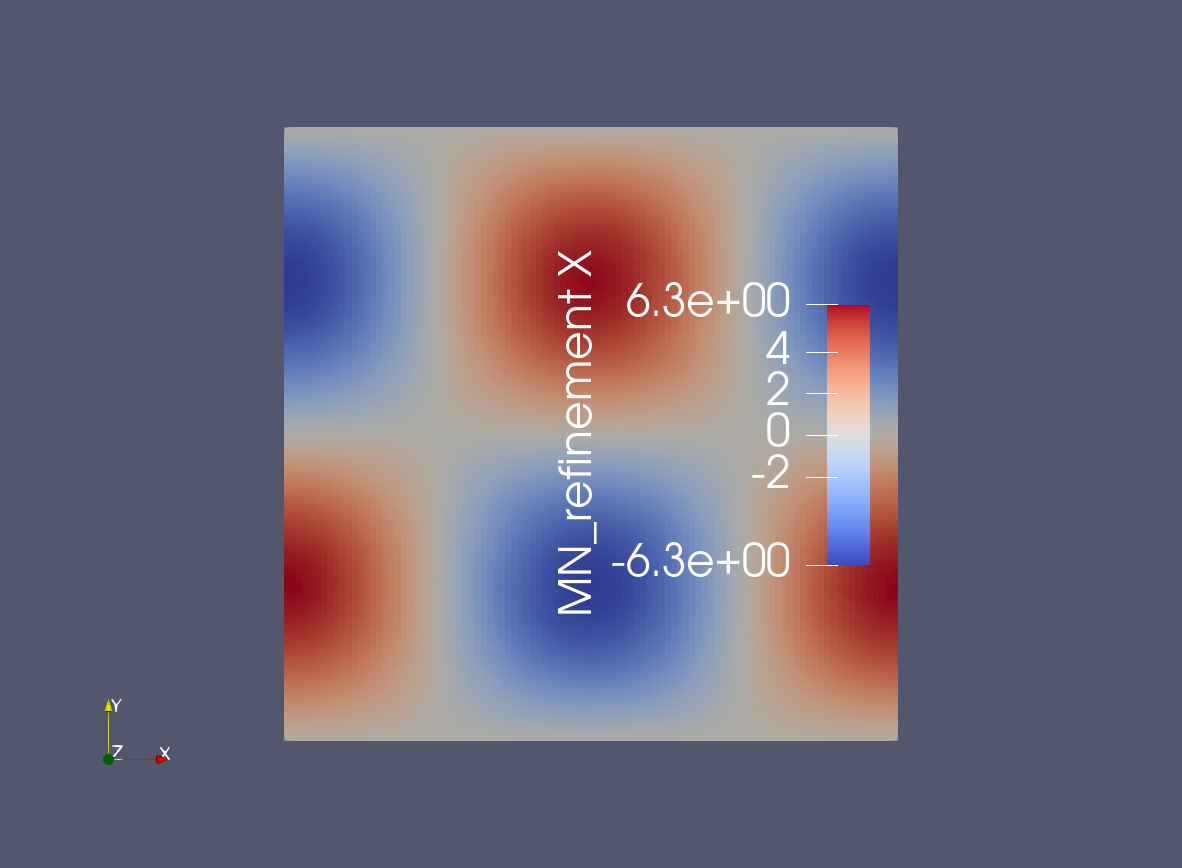}
	\hspace*{-0.02cm}    
	\includegraphics[trim={7.8cm 2.5cm 7.8cm 2.5cm},clip,width=.24\textwidth]{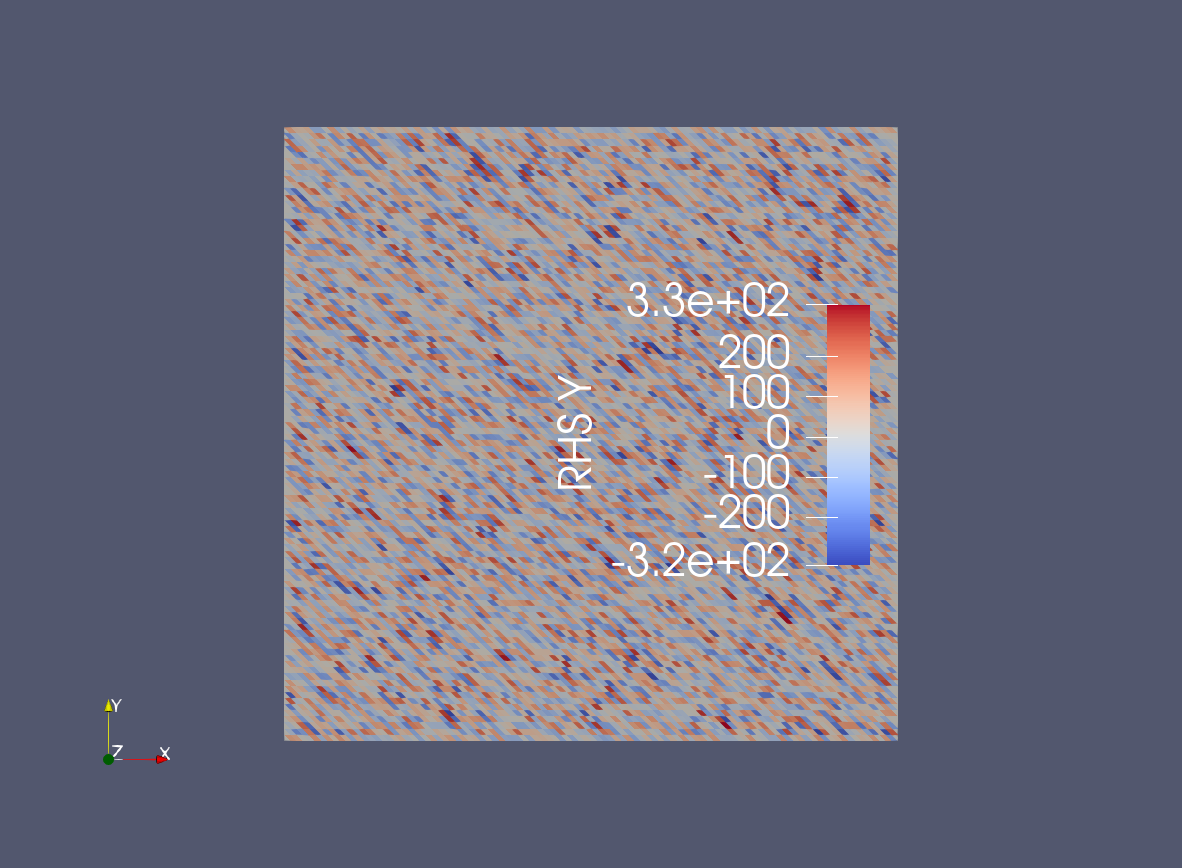}
	\includegraphics[trim={7.8cm 2.5cm 7.8cm 2.5cm},clip,width=.24\textwidth]{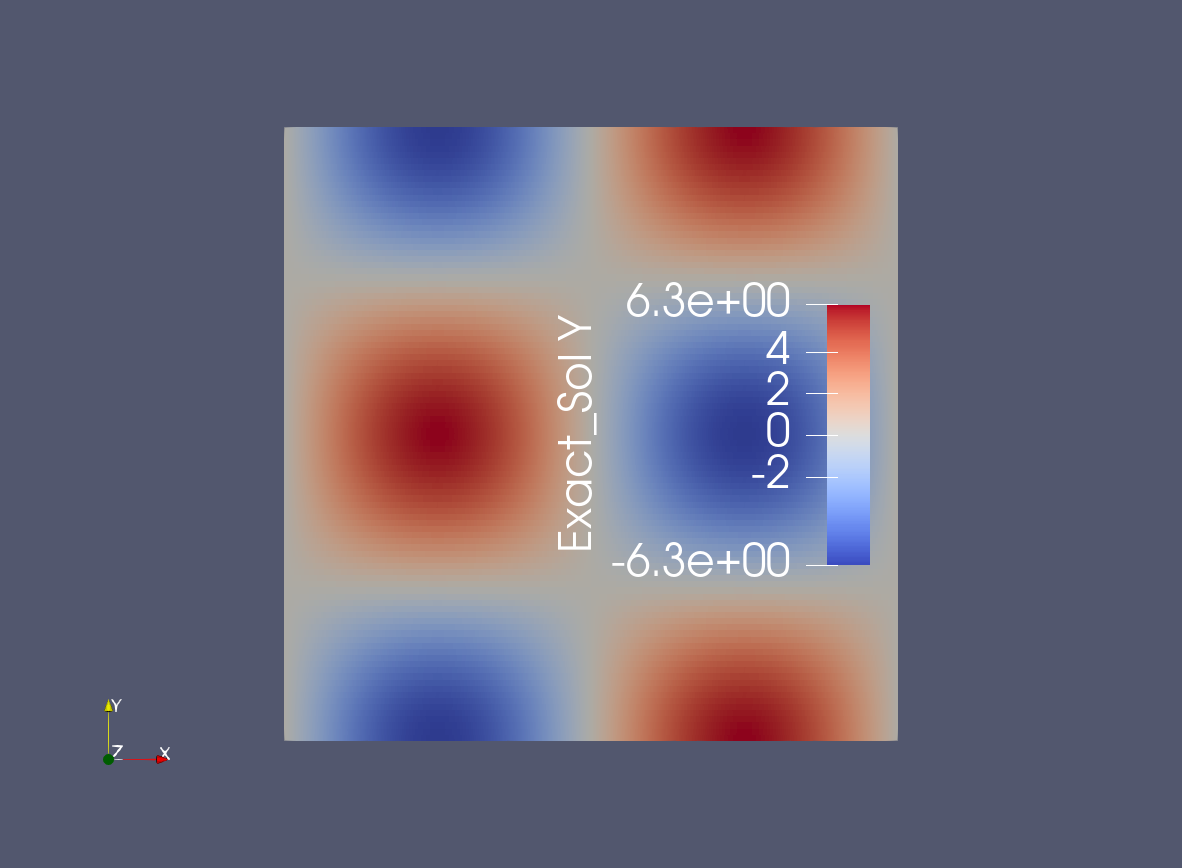}
	\includegraphics[trim={7.8cm 2.5cm 7.8cm 2.5cm},clip,width=.24\textwidth]{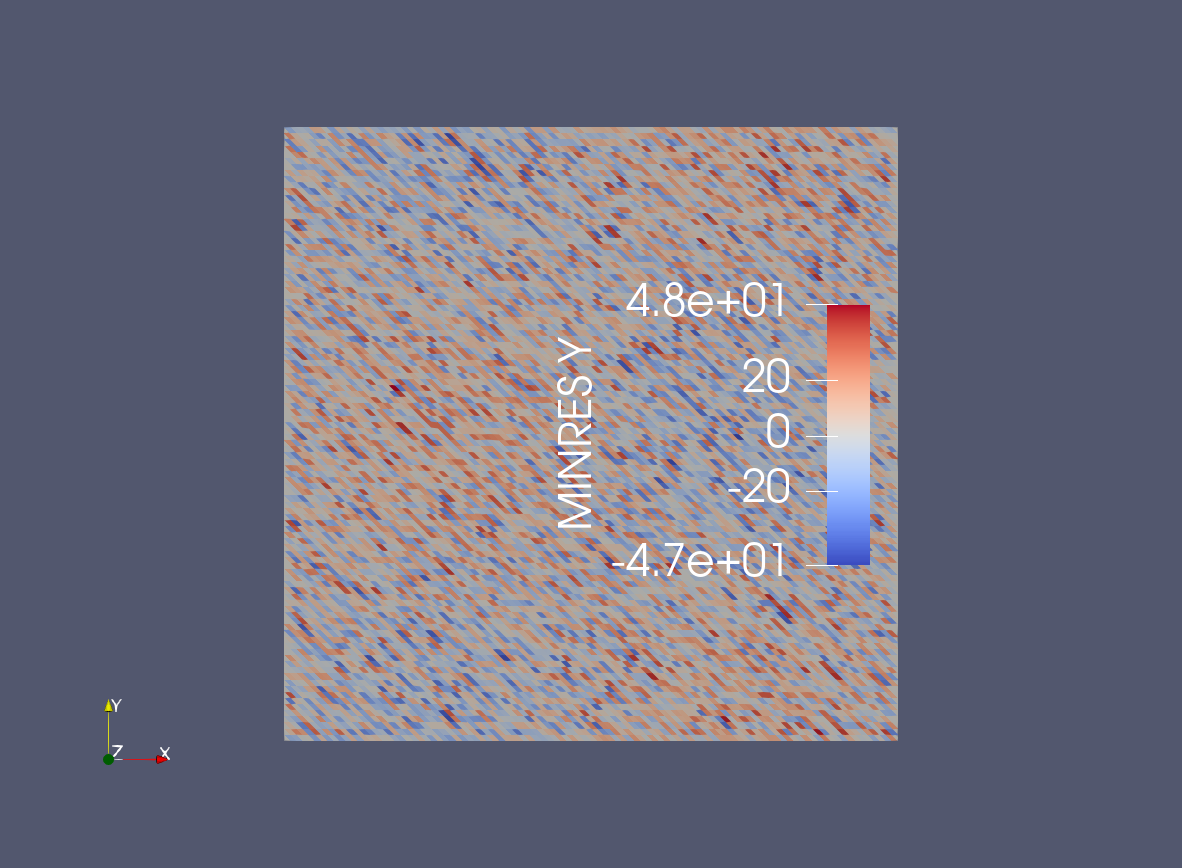}
	\includegraphics[trim={7.8cm 2.5cm 7.8cm 2.5cm},clip,width=.24\textwidth]{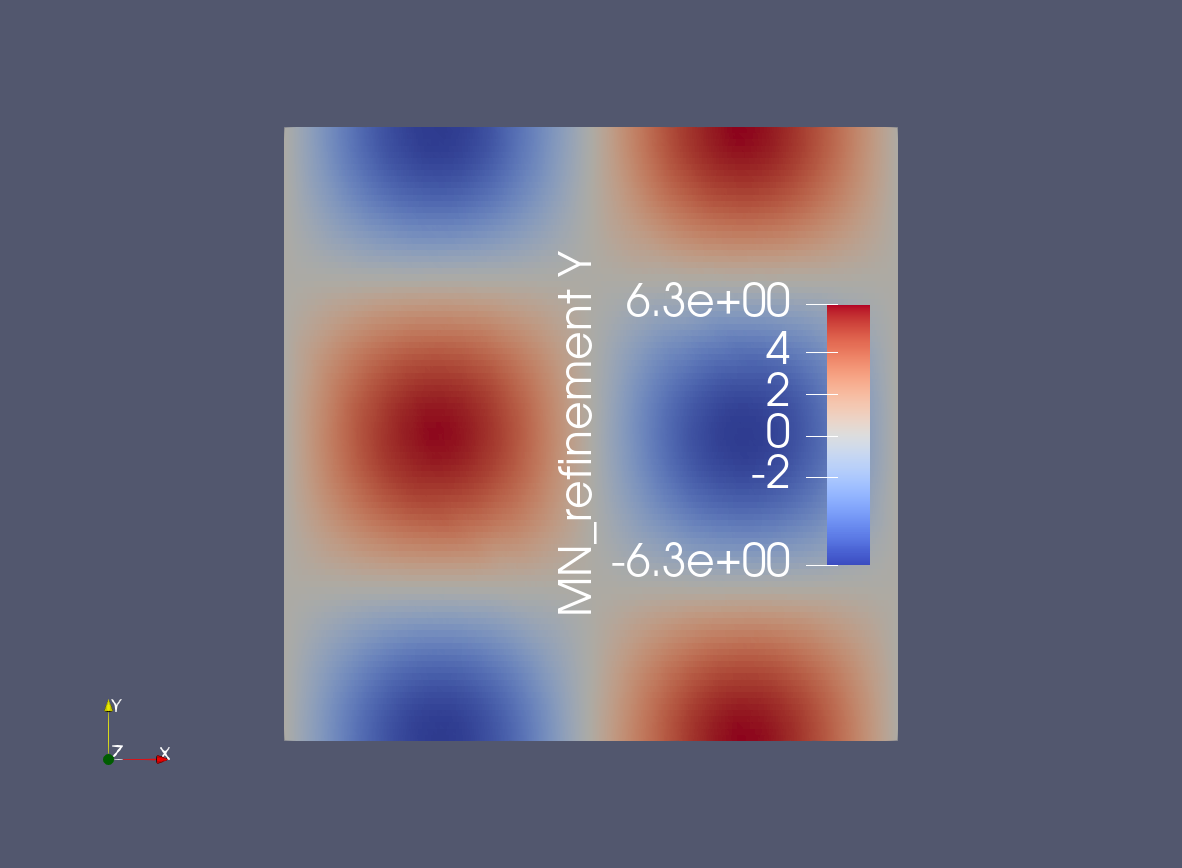}
	\caption{From top to bottom: the magnitude with orientations, the X-axis components, and the Y-axis components. From left to right: the right-hand-side $ \mathbf{f} $, the exact solution $ \mathbf{u}_{\textnormal{exact}} $, the final MINRES iterate without MN refinement, and the MN refinement result using \cref{prop:MN-refinement_t}. A recorded video demonstrating how the MN refinement results convergence to $ \mathbf{u}_{\textnormal{exact}} $ iteratively can be found at {\href{https://www.youtube.com/watch?v=ivRa-O9DCMI}{https://www.youtube.com/watch?v=ivRa-O9DCMI}}. \label{fig:curlcurl_bcs}}
\end{figure}

\subsection{Numerical solution of PDEs by MN refinement: \cref{prop:MN-refinement_t}}
\label{sec:exp:curlcurl}
We consider the following Maxwell problem \cite{mardal2011preconditioning}:
\begin{subequations}
	\label{eq:maxiwell}
	\begin{align}
		\textnormal{curl} \textnormal{ curl } \mathbf{u} &= \mathbf{f} \quad \textnormal{in } \Omega \label{eq:curlcurl} \\
		\mathbf{u} \times \mathbf{n} &= 0 \quad \textnormal{on } \partial \Omega, \label{eq:bcs}
	\end{align}
\end{subequations}
where $ \partial \Omega $ represents the boundary of $ \Omega \in \reals^2 $, and $ \mathbf{n} $ is the outwards-pointing unit normal on the boundary $ \partial \Omega $. We are interested in this problem because \cref{eq:maxiwell} contains infinitely many solutions for a general right-hand side $ \mathbf{f} $ \cite{mardal2011preconditioning}, while we aim to recover the pseudo-inverse solution among them.
We use Nédélec finite elements to discretize \cref{eq:maxiwell} with a regular triangular mesh with $ 100 $ elements along each edge, resulting in $ d = 30,200 $. The implementation\footnote{Codes are available at \href{https://github.com/yangliu-op/Curl-Curl}{https://github.com/yangliu-op/Curl-Curl}} of this experiment is using Firedrake \cite{FiredrakeUserManual} with a MINRES implementation from PETSc \cite{balay2019petsc}. 
Let $ \AA $ refer to the finite element matrix associated with the discretization of the $ \textnormal{curl} \textnormal{ curl} $ operator, that is $ \AA_{ij} = \int_{\Omega} \textnormal{ curl } \mathbf{\phi}_j \cdot \textnormal{ curl } \mathbf{\phi}_i$, where $ \{\mathbf{\phi}_j\}_{1}^{d} $, are basis functions spanning the Nédélec finite element space of the first kind \cite{nedelec1980mixed}. We construct our problem in $ \reals^2 $ with $ \mathbf{f} = \textnormal{curl} \textnormal{ curl } \mathbf{u}_{\textnormal{exact}} + \mathbf{e} $, where $ \mathbf{e} $ is the projection of uniformly distributed random noise $ \mathcal{U}(-1, 1) $ onto the null space of the operator $ \AA $. 
Let $ \rr $ denote the residual of \cref{eq:curlcurl}. We run MINRES until $ \AA \rr $ is sufficiently small. The termination condition is set to be $ \| \AA \rr \| \leq \eta \| \AA \uu \| $ with $ \eta = 0.1 $, which can be controlled via the \texttt{$ksp\_minres\_nutol$} option in PETSc. At termination, we apply \cref{prop:MN-refinement_t} to the final iterate. 
The results are gathered in \cref{fig:curlcurl_bcs}, which is generated by Paraview \cite{ahrens200536}. It is clear that the MINRES final iterate, without MN refinement, contains a great deal of noisy features. 
However, we can recover a good approximation of $ \mathbf{u}_{\textnormal{exact}} $ using \cref{prop:MN-refinement_t}.

\begin{figure}[!thbp]
	\centering
	\vspace*{-3mm}
	\hspace*{-7mm}
	\includegraphics[trim={0cm 0.5cm 0cm 0.5cm},clip,width=1\textwidth]{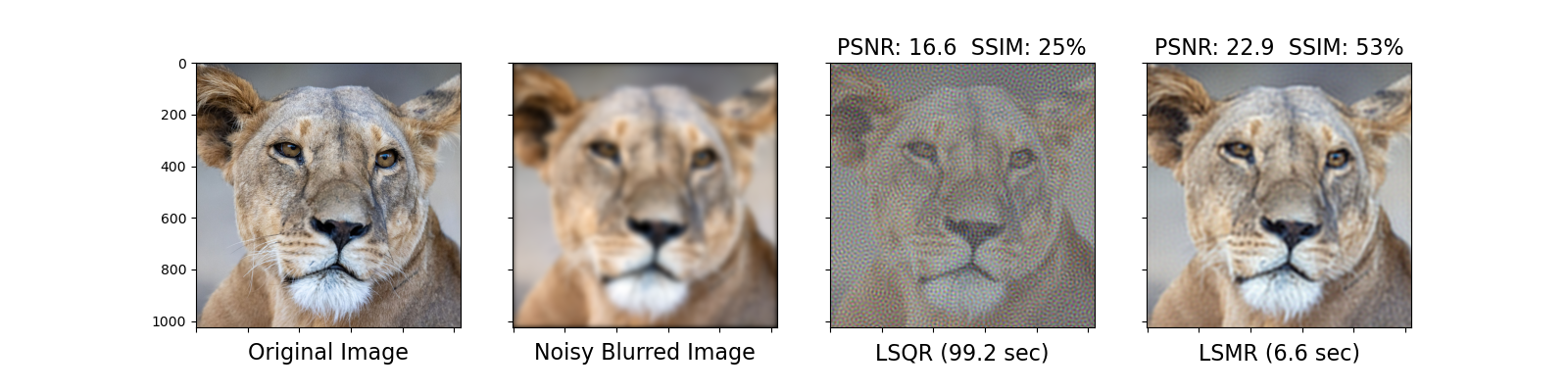}
	\hspace*{-7mm}
	\includegraphics[trim={0cm 0.5cm 0cm 0.5cm},clip,width=1\textwidth]{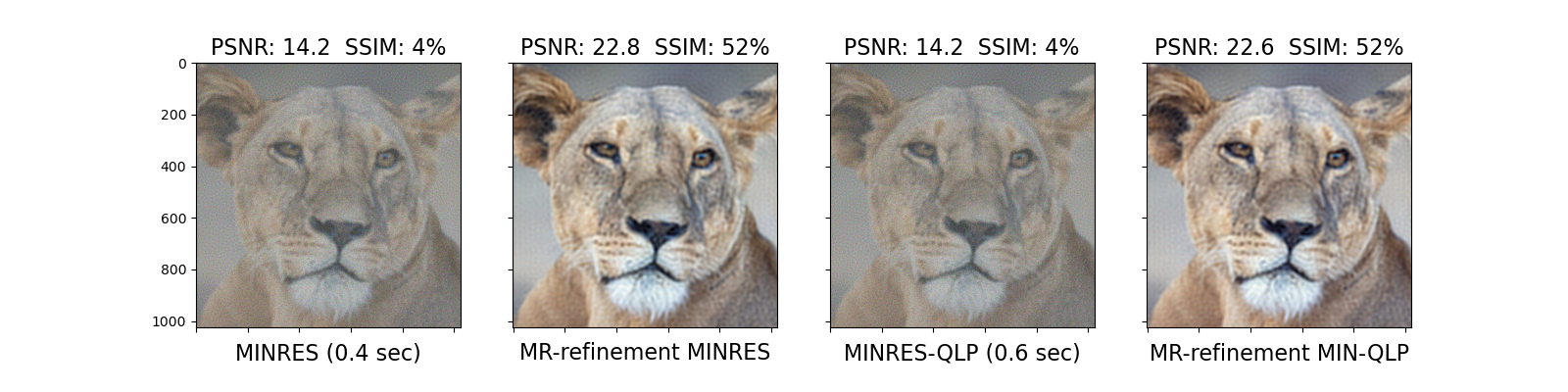}
	\caption{From the top left to the bottom right: the original image, the noisy blurred image, the image deblurred with LSQR and with LSMR; the image obtained from \cref{alg:MINRES}, and the MN refinement image from \cref{prop:MN-refinement_t}, that obtained from MINRES-QLP and its corresponding MN refinement image from \cref{prop:MN-refinement_t}. The ``wall-clock'' time shows the total required seconds for each iterative algorithm to satisfy the inexactness criterion $\| \AA \rr \| \leq 10^{-5} \| \AA \bb \|$.} \label{fig:deblurring}
\end{figure}

\subsection{Image deblurring by MN refinement: \cref{thm:MINRES_dagger,thm:MN-refinement}}
\label{sec:exp_image}
We now turn our attention to a large-scale setting in the context of image deblurring.
We emphasize that our goal here is not to obtain a cutting-edge image deblurring technique that can achieve the state-of-the-art, but rather to explore the potentials of the MN refinement in a real world application.
We consider the following image blurring model:
\begin{align*}
	\BB = \sA(\XX) + \EE,
\end{align*}
where $ \BB \in \real^{n \times n} $ is a noisy and blurred version of an image $ \XX \in \real^{n \times n}$, $ \sA: \real^{n \times n} \to \real^{n \times n} $ represents the linear Gaussian smoothing/blurring operator \cite{shapiro2001computer} as a 2D convolutional operator, and $\EE$ is some noise matrix. Simple deblurring can be done by solving the least-squares problem \cref{eq:least_squares}, where $ \xx = \vect(\XX)\in \real^{n^{2}}  $, $ \bb = \vect(\BB)\in \real^{n^{2}} $, and $ \AA \in \real^{n^{2} \times n^{2}} $ is the real symmetric matrix representation of the Gaussian blur operator $\sA $. Though $ \AA $ has typically full rank, it often contains several small singular values, which make it almost numerically singular. In lieu of employing any particular explicit regularization, we terminate the iterations early before fitting to the ``noisy subspaces'' corresponding to the extremely small singular values. Our aim is to investigate the proposed MN refinements, with or without preconditioning, for deblurring images. Specifically, we investigate the application of \cref{eq:MN-refinement_t,eq:P_MN-refinement_t}. 

When $ n $ is large, it is not practical to store $ \AA $ explicitly. To remedy this, we generate a Toeplitz matrix $ \ZZ \in \real^{n \times n} $, from which we can implicitly define the blurring matrix as $ \AA = \ZZ \otimes \ZZ $, where $ \otimes $ denotes the Kronecker product. 
For our experiments\footnote{Codes are available at \href{https://github.com/yangliu-op/Image-Debluring}{https://github.com/yangliu-op/Image-Debluring}}, we consider a colored image with $ n = 1,024 $, which leads to a large-scaled deblurring problem with $ d = n^2 $ over a million. We solve for each channel separately and normalize our final result to regenerate the color image. The Gaussian blurring matrix $ \AA $ is generated with bandwidth $ 100 $ and standard deviation $ 10 $. For each color channel, the elements of the corresponding noise matrix are generated i.i.d$.$ from the standard normal distribution. Finally, in our experiments, since $ n\gg1 $, we apply \cref{eq:MN-refinement_t} with \cref{alg:sub_pMINRES} as an equivalent alternative to applying \cref{eq:P_MN-refinement_t} with \cref{alg:pMINRES}.

\begin{figure}[!thbp]
	\centering
	\vspace*{-3mm}
	\hspace*{-7mm}
	\includegraphics[width=1\textwidth]{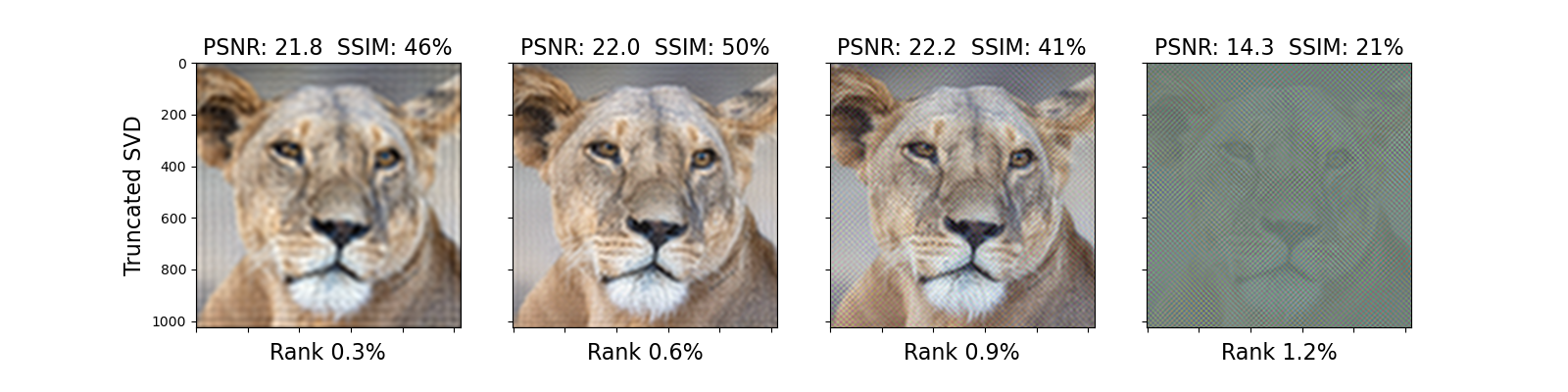}
	\vspace*{-3mm}
	\caption{Image deblurring with Truncated SVD. ``Rank $a$\%'', ``$a$\%'' represents the rank-ratio $ r^2/n^2 $, where $ r $ represents the rank being used in the Truncated SVD method. \label{fig:Truncated_deblurring}}
\end{figure}

\subsubsection*{Deblurring with \cref{alg:MINRES} and \cref{prop:MN-refinement_t}}
In our experiments, we first apply \cref{alg:MINRES}, and evaluate the quality of the deblurred image from the MINRES solution as well as the MN refinement one obtained from \cref{prop:MN-refinement_t}.
%
As a benchmark, we consider comparing the solutions obtained from LSQR \cite{paige1982lsqr}, which has proven to enjoy favorable stability properties for solving large-scale least-squares problems \cite{wathen2022some}, as well as the standard truncated SVD. Both these methods, though not the state-of-the-art, have long been used for image deblurring problems \cite{hansen1998,hansen2006deblurring}. We also consider LSMR \cite{fong2011lsmr} as an alternative general-purpose solver, as well as MINRES-QLP, which is specifically designed for solving (almost) singular systems \cite{choi2011minres}. Note that every iteration of LSQR and LSMR requires two matrix-vector multiplications, whereas MINRES and MINRES-QLP involve only one such operation per iteration. 

This experiment was conducted using an Nvidia Geforce RTX 4090. In our implementations, we set the maximum iterations for all solvers to $ 20,000 $. Since the problem is highly ill-conditioned, instead of tracking the residual norm, we monitor the relative residual of the normal equation as $\| \AA \rr \| \leq 10^{-5} \| \AA \bb \|$. At termination of MINRES and MINRES-QLP, we apply the MN refinement procedure from \cref{prop:MN-refinement_t} to eliminate the contributions from the noisy subspaces corresponding to extremely small singular values of $ \AA $. 
To evaluate the quality of the reconstructed images, we use two common metrics, namely the peak-signal-to-noise ratio (PSNR), and the structural similarity index measure (SSIM) \cite{hore2010image,wang2004image}. 
Deblurred images with higher value in either metric are deemed to be of relatively better quality.

The results are depicted in \cref{fig:deblurring,fig:Truncated_deblurring}. From \cref{fig:deblurring}, it is clear that, LSQR not only returns a very poor solution, but it also takes the longest to terminate. Without MN refinement, the reconstructed image using LSMR is of higher quality compared with the ones directly obtained from \cref{alg:MINRES} and MINRES-QLP, albeit with a significantly more computation time. However, when the MN refinement is applied as outlined in \cref{prop:MN-refinement_t}, the image quality from \cref{alg:MINRES} and MINRES-QLP drastically improves, becoming highly competitive with that of LSMR, while requiring a fraction of the runtime. Although MINRES-QLP is guaranteed to recover the pseudo-inverse solution at $ t=g $, it is clear that in the case of early termination and without MN refinement, it fails to recover a good quality solution just like MINRES; see also \cref{rem:minres-minresqlp}. 
We also apply truncated SVD \cite{hansen2006deblurring} to give another benchmark. We can see from \cref{fig:Truncated_deblurring} that the performance of the truncated SVD is reasonable when the singular value threshold increases. However, it is also clear that the method is highly sensitive to the choice of this threshold.

\begin{figure}[!t]
	\centering
	\vspace*{-3mm}
	\hspace*{-7mm}
	\includegraphics[width=1\textwidth]{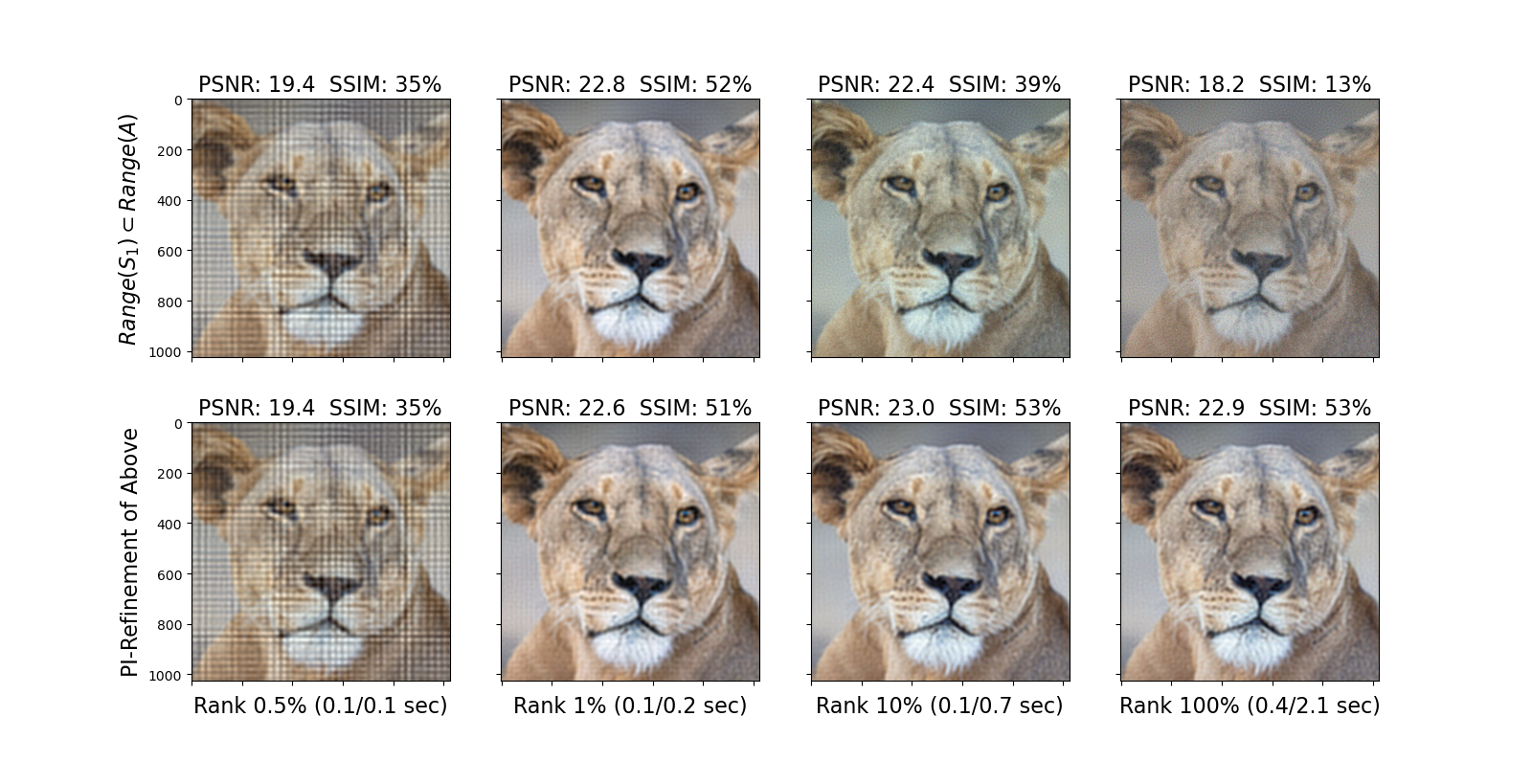}
	\vspace*{-3mm}
	\caption{Image deblurring using \cref{alg:sub_pMINRES} with $ \SS_1 \in \real^{n^2 \times r_1^2} $. In our naming convention, ``Rank $a$\% ($b$/$c$ sec)'', ``$a$\%'' represents the rank-ratio $ r_1^2/n^2 $, ``$c$'' is total wall-clock time of the full deblurring process (including the incomplete QR decomposition to compute $ \SS_1 $), and ``$b$'' shows the time taken by \cref{alg:sub_pMINRES} only. The figures in the bottom row are the MN refinement versions of ones in the top row. \label{fig:PMR_range_deblurring}}
\end{figure}

\begin{figure}[!t]
	\centering
	\vspace*{-3mm}
	\hspace*{-7mm}
	\includegraphics[width=1\textwidth]{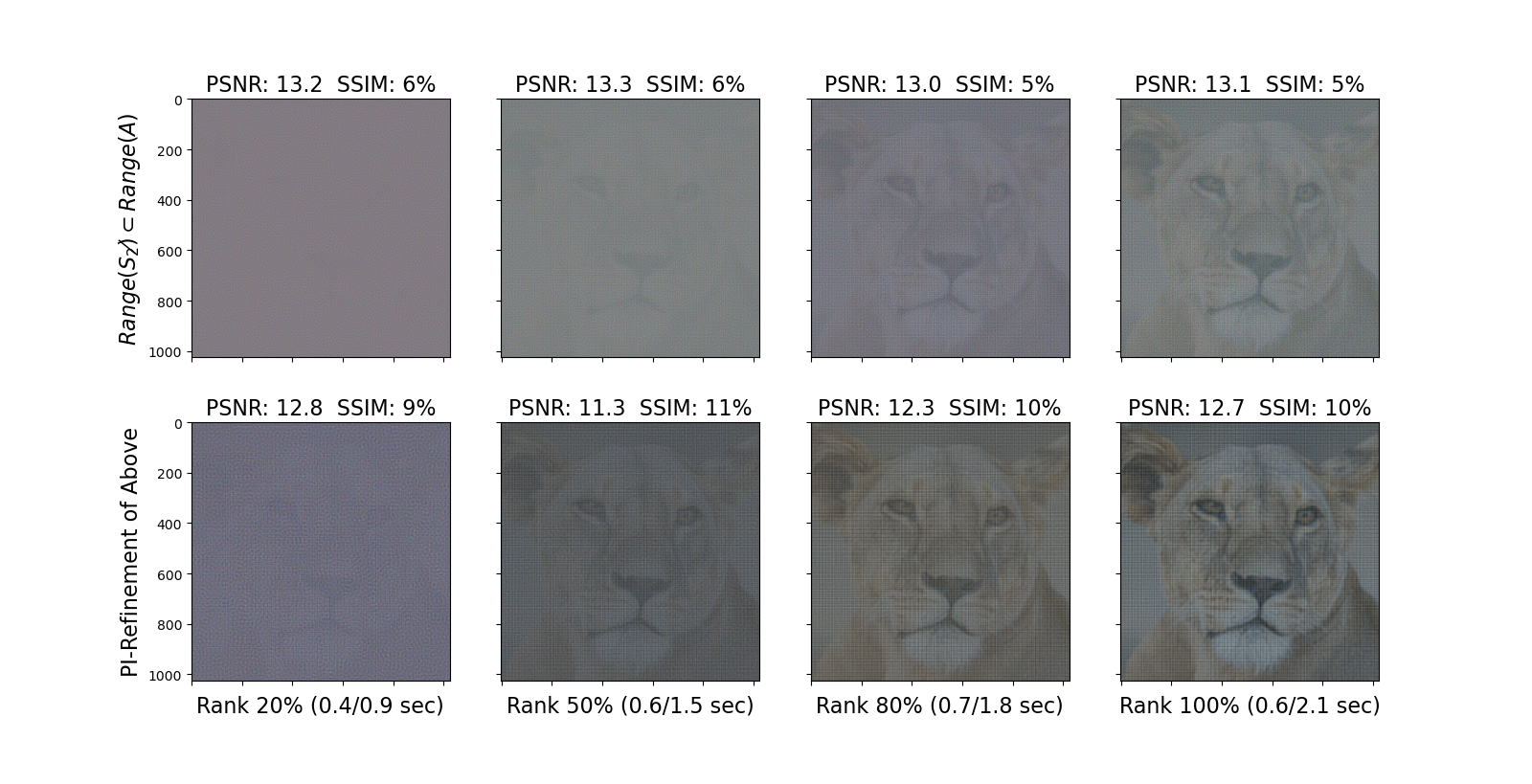}
	\vspace*{-3mm}
	\caption{Image deblurring using \cref{alg:sub_pMINRES} with $ \SS_2 \in \real^{n^2 \times r_2^2}  $. In our naming convention, ``Rank $a$\% ($b$/$c$ sec)'', ``$a$\%'' represents the rank-ratio $ r_2^2/n^2 $, ``$c$'' is total wall-clock time of the full deblurring process (including the incomplete QR decomposition to compute $ \SS_2 $), and ``$b$'' shows the time taken by \cref{alg:sub_pMINRES} only. The figures in the bottom row are the MN refinement versions of ones in the top row. \label{fig:PMR_deblurring}}
\end{figure}

\subsubsection*{Deblurring with \cref{alg:sub_pMINRES} and \cref{eq:P_MN-refinement_t}}
We now consider incorporating preconditioning withing our image deblurring problem. Recall that preconditioning has long been used in this context \cite{dell2017structure,chen2016preconditioning,hansen2006deblurring,bianchi2019structure}.  
To fit our settings here, we consider appropriate singular preconditioners $ \MM \succeq \zero $,  and explore the deblurring quality of the MINRES solution of \cref{alg:sub_pMINRES}, as well as the MN refinement solution by applying \cref{prop:MN-refinement_t} to the preconditioned system \cref{eq:preq_MINRES}.

We construct two different types of sub-preconditioners. For $i =1,2 $, we let $ \SS_i = \CC_i \otimes \CC_i \in \real^{n^2 \times r_i^2}, \; i=1,2 $, where $ \CC_i = \QQ_i \SIGMA_i \in \real^{n \times r_i}, \; i =1 ,2 $ and the factors $ \SIGMA_i $, $ \QQ_i $ and the corresponding ranks $ r_i $ are generated as follows:
\begin{enumerate}
	\item To construct $ \QQ_{i} $, we first generate a $ n \times n $ matrix $ \hCC $ whose elements are drawn independently from the standard normal distribution. We then choose $ \QQ_1 \in \real^{n \times r_1} $ and $ \QQ_2 \in \real^{n \times r_2} $ via the incomplete QR decomposition of $ \ZZ \hCC = \QQ_1 \RR_1 $ and $ \hCC = \QQ_2 \RR_2 $ with the corresponding ranks $ r_1 $ and $ r_2 $.
	\item We construct the diagonal matrices $ \SIGMA_i \in \real^{r_i \times r_i}, \; i =1,2 $ with positive diagonal entries such that the smallest and the largest diagonal elements are, respectively $ 1 $ and $ 2 $, and other elements are linearly spaced values in-between.
\end{enumerate}

By construction, $ \rg{\SS_1} \subseteq \rg{\AA} $, while this is not true for $ \SS_2 $. 

Considering $ \SS_2 $ as an example, we use the following Kronecker product properties to avoid explicit storage of the matrix $ \AA $: 
\begin{align*}
	\tAA \vect(\tXX) &= \SST_2 \AA \SS_2 \vect(\tXX) = \left(\CC_2^{\T} \otimes \CC_2^{\T}\right) \left(\ZZ \otimes \ZZ\right) \left(\CC_2 \otimes \CC_2\right) \vect(\tXX) \\
	&= \left(\left(\CC_2^{\T} \ZZ \CC_2\right) \otimes \left(\CC_2^{\T} \ZZ \CC_2 \right)\right) \vect(\tXX) = \vect(\CC_2^{\T} \ZZ \CC_2 \tXX \CC_2^{\T} \ZZ \CC_2), \\
	\tbb &= \SST_2 \vect(\BB) = \left(\CC_2^{\T} \otimes \CC_2^{\T}\right) \vect(\BB) = \vect(\CC_2^{\T} \BB \CC_2) ,
\end{align*}
and $\xx = \vect(\XX) = \SS_2 \vect(\tXX) = \left(\CC_2 \otimes \CC_2\right) \vect(\tXX) = \vect(\CC_2 \tXX \CC_2^{\T})$.

\cref{fig:PMR_range_deblurring} depicts the reconstruction quality using the sub-preconditioner $ \SS_1 $, where for rank-ratios larger than $ 1\% $, we obtain reasonable quality results. The effect of the MN refinement from \cref{thm:MN-refinement} is also clearly visible, in particular for higher values of the rank-ratio. Notably, the running time of \cref{alg:sub_pMINRES} with $ \SS_1 $ is a small portion of the total deblurring time, including the QR decomposition to construct $ \SS_1 $. This is a direct consequence of the dimensionality reduction nature of \cref{alg:sub_pMINRES}, which employs the sub-preconditioner $ \SS_1 $, as opposed to the full preconditioner $ \MM $. Also, from \cref{fig:PMR_range_deblurring} it is clear that \cref{alg:sub_pMINRES} exhibits a great degree of stability with respect to the different choices of the rank for the sub-preconditioner $ \SS_1 $, as compared with the truncated SVD in \cref{fig:Truncated_deblurring}.
In \cref{fig:PMR_deblurring}, however, we see poor quality reconstructions. This is entirely due to the fact that, by construction, $ \rg{\SS_2} $ does not align well with the range space of $ \AA $, so that $ \SS_2 $ constitutes a poor sub-preconditioner. Hence, when the underlying matrix is (nearly) singular and $ \bb $ substantially aligns with the eigen-space corresponding to small eigenvalues of $\AA$, one should apply preconditioners that can counteract the negative effects of such alignments. 
This observation underlines the fact that, unlike the typical settings where the preconditioning matrix is assumed to have full-rank, the situation can be drastically different with singular preconditioners and one can face a multitude of challenges. In this respect, although singular preconditioners can be regarded as a way to reduce the dimensionality of the problem, more studies need to be done to explore various effects and properties of such preconditioners in various contexts.

\begin{remark}
	\label{rem:minres-minresqlp}
	From \cref{fig:deblurring} and \cref{table:average_iterations}, we see remarkable similarities between the final results of MINRES-QLP and MINRES. This warrants a further look into the dynamic of MINRES-QLP across all its iterations, as compared with MINRES. For this, we consider using both MINRES and MINRES-QLP for solving the same synthetic problem from \cref{sec:exp:MN-refinement} but with a symmetric matrix $ \AA $. From \cref{fig:pseudo_QLP}, we see that the iterates from both these methods are almost identical throughout the life of the algorithms, except at the very last iteration, when $ t = g = 16 $. This indicates that all the  extra computations within MINRES-QLP at every iteration might not in fact contribute to an improved solution and it is primarily the last iteration that brings about the intended effect of obtaining the pseudo-inverse solution.
\end{remark}

\begin{table}[!htbp]
	\caption{Average number of iterations before termination for three channels using the listed solvers for the experiments of \cref{fig:deblurring,fig:PMR_deblurring,fig:PMR_range_deblurring}. ``MIN-QLP'' refers to MINRES-QLP and the ``percentage values'' refer to the corresponding rank-ratio defined in \cref{fig:PMR_deblurring,fig:PMR_range_deblurring}.}
	\centering
	\scalebox{.8}{
		\begin{tabular}{ |c|c|c|c|c|c|c|c|c|c|c|c| }
			\hline
			\multicolumn{4}{|c|}{\centering \cref{fig:deblurring}}
			& \multicolumn{4}{c|}{\centering \cref{fig:PMR_range_deblurring}} & \multicolumn{4}{c|}{\centering \cref{fig:PMR_deblurring}} \\ 
			\hline 
			LSQR & LSMR & MINRES & MIN-QLP & 0.5\% & 1\% & 10\% & 100\% & 20\% & 50\% & 80\% & 100\% \\ [0.5ex]
			\hline
			3931.7 & 242.7 & 30 & 30 & 136.0 & 71.3 & 52.0 & 43.7 & 40.7 & 38.7 & 30.7 & 30 \\ [0.5ex]
			\hline
	\end{tabular}}
	\label{table:average_iterations}
\end{table}

\begin{figure}[!thbp]
	\centering
	\subfigure[Plain Iterates]{
		\includegraphics[scale = 0.38]{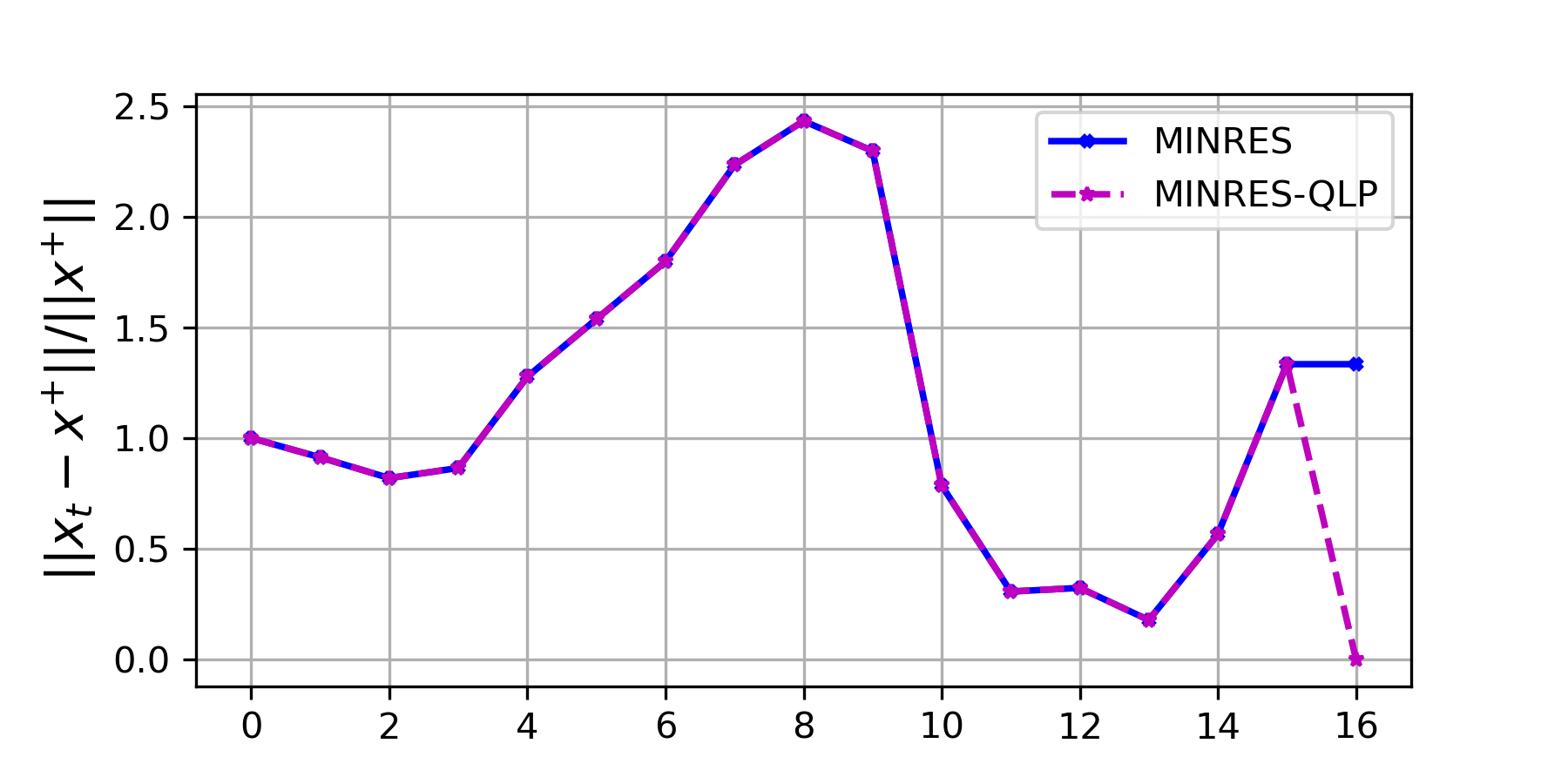}}
	\subfigure[MN refinement Iterates]{
		\includegraphics[scale = 0.38]{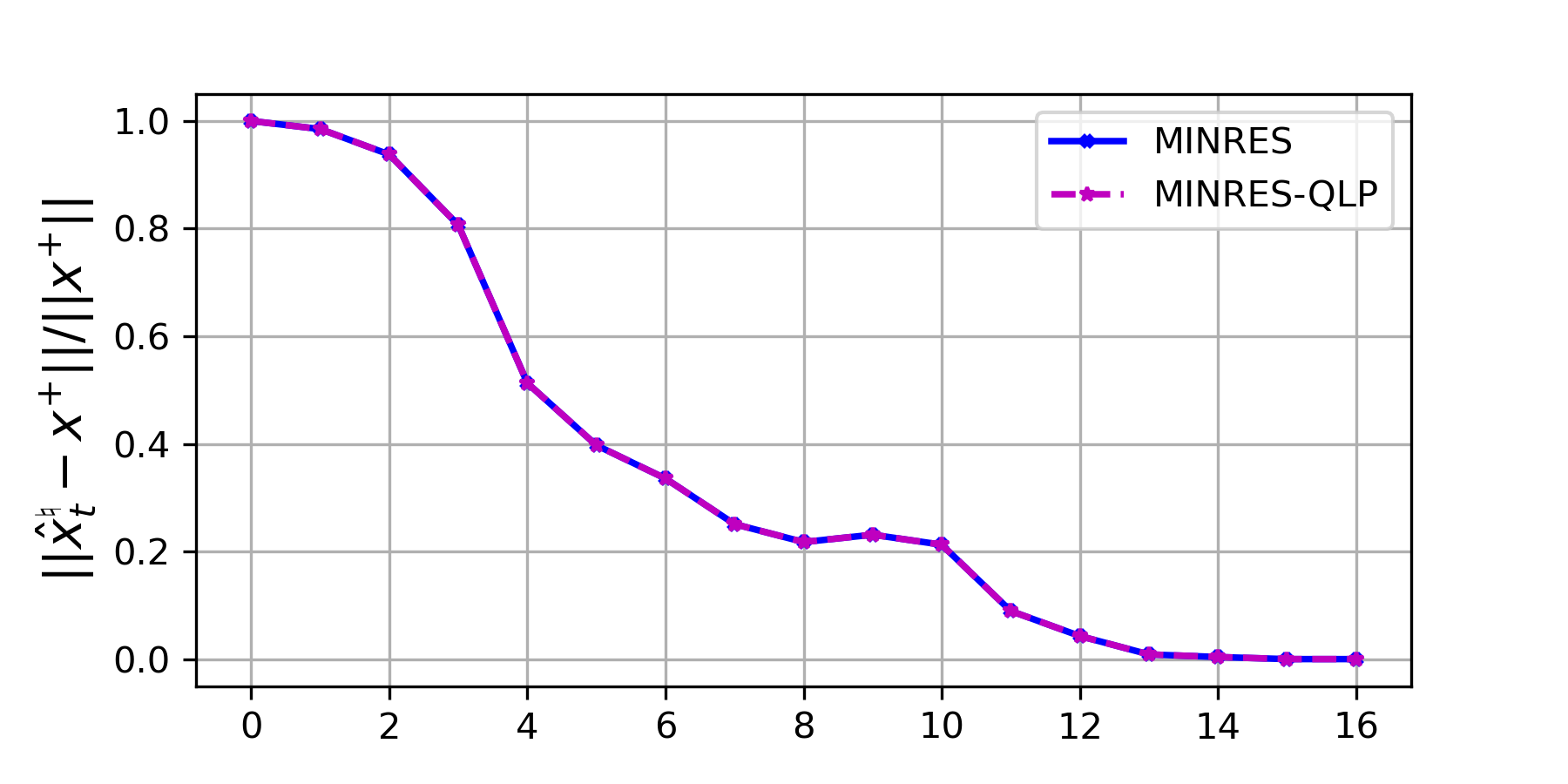}}
	\caption{Comparison between MINRES and MINRES-QLP, with and without MN refinement \cref{prop:MN-refinement_t}. Relative error of the plain (a) and the MN refinement (b) iterates of \cref{alg:MINRES} (sold blue) and MINRES-QLP (dashed magenta) applied to \cref{eq:least_squares} with Symmetric matrix. The relative error is computed with respect to the pseudo-inverse solution $ \xxd = \AAd \bb $. The $x$-axis denotes the iteration counter. It is clear that dynamics of MINRES and MINRES-QLP are almost identical except the very last iteration. However, the MN refinement iterates using \cref{prop:MN-refinement_t} look almost indistinguishable. \label{fig:pseudo_QLP}}
\end{figure}

\section{Conclusions}
\label{sec:conclusion}
We considered the minimum residual method (MINRES) for solving Hermitian linear least-squares problems involving singular matrices. It is well-known that, unless the right hand-side vector entirely lies in the range-space of the matrix, MINRES does not necessarily converge to the pseudo-inverse solution of the problem. We propose a novel MN refinement, which can readily recover such a pseudo-inverse solution from the final iterate of MINRES. We provide similar results in the context of preconditioned MINRES using positive semi-definite and singular preconditioners. We also showed that, when such singular preconditioners are formed from sub-preconditioners, we can equivalently reformulate the original preconditioned MINRES algorithm in lower dimensions, which can be solved more efficiently. We provide several numerical examples to further shed light on our theoretical results and to explore their use-cases in some real world application. 

\section*{Acknowledgments}
\noindent We express our sincere gratitude to Pablo Brubeck Martinez for his insightful and valuable discussions on the application of MINRES in solving Maxwell problems. We would like to thank the Associate Editor and the two anonymous reviewers for their detailed and constructive comments, which have helped greatly to improve the quality and presentation of the manuscript. Finally, we extend our deep gratitude to Michael Saunders, whose invaluable suggestions were instrumental in further refinements of this paper. 


        \appendix
	\section{MINRES for complex-symmetric systems}
\label{app:CSMINRES}
\subsubsection*{Saunders process}
Recall the Saunders process \cite{choi2013minimal,saunders1988two} and the Saunders subspace \cref{eq:saunders}. 
For complex-symmetric systems, after $ t $ iterations of the Saunders process, and in the absence of round-off errors, the Saunders vectors form an orthogonal matrix $ \VV_{t+1} = [ \vv_1, \vv_2, \dots, \vv_{t+1} ] \in \comp^{\dn \times (t+1)} $ whose columns span $ \mathcal{S}_{t+1}(\AA, \bb) $ and satisfy the relation $ \AA \bVV_{t} = \VV_{t+1} \hTT_{t} $. Here, $ \hTT_{t} $ is a tridiagonal upper-Hessenberg matrix of the form
\begin{align}
	\label{eq:tridiagonal_T}
	\hTT_{t} &= 
	\begin{bmatrix}
		\alpha_1 & \beta_2 & & & \\
		\beta_2 & \alpha_2 & \beta_3 & & \\
		& \beta_3 & \alpha_3 & \ddots & \\
		& & \ddots & \ddots & \beta_{t} \\
		& & & \beta_{t} & \alpha_{t} \\
		\hdashline
		& & & & \beta_{t+1} \\
	\end{bmatrix}
	\triangleq
	\begin{bmatrix}
		\TT_{t} \\
		\beta_{t+1}\ee^\T_{t} 
	\end{bmatrix} \in \comp^{(t+1) \times t},
\end{align}
where $ \TTt^{\T} = \TTt = \VVtH \AA \bVVt = \VVtH \UU \SIGMA \UUT \bVVt \in \comp^{t \times t} $ is complex-symmetric. 
Subsequently, we get the three-term recursion
\begin{align}
	\label{eq:lanczos_CS}
	\AA \bvv_{t} = \beta_{t} \vv_{t-1} + \alpha_{t} \vv_{t} + \beta_{t+1} \vv_{t+1}, \quad t \geq 1,
\end{align}
where $ \alpha_{t} $ can be computed as $ \alpha_{t} = \dotprod{\vv_{t}, \AA \bvv_{t}} $, and $ \beta_{t+1} > 0 $ is chosen by enforcing that $ \vv_{t+1} $ is a unit vector for all $ 1 \leq t \leq g - 1 $, i.e., $ \beta_{t+1} = \vnorm{\beta_{t+1} \vv_{t+1}} $. Letting $ \xx_{t} = \bVV_{t} \yy_{t} $ for some $ \yy_{t} \in \comp^{t} $, it follows that the residual $ \rr_{t} $ can be written as 
\begin{align*}
	\rr_{t} = \bb - \AA \xx_{t} = \bb - \AA \bVV_{t} \yy_{t} = \bb - \VV_{t+1} \hTT_{t} \yy_{t} = \VV_{t+1}(\beta_1 \ee_1 - \hTT_{t} \yy_{t}),
\end{align*}
where $ \beta_{1} = \| \bb \| $, which yields the sub-problems of complex-symmetric MINRES,
\begin{align}
	\label{eq:MINRES_sub}
	\min_{\yy_{t} \in \comp^{t}} \vnorm{\beta_1 \ee_1 - \hTT_{t} \yy_{t}}^2.
\end{align}

\begin{remark}
    Let us note that though we consider the Saunders process only in the context of complex-symmetric systems, it is more broadly adaptable to general square matrices $ \AA \in \comp^{\dn \times \dn} $; see \cite{saunders1988two}.
\end{remark}

\subsubsection*{QR decomposition}
In contrast to \cite{choi2013minimal}, to perform the QR decomposition of $\hTT_{t}$, we employ the complex-symmetric Householder reflector \cite[Exercise 10.4]{trefethen1997numerical}. This is a judicious choice to maintain some of the classical properties of MINRES in the Hermitian case such as $ \| \rrt \| = \phi_{t} > 0 $, as well as the singular value approximations using $ \gamma_{t}^{[2]} > 0 $. Doing this, we obtain $ \QQt \in \comp^{(t+1) \times (t+1)} $ and $ \hRRt \in \comp^{(t+1) \times t} $ via
\begin{subequations}
	\label{eq:T_QR_decomp}
	\begin{align}
		\QQ_{t} \hTT_{t} = \hRR_{t} \triangleq 
		\begin{bmatrix}
			\RR_{t}\\
			\zero^{\T}
		\end{bmatrix}, \quad 
		\RR_{t} =
		\begin{bmatrix}
			\gamma_1^{[2]} & \delta_2^{[2]} & \epsilon_3 & \\
			&\gamma_2^{[2]} & \delta_3^{[2]} & \ddots \\
			& & \ddots & \ddots & \epsilon_{t} \\
			& & & \gamma_{t-1}^{[2]}  & \delta_{t}^{[2]} \\
			& & & & \gamma_{t}^{[2]} \\
		\end{bmatrix}&. \label{eq:block_R} \\
		\QQ_{t} \triangleq \prod_{i=1}^{t}\QQ_{i,i+1}, \quad \QQ_{i,i+1} \triangleq 
		\begin{bmatrix}
			\eye_{i-1} & & & \\
			& \bc_{i} & s_{i} &\\
			& s_{i} & -c_{i} & \\
			& & & \eye_{t-i}
		\end{bmatrix}&, \label{eq:block_Q}
	\end{align}
\end{subequations}
where for $ 1 \leq i \leq t $, $ \gamma_{i}^{[2]} \geq 0$, $ \delta_{i}^{[2]} \in \comp $, $ \epsilon_{i} \geq 0$, $ c_{i} \in \comp $, $ 0 \leq s_{i} \leq 1 $, $ \abs{c_{i}}^2 + s_{i}^2 = 1 $, 
\begin{align*}
	c_{i} = \frac{\gamma_{i}}{\gamma_{i}^{[2]}}, \quad s_{i} = \frac{\beta_{i+1}}{\gamma_{i}^{[2]}}, \quad \gamma_{i}^{[2]} = \bc_{i} \gamma_{i} + s_{i} \beta_{i+1} = \sqrt{\abs{\gamma_{i}}^2 + \beta_{i+1}^2},
\end{align*}
and $ \gamma_{i} $ is defined in \cref{alg:MINRES}. In fact, the same series of transformations are also simultaneously applied to $\beta_{1} \ee_1$, i.e., 
\begin{align*}
	\QQ_{t}\beta_1\ee_1 &= \beta_1 \QQ_{k,k+1} \dots \QQ_{2,3}
	\begin{bmatrix}
		\bc_1\\
		s_1\\
		\bm{0}_{k-1}
	\end{bmatrix} \\
	&= \beta_1 \QQ_{k,k+1} \dots \QQ_{3,4}
	\begin{bmatrix}
		\bc_1\\
		s_1 \bc_2\\
		s_1 s_2\\
		\bm{0}_{k-2}
	\end{bmatrix} = \beta_1
	\begin{bmatrix}
		\bc_1\\
		s_1 \bc_2\\
		\vdots\\
		s_1 s_2\dots s_{t-1} \bc_{t}\\
		s_1 s_2\dots s_{t-1} s_{t}
	\end{bmatrix} \triangleq
	\begin{bmatrix}
		\tau_1\\
		\tau_2\\
		\vdots\\
		\tau_{t}\\
		\phi_{t}
	\end{bmatrix} \triangleq \begin{bmatrix}
		\ttt_{t} \\
		\phi_{t}
	\end{bmatrix}.
\end{align*}

\begin{remark}
	\label{rm:real}
	By \cite[Theorem 6.2]{saad2011numerical}, the Lanczos process applied to a Hermitian matrix gives rise to a real symmetric tridiagonal matrix $ \TTt = \VVtH \AA \VVt \in \symm^{t \times t} $. Therefore, the QR decomposition for the preconditioned MINRES with such Hermitian matrices involve real components, e.g., $ c_t = \bc_{t} \in \reals $. This is in contrast to the complex-symmetric case.
\end{remark}

Having introduced these different terms, we can solve \cref{eq:MINRES_sub} by noting that
\begin{align*}
	\min_{\yy_{t} \in \comp^{t}}  \vnorm{\rr_{t}} = \min_{\yy_{t} \in \comp^{t}} \vnorm{\beta_1\ee_1 - \hTT_{t}\yy_{t}} &= \min_{\yy_{t} \in \comp^{t}} \vnorm{\QQ_{t}\beta_1\ee_1 - \QQ_{t} \hTT_{t} \yy_{t}} \\
	& = \min_{\yy_{t} \in \comp^{t}} \left\|{\begin{bmatrix}
			\ttt_{t}\\
			\phi_{t}
		\end{bmatrix} -
		\begin{bmatrix}
			\RR_{t} \\
			\zero^{\T}
		\end{bmatrix}\yy_{t}}\right\|.
\end{align*}
Note that this implies that $ \| \rr_{t} \| = \phi_{t} $. We also trivially have $ \beta_{1} = \phi_{0} = \vnorm{\bb} $.

\subsubsection*{Updates}
Let $ t < g $ and define $ \DD_{t} \in \comp^{m \times t} $ from solving the lower triangular system $ \RR_{t}^{\T} \DD_{t}^{\T} = \VV_{t}^{\H} $, where $ \RR_{t} $ is as in \cref{eq:T_QR_decomp}. Now, letting $ \VV_{t} = [ \VV_{t-1}, \vv_{t}] $, and using the fact that $ \RRt $ is upper-triangular, we get the recursion $ \DD_{t} = [ \DD_{t-1}, \dd_{t}] $ for some vector $ \dd_{t} $. As a result, using $ \RR_{t} \yy_{t} = \ttt_{t} $, one can update the iterate as 
\begin{align*}
	\xx_{t} = \bVV_{t} \yy_{t} = \DD_{t} \RR_{t} \yy_{t} = \DD_{t} \ttt_{t} = \begin{bmatrix}
		\DD_{t-1} & \dd_{t}
	\end{bmatrix} \begin{bmatrix}
		\ttt_{t-1} \\
		\tau_{t}
	\end{bmatrix} = \xx_{t-1} + \tau_{t} \dd_{t},
\end{align*}
where we let $ \xx_{0} = \zero $. Furthermore, applying $\bVV_{t} = \DD_{t} \RR_{t}$ and the upper-triangular form of $\RR_{t}$ in \eqref{eq:T_QR_decomp}, we obtain the following relationship for $ \vv_{t} $:
\begin{align*}
	\bvv_{t} = \epsilon_{t} \dd_{t-2} + \delta^{[2]}_{t} \dd_{t-1} + \gamma^{[2]}_{t} \dd_{t}.
\end{align*}

\section{Preconditioned MINRES for complex-symmetric syst- ems} \label{app:pMINRES_CS} Motivated by \cref{sec:pMINRES_H}, with complex-symmetric $ \AA \in \symm^{m \times m} $, and $ \MM $ given as $ \MM  = \SS \SSH $ for some sub-preconditioner matrix $ \SS \in \comp^{\dn \times m} $, we again see that 
\begin{align*}
	\xxt = \argmin_{\xx \in \St{\bMM\AA,\bMM\bb}} \vnorm{\bb - \AA \xx}_{\bMM}^2 = \argmin_{\xx \in \bSS \St{\SST\AA\SS,\SST\bb}} \vnorm{\SST\left(\bb - \AA \xx\right)}^{2}.
\end{align*}
This allows us to consider \cref{eq:preq_MINRES} with
\begin{align}
	\label{eq:def_tA_tb_tr_hr_CS}
	\tAA \triangleq \SST \AA \SS \in \symm^{m \times m}, \quad \text{and} \quad \tbb \triangleq \SST \bb \in \comp^{m}.
\end{align}
Using \cref{eq:def_x_r}, for the residual of the system in \cref{eq:pMINRES}, $\trrt = \tbb - \tAA \txxt$, we have 
\begin{align}
	\label{eq:trrt_rrt_CS}
	\trrt = \SST \rrt \in \comp^{m}, \quad \text{where} \quad  \rrt = \bb - \AA \xx. 
\end{align}

Similar to \cref{sec:pMINRES_H}, we now construct the preconditioned MINRES algorithm for complex-symmetric systems. For this, note that $\tAA$, defined in \cref{eq:def_tA_tb_tr_hr_CS}, is also itself complex-symmetric. Hence, following \cref{sec:CSMINRES}, the Saunders process \cref{eq:lanczos_CS} yields
\begin{align}
	\label{eq:saunders_process}
	\betatn \tvv_{t+1} = \tAA \cj{\tvvt} - \alphat \tvvt - \betat \tvv_{t-1}.
\end{align}
Denoting 
\begin{align}
	\label{eq:def_z_w_d_CS}
	\betat \tvvt = \SST \zzt, \quad \wwt = \MM \bzzt = \betat \SS \cj{\tvvt},
\end{align}
and using \cref{eq:def_z_w_d_CS,eq:Md_STS}, we obtain
\begin{align*}   
	\SST \zz_{t+1} = \frac{1}{\betat} \SST \AA \MM \bzzt - \frac{\alphat}{\betat} \SST \zzt - \frac{\betat}{\beta_{t-1}} \SST \zz_{t-1}.
\end{align*}
This relation suggests that to guarantee \cref{eq:saunders_process}, we can define the following three-term recurrence on $ \zzt $ 
\begin{align}
	\label{eq:updates_z_w_CS}
	\zztn = \frac{1}{\betat} \AA \MM \bzzt - \frac{\alphat}{\betat} \zzt - \frac{\betat}{\beta_{t-1}} \zz_{t-1} = \frac{1}{\betat} \AA \wwt - \frac{\alphat}{\betat} \zzt - \frac{\betat}{\beta_{t-1}} \zz_{t-1}.
\end{align}
where $ \beta_0 = \beta_1 = \| \tbb \| $.

The main result of this section relies on a few technical lemmas, which we now present. \cref{lemma:Kry_tA_tb_CS}, similar to \cref{lemma:Kry_tA_tb}, gives an alternative characterization of the Krylov sub-space in \cref{eq:pMINRES}.
\begin{lemma}
	\label{lemma:Kry_tA_tb_CS}
	For any $1 \leq t \leq \tg$, we have $\sds{t}{\tAA, \tbb} = \SST \sds{t}{\AA \MM, \bb}$.
\end{lemma}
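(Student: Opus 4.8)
The plan is to mirror the computation in the proof of \cref{lemma:Kry_tA_tb}, replacing the single Krylov bookkeeping by the two-block Saunders decomposition of \cref{eq:saunders}. Since $\AA$ is complex-symmetric, so is $\tAA = \SST\AA\SS$, hence both Saunders subspaces are well defined. First I would expand the right-hand side via the definition applied to the pair $(\AA\MM,\bb)$,
\[
\sds{t}{\AA \MM, \bb} = \kryl{t_1}{(\AA\MM)\overline{(\AA\MM)}, \bb} \oplus \kryl{t_2}{(\AA\MM)\overline{(\AA\MM)}, (\AA\MM)\bbb}, \quad t_1+t_2=t, \quad 0\le t_1-t_2\le 1,
\]
and expand the left-hand side $\sds{t}{\tAA, \tbb}$ in terms of its generators $\tAA\overline{\tAA}$, $\tbb$, and $\tAA\overline{\tbb}$. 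The goal is then to match the two seeds and all their iterates under the linear map $\SST$.

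The engine of the argument is a pair of conjugate identities coming from $\MM=\SS\SSH$: namely $\SS\SSH=\MM$ together with its conjugate $\bSS\SST=\bMM$ (here using $\overline{\SST}=\SSH$ and $\overline{\MM}=\bMM$). These play the role that $\SSH=\SSd\MM$ played in \cref{lemma:Kry_tA_tb}. Using \cref{eq:def_tA_tb_tr_hr_CS} I would first record the four building blocks
\[
\tbb = \SST\bb, \quad \overline{\tbb} = \SSH\bbb, \quad \tAA\overline{\tAA} = \SST\AA\MM\bAA\bSS, \quad \tAA\overline{\tbb} = \SST\AA\MM\bbb,
\]
and then, by induction on $j$ (equivalently, a telescoping that repeatedly collapses the interior factor $\bSS\SST$ into $\bMM$), prove that
\[
(\tAA\overline{\tAA})^j \tbb = \SST\,[(\AA\MM)\overline{(\AA\MM)}]^j\,\bb, \qquad (\tAA\overline{\tAA})^j \tAA\overline{\tbb} = \SST\,[(\AA\MM)\overline{(\AA\MM)}]^j (\AA\MM)\bbb .
\]
The inductive step writes $(\tAA\overline{\tAA})^{j} = \SST\,[(\AA\MM)\overline{(\AA\MM)}]^{j-1}\AA\MM\bAA\bSS$, and then the trailing $\bSS$ meets the leading $\SST$ of the next factor (or of the seed) and collapses to $\bMM=\bSS\SST$, producing exactly one more power of $(\AA\MM)\overline{(\AA\MM)}=\AA\MM\bAA\bMM$.

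With these iterate identities in hand, $\SST$ carries a spanning set of $\kryl{t_1}{(\AA\MM)\overline{(\AA\MM)},\bb}$ onto one of $\kryl{t_1}{\tAA\overline{\tAA},\tbb}$, and likewise for the $t_2$-blocks; taking spans and adding the two pieces gives $\sds{t}{\tAA,\tbb}=\SST\sds{t}{\AA\MM,\bb}$, since $\SST$ distributes over the direct sum as an ordinary sum of images. The main obstacle I anticipate is not conceptual but one of careful bookkeeping of conjugation: the complex-symmetric setting affords no self-adjointness shortcut, so unlike the Hermitian derivation (where conjugates could be folded into an $\MM$-inner product), every bar must be tracked explicitly to ensure $\overline{\SST}=\SSH$ and $\overline{\MM}=\bMM$ are applied at the right places and that the interior-factor collapse $\bSS\SST\to\bMM$ is legitimate at each power.
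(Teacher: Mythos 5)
Your proof is correct and is essentially the paper's own argument: the paper likewise expands $\sds{t}{\tAA,\tbb}$ via \cref{eq:saunders} into the blocks $\kryl{t_1}{\tAA\cj{\tAA},\tbb}$ and $\kryl{t_2}{\tAA\cj{\tAA},\tAA\cj{\tbb}}$ and uses the same interior collapses $\SS\SSH=\MM$ and $\bSS\SST=\bMM$ to identify their spanning vectors with $\SST\left[\AA\MM\bAA\bMM\right]^{j}\bb$ and $\SST\left[\AA\MM\bAA\bMM\right]^{j}\AA\MM\bbb$, respectively, before combining the two pieces. Your explicit induction merely formalizes the ellipses in the paper's spanning-set computation, so the two arguments coincide.
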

\begin{proof}
	By \cref{eq:def_tA_tb_tr_hr_CS,eq:Md_STS,eq:Md_KKH} and the definition of the Saunders subspace in \cref{eq:saunders}, we have
	\begin{align*}
		\sds{t}{\tAA, \tbb} &= \kryl{t_1}{\tAA \cj{\tAA}, \tbb} \oplus \kryl{t_2}{\tAA \cj{\tAA}, \tAA \cj{\tbb}},
	\end{align*}
	where $ t_1 + t_2 = t $, $ 0 \leq t_1 - t_2 \leq 1 $ and
	\begin{align*}
		\kryl{t_1}{\tAA \cj{\tAA}, \tbb} &= \Span\left\{\SST \bb, \SST \AA \MM \bAA \bMM \bb, \ldots, \SST \left[\AA \MM \bAA \bMM\right]^{t_1-1} \bb\right\} \\
		&= \SST \kryl{t_1}{\AA \MM \bAA \bMM, \tbb}.
	\end{align*}
	and
	\begin{align*}
		\kryl{t_2}{\tAA \cj{\tAA}, \tAA \cj{\tbb}} &= \Span\left\{\SST \AA \MM \bbb, \ldots, \SST \left[\AA \MM \bAA \bMM\right]^{t_2-1} \AA \MM \bbb\right\} \\
		&= \SST \kryl{t_2}{\AA \MM \bAA \bMM, \AA \MM \bbb}.
	\end{align*}
	The result follows by combining the latter expressions.
\end{proof}

Using \cref{lemma:Kry_tA_tb_CS}, we can now gain insight into the properties of the vectors $ \zz_{i} $ and $ \ww_{i} $. The proofs of \cref{lemma:Kry_spans_z_w_CS,lemma:alpha_beta_CS,lemma:residuals_CS,lemma:Kry_x_hr_CS} are similar to that of \cref{lemma:Kry_spans_z_w,lemma:alpha_beta,lemma:residuals,lemma:Kry_x_hr}, respectively, and hence is omitted.
\begin{lemma}
	\label{lemma:Kry_spans_z_w_CS}
	For any $1\leq t \leq \tg$, we have
	\begin{align*}
		\Span \{\ww_{1}, \ww_{2}, \ldots, \ww_{t}\} &= \MM \sds{t}{\bAA \bMM, \bbb},\\
		\bPP \PPT \Span \{\zz_{1}, \zz_{2}, \ldots, \zz_{t}\} &= \bPP \PPT \sds{t}{\AA \MM, \bb}, 
	\end{align*}
	where $ \PP $ is as in \cref{eq:def_Md_M_Sd_S}. Furthermore, $ \ww_{\tg+1} = \zero $ and $ \zz_{\tg+1} \in \Null(\MM) $.
\end{lemma}

\cref{lemma:alpha_beta_CS} shows how we can compute $ \alphat $ and $ \betat $ in \cref{eq:updates_z_w_CS}, i.e., how to construct $ \tTTt $ in \cref{eq:tridiagonal_T}. 

\begin{lemma}
	\label{lemma:alpha_beta_CS}
	For $ 1 \leq t \leq \tg $, we can compute $ \alpha_{t} $ and $ \beta_{t} $ in \cref{eq:updates_z_w_CS} as
	\begin{align*}
		\alphat = \frac{1}{\betat^2} \dotprod{\bwwt, \AA \wwt}, \quad \betatn = \sqrt{\dotprod{\bzztn, \wwtn}}.
	\end{align*}
\end{lemma}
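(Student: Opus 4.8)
The plan is to follow the proof of \cref{lemma:alpha_beta} almost verbatim, inserting the conjugations that the Saunders process \cref{eq:saunders_process} introduces. The algebra rests on the bookkeeping identities $ \cj{\SST} = \SSH $ and $ (\SST)^{\H} = \bSS $, the defining relation $ \SS\SSH = \MM $ together with its conjugate $ \bSS\SST = \bMM $, and the observation that $ \bMM $, like $ \MM = \SS\SSH $, is Hermitian and positive semi-definite. Throughout I would use that each $ \betat $ is a positive real (it is the norm of the unnormalized Saunders vector), so that conjugating \cref{eq:def_z_w_d_CS} gives $ \tvvt = \SST\zzt/\betat $ and $ \cj{\tvvt} = \SSH\bzzt/\betat $.

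For $ \alpha_t $, I would start from the Saunders-process expression $ \alpha_t = \dotprod{\tvvt, \tAA\cj{\tvvt}} $, substitute the two displays above together with $ \tAA = \SST\AA\SS $, and collapse the outer factors via $ (\SST\zzt)^{\H} = \zzt^{\H}\bSS $, $ \bSS\SST = \bMM $, and $ \SS\SSH = \MM $:
\begin{align*}
\alpha_t = \frac{1}{\betat^2}\,\zzt^{\H}\,\bSS\SST\,\AA\,\SS\SSH\,\bzzt = \frac{1}{\betat^2}\,\zzt^{\H}\,\bMM\,\AA\,\MM\,\bzzt.
\end{align*}
It then remains to read off $ \MM\bzzt = \wwt $ and $ \bMM\zzt = \cj{\MM\bzzt} = \bwwt $; since $ \bMM $ is Hermitian, $ \zzt^{\H}\bMM = (\bMM\zzt)^{\H} = \bwwt^{\H} $, which yields $ \alpha_t = \dotprod{\bwwt, \AA\wwt}/\betat^2 $, as claimed.

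For $ \betatn $ with $ 1\le t\le\tg-1 $, I would use the unit-norm normalization $ \betatn = \vnorm{\betatn\tvvtn} $ and the identity $ \betatn\tvvtn = \SST\zztn $ to write
\begin{align*}
\betatn^2 = \dotprod{\SST\zztn, \SST\zztn} = \zztn^{\H}\,(\SST)^{\H}\SST\,\zztn = \zztn^{\H}\,\bMM\,\zztn = \dotprod{\zztn, \bwwtn},
\end{align*}
using $ \bMM\zztn = \bwwtn $ at the last step. This quantity is real and nonnegative because $ \bMM\succeq\zero $, so it coincides with its conjugate $ \dotprod{\bzztn, \wwtn} $; taking the (real, nonnegative) square root gives the claimed $ \betatn = \sqrt{\dotprod{\bzztn, \wwtn}} $. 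The boundary case $ t=\tg $ holds trivially, since $ \ww_{\tg+1} = \zero $ and $ \beta_{\tg+1} = 0 $ by \cref{lemma:Kry_spans_z_w_CS}.

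The main obstacle is purely one of careful bookkeeping: the complex-symmetric setting forces the conjugates $ \cj{\tvvt} $, $ \bzzt $, $ \bwwt $, $ \bMM $ into asymmetric positions, and the chief subtlety is confirming that $ \betatn^2 $ emerges as a genuinely real nonnegative scalar. This is precisely what legitimizes writing $ \betatn $ as a real square root and lets the two conjugate-paired inner-product forms, $ \dotprod{\bzztn,\wwtn} $ and $ \dotprod{\zztn,\bwwtn} $, be used interchangeably.
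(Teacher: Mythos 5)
Your proposal is correct and follows essentially the same route as the paper's proof: substituting $\tvvt = \SST\zzt/\betat$, $\cj{\tvvt} = \SSH\bzzt/\betat$, and $\tAA = \SST\AA\SS$ into $\alphat = \dotprod{\tvvt, \tAA\cj{\tvvt}}$, collapsing via $\bSS\SST = \bMM$ and $\SS\SSH = \MM$, and handling $t = \tg$ through $\beta_{\tg+1} = 0$ and $\ww_{\tg+1} = \zero$. The only cosmetic difference is in the $\betatn$ step, where the paper conjugates the Saunders vector first (using $\vnorm{\tvvtn} = \vnorm{\bvvtn}$) to land directly on $\dotprod{\bzztn, \MM\bzztn}$, while you compute $\dotprod{\zztn, \bwwtn}$ and pass to its conjugate via realness of the Hermitian PSD quadratic form — an equivalent argument.
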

%
Since $ \MM \succeq \zero $ and $\dotprod{\bzzt, \wwt} = \dotprod{\bzzt, \MM \bzzt}$, $ \betat $ is well-defined for all $ 1 \leq t \leq \tg $, where $ \beta_{1} = \| \tbb \| = \sqrt{\dotprod{\bzz_{1}, \ww_{1}}} $. 

Recall the update direction in MINRES applied to \cref{eq:pMINRES} with the complex-symmetric matrix is given by 
\begin{align}
	\label{eq:updates_v_d_CS}
	\tdd_{t} = \frac{1}{\gamma^{[2]}_{t}}\left( \cj{\tvvt} - \epsilon_{t} \tdd_{t-2} - \delta^{[2]}_{t} \tdd_{t-1} \right), \quad 1 \leq t \leq \tg,
\end{align}
where $ \tdd_{0} = \tdd_{-1} = \zero $. 
Let $ \ddt $ be a vector such that $ \ddt = \SS \tddt $ and set $ \dd_{0} = \dd_{-1} = \bm{0} $. It follows that 
\begin{align*}
	\ddt = \SS \tddt &= \frac{1}{\gamma^{[2]}_{t}}\left( \SS \cj{\tvvt} - \epsilon_{t} \SS \tdd_{t-2} - \delta^{[2]}_{t} \SS \tdd_{t-1} \right) \\
	&= \frac{1}{\gamma^{[2]}_{t}}\left( \SS \cj{\tvvt} - \epsilon_{t} \dd_{t-2} - \delta^{[2]}_{t} \dd_{t-1} \right).
\end{align*}
From \cref{eq:def_z_w_d_CS,eq:Md_STS}, we obtain the following three-term recurrence relation 
\begin{align}
	\label{eq:update_x_d_CS}
	\dd_{t} = \frac{1}{\gamma^{[2]}_{t} } \left(\frac{1}{\betat} \wwt - \epsilon_{t} \dd_{t-2} - \delta^{[2]}_{t} \dd_{t-1}\right), \quad 1 \leq t \leq \tg.
\end{align}
Clearly, \cref{eq:update_x_d_CS} implies \cref{eq:updates_v_d_CS}. To see this, note that \cref{lemma:Kry_tA_tb_CS} implies $ \cj{\tvvt} \in \rg{\SSH}$, and as a result $ \tddt \in \rg{\SSH}$ by \cref{eq:updates_v_d_CS}. Hence, the identity $ \SSd \SS \SSH = \SSH $ implies that $\SSd \ddt = \SSd \SS \tddt = \tddt $ and $\cj{\tvvt} = \SSd \wwt / \betat $ by \cref{eq:def_z_w_d_CS}. Now, multiplying both sides of \cref{eq:update_x_d_CS} by $ \SSd $ gives \cref{eq:updates_v_d_CS}. 
Also, by \cref{lemma:Kry_spans_z_w_CS}, this construction implies that $ \ddt \in \Span\{\ww_1, \ww_2, \ldots, \ww_{t} \} = \MM \sds{t}{\bAA \bMM, \bbb} $ for $ 1 \leq t \leq \tg $. 
%
%
Finally, from \cref{eq:def_x_r,eq:update_x_d_CS}, it follows that the update is given by
\begin{align*}
	\xxtn = \SS \txxtn = \SS \txxt + \tau \SS \tddt = \xxt + \tau \ddt.
\end{align*}
Initializing with letting $ \xx_0 = \bm{0} $, gives $ \xxt \in \MM \sds{t}{\bAA \bMM, \bbb} $. 

We also have the following recurrence relation on quantities connected to the residual in this complex-symmetric setting, similar to \cref{lemma:residuals} for the case of Hermitian systems.
\begin{lemma}
	\label{lemma:residuals_CS}
	For any $1\leq t \leq \tg$, define $ \hrrt = \bSS \trrt \in \comp^{\dn} $, where $\trrt = \tbb - \tAA \txxt$, then
	\begin{subequations}		
		\begin{align}
			\hrrt &= s_{t}^2 \hrrtp - \frac{\phi_{t} \bc_{t}}{\betatn} \bwwtn  \label{eq:hr_CS}, \\
			\bPP \PPT \rrt &= \bPP \PPT \left(s_t^2 \rr_{t-1} - \frac{\phi_{t} \bc_t}{\beta_{t+1}} \zz_{t+1}\right), \label{eq:PPTr_CS}
		\end{align}
	\end{subequations}
	where $ \PP $ and $ \ww_{t} $ are, respectively, defined in \cref{eq:def_Md_M_Sd_S,eq:def_z_w_d_CS}.
\end{lemma}

Next, we show that $ \xxt $ and the residuals $ \hrrt $ from \cref{alg:pMINRES} for complex-symmetric systems  belong to a certain Krylov subspace; see \cref{lemma:Kry_x_hr} for the corresponding result for the Hermitian case.
\begin{lemma}
	\label{lemma:Kry_x_hr_CS}
	With \cref{eq:def_tA_tb_tr_hr_CS,eq:def_x_r} and $ \xx_{0} = \zero $, we have for any $ 1 \leq t \leq \tg $
	\begin{align*}
		\xxt \in \MM \sds{t}{\bAA \bMM, \bbb},
	\end{align*}
	and $\Span \{\hrr_{0}, \hrr_{1}, \ldots, \hrr_{t-1}\} = \bMM \sds{t}{\AA \MM, \bb}$. 
\end{lemma}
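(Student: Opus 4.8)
The plan is to follow the structure of the proof of \cref{lemma:Kry_x_hr} for the Hermitian case, adapting each step to account for the conjugations appearing in the complex-symmetric recurrences. The first claim, $\xxt \in \MM \sds{t}{\bAA \bMM, \bbb}$, has in fact already been obtained in the derivation preceding the lemma: the update direction obeys $\ddt \in \Span\{\ww_1, \ldots, \ww_t\}$, which equals $\MM \sds{t}{\bAA \bMM, \bbb}$ by \cref{lemma:Kry_spans_z_w_CS}, and since the iterate is formed by the nested recursion $\xxtn = \xxt + \taut \ddt$ with $\tilde{\xx}_{0} = \xx_0 = \zero$, it remains in this subspace. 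I would simply cite \cref{lemma:Kry_spans_z_w_CS} together with this recursion to close the first part.

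For the residual claim, I would first pin down the initial vector: since $\tilde{\xx}_{0} = \zero$ we have $\trr_0 = \tbb = \SST \bb$, so $\hrr_0 = \bSS \trr_0 = \bSS \SST \bb = \bMM \bb = \bww_1$, where I use $\bMM = \bSS \SST$ together with $\ww_1 = \MM \bbb$ (so that $\bww_1 = \bMM \bb$). Next I would invoke the recurrence \eqref{eq:hr_CS}, namely $\hrrt = s_t^2 \hrrtp - (\phi_t \bc_t / \betatn)\, \bwwtn$, and argue by induction that each $\hrrt$ is a linear combination of $\bww_1, \ldots, \bww_{t+1}$; together with $\hrr_0 = \bww_1$ this yields the inclusion $\Span\{\hrr_0, \ldots, \hrr_{t-1}\} \subseteq \Span\{\bww_1, \ldots, \bww_t\}$.

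The next ingredient is to identify the target subspace: conjugating \cref{lemma:Kry_spans_z_w_CS} gives $\Span\{\bww_1, \ldots, \bww_t\} = \overline{\MM \sds{t}{\bAA \bMM, \bbb}} = \bMM\, \overline{\sds{t}{\bAA \bMM, \bbb}}$, and conjugating each Krylov factor in the definition \eqref{eq:saunders} shows $\overline{\sds{t}{\bAA \bMM, \bbb}} = \sds{t}{\AA \MM, \bb}$. Hence $\Span\{\bww_1, \ldots, \bww_t\} = \bMM \sds{t}{\AA \MM, \bb}$, which is precisely the right-hand side of the asserted identity, so the inclusion above already reads $\Span\{\hrr_0, \ldots, \hrr_{t-1}\} \subseteq \bMM \sds{t}{\AA \MM, \bb}$.

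The main obstacle is upgrading this inclusion to an equality of spans. For this I would note that $\{\bww_1, \ldots, \bww_t\}$ is a basis of the $t$-dimensional subspace $\bMM \sds{t}{\AA \MM, \bb}$, so it suffices to show the $t$ vectors $\hrr_0, \ldots, \hrr_{t-1}$ are linearly independent; equivalently, the triangular change of basis from the $\bww_i$ to the $\hrr_i$ encoded in \eqref{eq:hr_CS} must have nonzero diagonal, i.e.\ the coefficient of $\bww_{i+1}$ in $\hrr_i$, which is $-\phi_i \bc_i / \beta_{i+1}$ for $1 \le i \le t-1$ (with the base case $\hrr_0 = \bww_1$), is nonzero. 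Since $\phi_i = \vnorm{\rr_i} > 0$ strictly before termination, i.e.\ for $i \le \tg - 1$, the only delicate point is ruling out $\bc_i = 0$ at an interior step; I expect this to be handled exactly as in the Hermitian analysis, and I would either make this nondegeneracy explicit or fall back on a direct dimension count to complete the argument.
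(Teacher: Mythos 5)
Your proof takes essentially the same route as the paper's own proof, which is a two-line citation: \cref{lemma:Kry_spans_z_w_CS} plus the update recursion $\xxtn = \xxt + \taut\ddt$ for the first claim, and the base case together with the recurrence \cref{eq:hr_CS} for the second. Everything you fill in is correct, including the conjugation identity $\cjl{\sds{t}{\bAA\bMM,\bbb}} = \sds{t}{\AA\MM,\bb}$ (conjugating each Krylov factor in \cref{eq:saunders}), which the paper uses only implicitly.

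Two points where you are actually more careful than the paper deserve mention. First, your base case $\hrr_0 = \bSS\SST\bb = \bMM\bb = \bww_1$ is the right one in the complex-symmetric setting: the paper's proof says ``noting that $\hrr_0 = \ww_1$'', a leftover of the Hermitian case that matches the $\bww$-recurrence \cref{eq:hr_CS} only after conjugation. Second, the nondegeneracy issue you flag at the end is genuine, and --- contrary to your expectation --- it is \emph{not} handled in the paper's Hermitian analysis either: \cref{lemma:Kry_x_hr} is proved by the same terse citation. MINRES can stagnate at an interior step with $c_i = 0$ (i.e.\ $\gamma_i = 0$ while $\beta_{i+1} > 0$): already for $\AA = \left[\begin{smallmatrix} 0 & 1 \\ 1 & 0 \end{smallmatrix}\right]$, $\bb = \ee_1$, $\MM = \eye$, one gets $c_1 = 0$, $s_1 = 1$, hence $\hrr_1 = \hrr_0 = \bb$ and $\Span\{\hrr_0, \hrr_1\} \subsetneq \kryl{2}{\AA,\bb}$ with $t = 2 \leq \tg$, so the asserted span \emph{equality} fails while the inclusion survives; moreover, once $\bc_i = 0$ occurs, the recurrence multiplies the $\bww_{i+1}$-coefficient of every subsequent $\hrr_j$ by $s_j^2$ starting from zero, so the missing direction is never recovered. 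Your triangular change-of-basis argument therefore proves the equality exactly under the explicit hypothesis $\bc_i \neq 0$ for $1 \leq i \leq t-1$ (your remaining ingredients are fine: $\phi_i > 0$ for $i \leq \tg - 1$, and the linear independence of $\bww_1,\dots,\bww_t$, which follows since $\cj{\tvv_i} \in \rg{\SSH}$ and $\SS$ is injective on $\rg{\SSH}$), and either that hypothesis or a retreat to the inclusion should be made explicit. Note that the inclusion is all that the downstream application in \cref{thm:lifting_CS} actually uses, namely $\xxtg, \cj{\hrrtg} \in \rg{\MM}$, so the main results are unaffected.
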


\begin{remark}
	\label{rm:all_vectors_subspaces_CS}
	Similar to \cref{rm:all_vectors_subspaces} for the Hermitian systems, \cref{eq:PPTr_CS} implies that in the complex-symmetric settings, we can define a new vector $ \crrt $ by
	\begin{align}
		\label{eq:cr_CS}
		\crrt = s_t^2 \crrtp - \frac{\phi_{t} \bc_t}{\beta_{t+1}} \zz_{t+1},
	\end{align}
	where $ \crr_{-1} = \zero $, which then implies $ \bPP \PPT \crrt = \bPP \PPT \rrt $, i.e., computing $ \bPP \PPT \rrt $ can be done without performing an extra matrix-vector product $\AA \xxt$. Similar to \cref{rm:all_vectors_subspaces}, this fact as well as  \cref{eq:updates_z_w_CS,lemma:Kry_spans_z_w_CS}, along with $\zz_{0} = \zero$ and $ \zz_1 = \bb $ give $\zzt \in \sds{t}{\AA \MM, \bb}$, which in turn implies $ \crrtp \in \sds{t}{\AA \MM, \bb} $. 
	Also, from \cref{lemma:Kry_x_hr_CS,lemma:Kry_spans_z_w_CS,eq:update_x_d_CS}, it follows that $ \wwt, \ddt, \xxt \in \MM \sds{t}{\bAA \bMM, \bbb} $ and $ \hrrtp \in \bMM \sds{t}{\AA \MM, \bb}$.
	Hence, all the vectors $ \wwt, \ddt, \xxt, \hrrtp, \zzt, \crrtp $ are invariant for a given $ \MM $ to the particular choice of $ \SS $ that gives $ \MM = \SS\SSH $. 
\end{remark}

	\bibliographystyle{plain}
	\bibliography{biblio}

\end{document}